\theoremstyle{plain}
\newtheorem{theorem}{Theorem}[section]
\newtheorem*{theorem*}{Theorem}
\newtheorem{lemma}[theorem]{Lemma}
\newtheorem{proposition}[theorem]{Proposition}
\newtheorem{corollary}[theorem]{Corollary}
\newtheorem*{thompson}{Thompson's~Theorem}
\newtheorem*{navarro-tiep}{Navarro-Tiep's~Theorem}
\theoremstyle{definition}
\theoremstyle{remark}
\numberwithin{equation}{theorem}
\newcommand{\PGL}{{\mathrm {PGL}}}
\newcommand{\SL}{{\mathrm {SL}}}
\newcommand{\PSL}{{\mathrm {PSL}}}
\newcommand{\PSU}{{\mathrm {PSU}}}
\newcommand{\Ker}{\operatorname{Ker}}
\newcommand{\Aut}{{\mathrm {Aut}}}
\newcommand{\Irr}{{\mathrm {Irr}}}
\newcommand{\Stab}{{\mathrm {Stab}}}
\newcommand{\RR}{{\mathbb R}}
\newcommand{\QQ}{{\mathbb Q}}
\newcommand{\FF}{{\mathbb F}}
\newcommand{\acd}{\mathrm{acd}}
\newcommand{\odd}{\mathrm{odd}}
\newcommand{\bC}{{\mathbf{C}}}
\newcommand{\bO}{{\mathbf{O}}}
\newcommand{\bN}{{\mathbf{N}}}
\newcommand{\bZ}{{\mathbf{Z}}}
\newcommand{\Al}{\textup{\textsf{A}}}
\newcommand{\Sy}{\textup{\textsf{S}}}
\begin{document}

\title[Characters of $p'$-degree and Thompson's character degree
theorem] {Characters of $p'$-degree\\ and Thompson's character
degree theorem}

\thanks{The author gratefully acknowledges the support of the NSA Young
  Investigator Grant \#H98230-14-1-0293 and a Faculty Scholarship
  Award from Buchtel College of Arts and Sciences, The University
  of Akron. He is also grateful to Alexander Moret\'{o} for several helpful discussions}

\author{Nguyen Ngoc Hung}
\address{Department of Mathematics, The University of Akron, Akron,
Ohio 44325, USA} \email{hungnguyen@uakron.edu}

\subjclass[2010]{Primary 20C15, 20D10, 20D05}

\keywords{finite groups, character degrees, Thompson's theorem,
normal $p$-complement, $p$-nilpotency, solvability}

\date{June 21, 2015}

\begin{abstract}
A classical theorem of John Thompson on character degrees asserts
that if the degree of every ordinary irreducible character of a
finite group $G$ is 1 or divisible by a prime $p$, then $G$ has a
normal $p$-complement. We obtain a significant improvement of this
result by considering the average of $p'$-degrees of irreducible
characters. We also consider fields of character values and prove
several improvements of earlier related results.
\end{abstract}

\maketitle


\section{Introduction}

One of the classical results on character degrees is the celebrated
theorem of J.\,G.~Thompson, which asserts that if the degree of
every ordinary irreducible character of a finite group $G$ is 1 or
divisible by a prime $p$, then $G$ has a normal $p$-complement, see
\cite{Thompson} or \cite[Corollary~12.2]{Isaacs1}. Let
$\acd_{p'}(G)$ denote the average of $p'$-degrees of irreducible
characters of $G$. Then this result can be reformulated as follows:

\begin{thompson} Let $G$ be a finite group. If $\acd_{p'}(G)=1$ then $G$ has a normal
$p$-complement.\end{thompson}

In this paper, we significantly improve Thompson's theorem in the
point of view of $\acd_{p'}$ and investigate further the relation
between characters of $p'$-degree and $p$-nilpotency.

\begin{theorem}\label{theorem-main-1}
Let $p$ be an odd prime and $G$ a finite group. We have
\begin{enumerate}
\item[(i)] if $\acd_{2'}(G)<3/2$ then $G$ has a normal
$2$-complement, and
\item[(ii)] if $\acd_{p'}(G)<4/3$ then $G$ has a normal
$p$-complement.
\end{enumerate}
\end{theorem}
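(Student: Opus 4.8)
The plan is to induct on $|G|$. Let $G$ be a counterexample of least order to (i) or (ii), and write $\beta$ for the relevant bound. Since $G$ has no normal $p$-complement, Thompson's Theorem forces $\acd_{p'}(G)>1$, so $G$ has at least one nonlinear irreducible character of $p'$-degree. Put $l=|G:G'|$, the number of linear characters, and let $n$ be the number of nonlinear characters of $p'$-degree. In case (ii) each of the latter contributes at least $2$ to $\sum_{\chi\in\Irr_{p'}(G)}\chi(1)$, and in case (i)---where $p=2$---at least $3$, since an odd degree exceeding $1$ is at least $3$. Combining with $\sum_{\chi\in\Irr_{p'}(G)}\chi(1)<\beta(l+n)$ and rearranging gives $n<l/2$ in case (ii) and $n<l/3$ in case (i); in particular $n\ge 1$, so $G'\neq 1$ and linear characters dominate.

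Next come the structural reductions on the minimal counterexample. A standard move---replacing $G$ by $G/O_{p'}(G)$, which again satisfies the hypothesis and has no normal $p$-complement, since a normal $p$-complement of $G/O_{p'}(G)$ pulls back to one of $G$ (its preimage has $p'$-order and $p$-power index)---lets us assume $O_{p'}(G)=1$. After further reductions one is left with two possibilities: $G$ has a non-abelian minimal normal subgroup $M=S_1\times\cdots\times S_t$ with the $S_i$ isomorphic non-abelian simple groups (after further reduction, $M$ the unique minimal normal subgroup, so $C_G(M)=1$); or, as $O_{p'}(G)=1$, every minimal normal subgroup of $G$ is an elementary abelian $p$-group, whence $O_p(G)\neq 1$.

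In the non-abelian case the estimate on the socle is easy and the work lies in transporting it up to $G$. A non-abelian simple group $S$ has $1_S$ as its only linear character, every nonlinear character of $S$ is faithful and hence of degree $\ge 3$ (simple groups admit no faithful representation of degree $\le 2$), and $S$ has some nonlinear character of $p'$-degree---clear if $p\nmid|S|$, and forced by Thompson's Theorem applied to $S$ when $p\mid|S|$. Writing $m\ge 1$ for their number and $d_1,\dots,d_m\ge 3$ for their degrees, $\acd_{p'}(S)=\frac{1+\sum_i d_i}{1+m}\ge 3-\frac{2}{1+m}\ge 2$, and since $\acd_{p'}$ is multiplicative over direct products, $\acd_{p'}(M)\ge 2$. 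To finish I would use that every nontrivial irreducible character of $M$ has degree $\ge 3$ (each tensor component is trivial or of degree $\ge 3$): partition $\Irr_{p'}(G)$ into the characters over $1_M$, which form $\Irr_{p'}(G/M)$ and supply all $l$ linear characters of $G$, and those over a nontrivial $M$-character, which all have degree $\ge 3$; then $C_G(M)=1$ (so $G/M\hookrightarrow\Out(M)$, which is solvable-by-$\Sy_t$ by Schreier) together with $n<l/2$ (resp.\ $n<l/3$) yields a contradiction, with a CFSG-assisted check likely needed for a few residual configurations.

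The hard case is the remaining one, where $O_p(G)\neq 1$: the socle now carries no large-degree characters, and the extremal examples $\Al_4$ (for $p=2$) and $\Sy_3$ (for odd $p$) live exactly here---which is why the inequality must be strict. I expect this to be the principal obstacle: one must argue quantitatively, plausibly via a Fong-type analysis of $\Irr_{p'}(G)$ relative to $O_p(G)$, the character theory of $p$-solvable groups, and the inequality $n<l/2$ (resp.\ $n<l/3$), to show that $\acd_{p'}(G)<\beta$ cannot coexist with the absence of a normal $p$-complement.
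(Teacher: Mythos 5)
Your opening reductions (minimal counterexample, the count $n<l/3$ for $p=2$ and $n<l/2$ for $p$ odd) are fine, but both of the cases you isolate are left open, and they are exactly where the content of the theorem lies. In the nonabelian-socle case, the estimate $\acd_{p'}(M)\geq 2$ does not transfer to $G$ in any direct way: to contradict $n<l/2$ you must produce, for (roughly) every linear character of $G$, a distinct nonlinear irreducible character of $G$ of $p'$-degree, and nothing in your sketch does this. A priori it could happen that every nontrivial character of $M$ of $p'$-degree has stabilizer of index divisible by $p$, or fails to extend to its stabilizer, in which case $\Irr_{p'}(G\mid M)$ is of no use for the count; and the embedding $G/M\hookrightarrow\Out(M)$ (Schreier) gives no control of $l=|G:G'|$ against the number of nonlinear $p'$-degree characters of $G/M$. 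What is needed is precisely the paper's Theorem~\ref{theorem-psi-p-odd} (built on the CFSG-based orbit theorem of Navarro--Tiep): a character $\psi\in\Irr(M)$ of $p'$-degree at least $5$ or $7$, with $p'$-index stabilizer $T$, extendible to $T$, which via Gallagher and Clifford yields $n_1(G)\leq n_a(G)|G:T|$ with $a=\psi(1)|G:T|$ coprime to $p$ (Proposition~\ref{proposition-n1-n2}); this is not ``a CFSG-assisted check for a few residual configurations'' but the entire case (and in the paper it is run with much weaker bounds to prove solvability, Theorem~\ref{theorem-main-2}, which also requires a separate analysis of central products with $\SL(2,5)$, $\SL(2,7)$, $3\cdot\Al_6$). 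A further unjustified step is your reduction modulo $O_{p'}(G)$ (and to $C_G(M)=1$): $\acd_{p'}$ does not automatically descend to quotients, since linear characters of $G$ need not contain $O_{p'}(G)$ in their kernels; the paper only ever passes to quotients $G/N$ with $N\leq G'$, where $n_1(G)=n_1(G/N)$ and the discarded characters all have degree at least $2$, which exceeds the bound, so the average can only drop.

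For the second case ($O_p(G)\neq 1$) you concede you have no argument, and this is indeed the other essential gap. The paper closes it not by a Fong-type analysis but as follows: having first established solvability, it takes a minimal normal $N\leq G'$ (elementary abelian), gets a normal $p$-complement $H/N$ of $G/N$ by induction, and uses Schur--Zassenhaus plus a Frattini argument on a Hall $p'$-subgroup $H_1$ of $H$, together with the Frattini-subgroup and centrality dichotomies, to reduce to the situation where $G$ is a split extension of $N$ and no nonprincipal character of $N$ is $G$-invariant. The decisive quantitative input is then Lemma~\ref{lemma-key-normal-p-complement}: picking representatives of the $p'$-size orbits of $G$ on $\Irr(N)$, extending them to their inertia groups $I_i$ (possible because the extension splits), and applying Gallagher, Clifford and the counting Lemma~\ref{lemma-compare}, one gets $\acd_{p'}(G)\geq 2|G:I_1|/(|G:I_1|+1)$, which is $\geq 3/2$ when $p=2$ (as $|G:I_1|\geq 3$) and $\geq 4/3$ when $p$ is odd (as $|G:I_1|\geq 2$); this is exactly where the constants $3/2$ and $4/3$ come from. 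Without an analogue of this lemma and of the orbit/extendibility theorem above, your outline does not yet constitute a proof.
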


\noindent We emphasize that, in contrast to Thompson's theorem where
it is required that $G$ has no nontrivial character degrees coprime
to $p$ at all, in Theorem~\ref{theorem-main-1} we allow $G$ to have
nontrivial character degrees coprime to $p$, and we still can
conclude the $p$-nilpotency of $G$ as long as the number of linear
characters of $G$ is large enough.

A deep part in the proof of Theorem~\ref{theorem-main-1} is to prove
the solvability of the groups in consideration. In fact, we obtain
the following.

\begin{theorem}\label{theorem-main-2}
Let $p>5$ be a prime and $G$ a finite group. If one of the following
happens
\begin{enumerate}
\item[(i)] $\acd_{2'}(G)<3$,
\item[(ii)] $\acd_{3'}(G)<3$,
\item[(iii)] $\acd_{5'}(G)<11/4$,
\item[(iv)] $\acd_{p'}(G)<16/5$,
\end{enumerate} then $G$ is solvable.
\end{theorem}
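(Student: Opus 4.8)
The plan is to argue by minimal counterexample. Suppose $G$ is a non-solvable group of least order with $\acd_{p'}(G)<c_p$, where $c_2=c_3=3$, $c_5=11/4$, and $c_p=16/5$ for $p>5$. The basic tool is the monotonicity of $\acd_{p'}$ under quotients: if $1\ne N\trianglelefteq G$ then $\acd_{p'}(G/N)\le\acd_{p'}(G)$. One proves this by Clifford theory — partition $\Irr(G)$ into the fibres $\Irr(G\mid\theta)$ over representatives $\theta$ of the $G$-orbits on $\Irr(N)$, replace each triple $(I_G(\theta),N,\theta)$ by an isomorphic character triple in which $\theta$ is linear and central, and check that any fibre consisting of $p'$-degree characters has average degree at least $\acd_{p'}(G/N)$, the fibre over $1_N$ being precisely $\Irr_{p'}(G/N)$. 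Granting this, the usual reductions apply. If $G$ had two distinct minimal normal subgroups $N_1,N_2$, then $N_1\cap N_2=1$, so $G$ embeds in $G/N_1\times G/N_2$; both factors have $\acd_{p'}<c_p$ and smaller order, hence are solvable by minimality, forcing $G$ solvable, a contradiction. If the unique minimal normal subgroup $N$ were abelian, then $N$ is solvable and $G/N$ is solvable by the same argument, again a contradiction. Therefore $G$ has a unique minimal normal subgroup $N\cong S^k$ with $S$ non-abelian simple and $k\ge1$, $\bC_G(N)=1$ (otherwise $\bC_G(N)$ would contain $N$, making $N$ abelian), and so $G\le\Aut(N)=\Aut(S)\wr\Sy_k$.

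It remains to prove that $\acd_{p'}(G)\ge c_p$ for every such $G$, and here the classification of finite simple groups enters. First I would reduce to $k=1$. When $k\ge2$, every non-principal $\psi\in\Irr(N)$ has degree a product of degrees of $S$ with at least one non-trivial factor, so $\psi(1)\ge d(S):=\min\{\theta(1):1_S\ne\theta\in\Irr(S)\}\ge2$; every irreducible character of $G$ lying over such a $\psi$ has degree a positive multiple of $\psi(1)$, and in fact at least $k\cdot\psi(1)\ge 2d(S)\ge4$ once one uses that $G$ permutes the $k$ factors transitively, with the degree much larger when two or more tensor factors of $\psi$ are non-trivial. Weighing a supply of such large $p'$-degree characters — taking, e.g., $\psi$ built from the Steinberg character when $S$ is of Lie type away from the defining characteristic, and from its analogues otherwise — against the linear characters, whose number is at most $|G/N|\le|\Out(S)|^k\,k!$ since $N$ is perfect, forces $\acd_{p'}(G)>c_p$. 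Hence we may assume $k=1$, i.e. $G$ is almost simple with socle $S$.

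For $G$ almost simple with socle $S$ one runs through the simple groups. For $S$ of Lie type in defining characteristic $r$: when $p\ne r$, the Steinberg character has $p'$-degree (a power of $r$) and is $\Aut(S)$-invariant, and together with the semisimple characters and a controlled family of unipotent characters — all either $\Aut(S)$-stable or inducing predictably to $G$ — they supply enough $p'$-degree characters of $G$, of the right sizes, to push the average up to $c_p$; when $p=r$, the $p'$-degree characters have known degrees and one argues in the same spirit. For $S=\Al_n$ with $n\ge7$, the hook-length formula yields abundantly many small-degree characters of $p'$-degree; the sporadic and Tits groups and the small cases such as $\Al_5$, $\Al_6$, $\PSL_2(7)$, $\PSL_2(8)$, $\PSL_3(3)$ together with their automorphic overgroups are handled by inspection of character tables. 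The group $\Al_5$ is the extremal configuration: $\acd_{2'}(\Al_5)=3=\acd_{2'}(\Sy_5)$, $\acd_{3'}(\Al_5)=10/3$, $\acd_{5'}(\Al_5)=11/4$, and $\acd_{p'}(\Al_5)=16/5$ for $p>5$; this is precisely why the inequalities in the theorem must be strict and why (iv) requires $p>5$ — otherwise $p\mid|\Al_5|$ forces $\acd_{p'}(\Al_5)<16/5$ and $\Al_5$ itself becomes a counterexample.

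The main obstacle is this last step: a uniform lower bound close to $3$ for $\acd_{p'}$ over all almost simple groups. It requires fairly precise information about the small-degree irreducible characters of simple groups of Lie type — for $p$ both equal to and different from the defining characteristic — and about their extendibility to the automorphism group, together with a uniformly available stock of large-degree $p'$-characters and a careful count ensuring that a possibly large number of moderate-degree characters cannot pull the average back below $c_p$. Since the thresholds are attained at $\Al_5$, the estimates for the smallest simple groups carry essentially no slack and must be exact.
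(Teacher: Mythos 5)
Your proof rests on the monotonicity claim $\acd_{p'}(G/N)\le\acd_{p'}(G)$ for all $1\ne N\trianglelefteq G$, and this claim is false. Take $G=\SL(2,5)$, $N=\bZ(G)$, $p=3$: the degrees of $G$ are $1,2,2,3,3,4,4,5,6$, so $\acd_{3'}(G)=(1+2+2+4+4+5)/6=3$, while $\acd_{3'}(G/N)=\acd_{3'}(\Al_5)=(1+4+5)/3=10/3>3$. Your fibrewise justification breaks exactly here: the fibre over the faithful character of $N$ consists of the characters of degrees $2,2,4,6$, and its $3'$-part $\{2,2,4\}$ has average $8/3<10/3$; a character triple isomorphism compares such a fibre with characters of the inertia group over a central linear character, not with $\Irr_{p'}(G/N)$, so there is no reason a fibre should average at least $\acd_{p'}(G/N)$. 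With the monotonicity gone, your two reductions (two distinct minimal normal subgroups; abelian unique minimal normal subgroup) collapse, and with them the reduction to almost simple groups. This is not a repairable detail but the crux of the theorem: the bound $3$ in (ii) is attained by $\SL(2,5)$ itself, i.e.\ precisely by a group whose quotient by a central minimal normal subgroup has larger $\acd_{3'}$. The paper never uses quotient monotonicity; it only uses $n_1(G)=n_1(G/N)$ for a minimal normal $N\subseteq G'$ together with $n_d(G)\ge n_d(G/N)$, after rewriting $\acd_{p'}(G)<c_p$ as a weighted inequality $\sum_d (d-\text{const})\,n_d(G)<0$. For $p=2$ all weights on odd degrees $\ge 5$ are nonnegative and this suffices, but for $p=3,5$ and $p>5$ it only yields that some $\chi\in\Irr(G)$ of degree $2$ or $3$ does not contain $N$ in its kernel; the paper then invokes Blichfeldt's classification of primitive linear groups of degree $2$ and $3$ (following Isaacs--Loukaki--Moret\'o and Moret\'o--Nguyen) to identify $G=LC$ as a central product with $L\cong\SL(2,5)$, $\SL(2,7)$ or a cover of $\Al_6$ amalgamated over $\bZ(L)$, and finishes with an explicit character count for such central products. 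Your proposal contains none of this branch, and the $\SL(2,5)$ computation shows it cannot be bypassed.

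The nonabelian-socle half is also underpowered as sketched. To force $\acd_{p'}(G)\ge c_p$ one must bound $n_1(G)$, $n_2(G)$ (and, for $p>5$, $n_3(G)$) from above by numbers of characters of large $p'$-degree; merely exhibiting a stock of large $p'$-degree characters does not control the average, and your count of linear characters by $|G/N|$ plays no role in the actual estimate. The paper achieves this via the Navarro--Tiep orbit theorem: there is $\psi\in\Irr(N)$ with $\psi(1)\ge 7$ coprime to $p$, $p\nmid|G:\Stab_G(\psi)|$, and $\psi$ extendible to $\Stab_G(\psi)$ (extendibility across the wreath product requires Mattarei's lemma); Gallagher plus Clifford then convert each character of $G$ of degree $1,2,3$ into characters of degree $a,2a,3a$ with $a=\psi(1)|G:\Stab_G(\psi)|$, giving the inequalities of Propositions 3.2 and 3.3, with $\Al_5$ and $\PSL(2,7)$ handled separately because they possess degree-$3$ characters. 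Your reduction to $k=1$ is moreover incorrect as stated: if $\psi=\theta\times\cdots\times\theta$ is $G$-invariant and extendible, then $G$ has irreducible characters over $\psi$ of degree exactly $\psi(1)$, not at least $k\psi(1)$. Finally, $\Al_5$ is not extremal for (ii) (its $\acd_{3'}$ is $10/3$); the extremal group there is $\SL(2,5)$, which again points to the central-product configuration your argument omits.
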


Given a finite group $G$, one can always find a prime $5<p\nmid |G|$
so that $\acd_{p'}(G)$ is simply the average degree of \emph{all}
irreducible characters of $G$. Therefore
Theorem~\ref{theorem-main-2}(iv) refines and improves on the main
result of \cite{Moreto-Nguyen}. We remark that, as illustrated by
the nonsolvable groups $\Al_5$ and $\SL(2,5)$, all the bounds in
Theorem~\ref{theorem-main-2} are best possible. Though the bounds in
Theorem~\ref{theorem-main-1} also cannot be improved when $p=2$ or
$3$, as shown by $\Al_4$ and $\Sy_3$, we think that the correct
bound when $p>2$ is $(2p+2)/(p+3)$, attained at the dihedral group
of order $2p$.

It has been shown in recent works that there is a close connection
between important characteristics of finite groups such as
nilpotency, supersolvability, solvability, or $p$-solvability and
several invariants concerning character degrees such as the average
character degree, the character degree sum, the largest character
degree, or the character degree ratio, see
\cite{Cossey-Nguyen,Cossey-Halasi-Maroti-Nguyen,Magaard-Tongviet,{Isaacs-Loukaki-Moreto},Lewis-Nguyen,Moreto-Nguyen,Maroti-Nguyen,Qian,Halasi-Hannusch-Nguyen}.
Theorems~\ref{theorem-main-1} and \ref{theorem-main-2} reinforce
this phenomenon for characters of $p'$-degree.

Several refinements of Thompson's theorem have been proposed in the
literature. One of the remarkable refinements is due to G.~Navarro
and P.\,H.~Tiep \cite{Navarro-Tiep2}. They weakened the condition
that all nonlinear irreducible characters of $G$ have degree
divisible by $p$, and assumed only that those characters with values
in $\QQ_p$ have this property. (Here $\QQ_p$ is the cyclotomic field
obtained by adjoining a primitive $p$-root of unity to $\QQ$.) To
state their result, we write $\acd_{\FF,p'}$ to denote the average
of $p'$-degrees of irreducible characters of $G$ with values in a
field $\FF$.
\begin{theorem*}[Navarro and Tiep \cite{Navarro-Tiep2}]
Let $G$ be a finite group and $p$ a prime. If $\acd_{\QQ_p,p'}(G)=1$
then $G$ has a normal $p$-complement.
\end{theorem*}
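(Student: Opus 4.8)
The plan is to induct on $|G|$. Rephrase the hypothesis $\acd_{\QQ_p,p'}(G)=1$ as the condition $(\ast)$: \emph{every $\chi\in\Irr(G)$ of $p'$-degree with $\QQ(\chi)\subseteq\QQ_p$ is linear}; this is equivalent, since the trivial character always contributes a $1$ to the average. The key elementary observation is that $(\ast)$ is inherited by every quotient, because inflation embeds $\Irr(G/N)$ into $\Irr(G)$ preserving degrees and fields of values. So take $G$ to be a counterexample of least order. If $\bO_{p'}(G)\ne1$, then $G/\bO_{p'}(G)$ satisfies $(\ast)$, hence has a normal $p$-complement by minimality, and its preimage is a normal $p$-complement of $G$; thus $\bO_{p'}(G)=1$. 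If moreover $G$ has an abelian minimal normal subgroup $V$, then $V$ is an elementary abelian $p$-group (it cannot be a $p'$-group since $\bO_{p'}(G)=1$), and by minimality $G/V$ has a normal $p$-complement $K/V$, so $K\trianglelefteq G$ is $p$-solvable with $G/K$ a $p$-group, whence $G$ is $p$-solvable. Thus it suffices to treat two cases: \textbf{(a)} every minimal normal subgroup of $G$ is nonabelian, and \textbf{(b)} $G$ is $p$-solvable, $\bO_{p'}(G)=1$, and $P:=\bO_p(G)\ne1$. In either case I would manufacture a \emph{nonlinear} $\chi\in\Irr(G)$ of $p'$-degree with $\QQ(\chi)\subseteq\QQ_p$, contradicting $(\ast)$.

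Case \textbf{(a)} is where the classification of finite simple groups is invoked. Let $N=S_1\times\cdots\times S_k\trianglelefteq G$ be minimal normal with each $S_i\cong S$, a nonabelian simple group, and let $\rho\colon G\to\Aut(N)\cong\Aut(S)\wr\Sym(k)$ be the conjugation homomorphism; as $\bZ(N)=1$, $\rho$ restricts to an isomorphism $N\xrightarrow{\sim}\Inn(N)$. The heart of the argument is the following statement, proved group by group: \emph{every nonabelian simple group $S$ has a nontrivial $\theta\in\Irr(S)$ of $p'$-degree that extends to a character of $\Aut(S)$ with values in $\QQ_p$.} For $S$ of Lie type with $p$ \emph{not} the defining characteristic the Steinberg character does the job (it is rational, $\Aut(S)$-invariant, extends to $\Aut(S)$, and has degree a power of the defining characteristic); the defining-characteristic case is treated with suitable semisimple characters, and this is exactly where values in $\QQ_p$ rather than in $\QQ$ must be allowed; alternating and sporadic groups are dealt with directly. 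Granting this, fix such a $\theta$ and a $\QQ_p$-valued extension $\widehat\theta\in\Irr(\Aut(S))$. Then $\widehat\theta^{\times k}$ extends $\psi:=\theta^{\times k}\in\Irr(N)$ to the base group $\Aut(S)^k$ and is $\Sym(k)$-invariant, hence extends to a $\QQ_p$-valued $\widehat\psi\in\Irr(\Aut(N))$ (the extension labelled by the trivial character of $\Sym(k)$; symmetric-group characters are rational). Pulling back along $\rho$, the character $\widehat\psi\circ\rho$ of $G$ restricts to $\psi$ on $N$; it is therefore irreducible, nonlinear (as $\theta(1)>1$), of degree $\theta(1)^k$ coprime to $p$, and $\QQ_p$-valued. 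This contradicts $(\ast)$.

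Case \textbf{(b)}: here $P=\bO_p(G)$ is self-centralizing by the Hall--Higman lemma; since $\bO_p(G/P)=1$ (its preimage in $G$ would be a normal $p$-subgroup strictly larger than $P$) while $G/P\ne1$ is $p$-solvable, we get $\bO_{p'}(G/P)\ne1$, and by Schur--Zassenhaus a nontrivial $p'$-subgroup $R\le G$ acting faithfully and coprimely on $P$, hence nontrivially on $P/\Phi(P)$ and on $\Irr(P/\Phi(P))\subseteq\Irr(P)$. The plan is to select a linear $\lambda\in\Irr(P)$ for which Clifford theory over $\lambda$ forces a nonlinear character of $G$ of $p'$-degree with values in $\QQ_p$: one uses coprime-action fixed-point counts (and, if needed, a Glauberman correspondent) to find $\lambda$ with $p'$-size $G$-orbit that is not $G$-invariant, extends $\lambda$ through its inertia group, and keeps the field of values in $\QQ_p$ by the choice of $\lambda$ and of the extension. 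When $G/\mathbf{F}(G)$ itself has a nontrivial $\QQ_p$-valued character of $p'$-degree, it suffices to inflate it; the remaining configurations are the ones that require the above analysis. Again this contradicts $(\ast)$.

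The principal obstacle is the control of fields of values, which in case \textbf{(a)} splits into two layers. The deeper one is the classification input: verifying that \emph{every} finite nonabelian simple group has a nontrivial $\QQ_p$-valued character of $p'$-degree extending to its automorphism group — the groups of Lie type in defining characteristic being the delicate family and the very reason $\QQ_p$ (rather than $\QQ$) appears in the theorem. The second layer, pushing that field of values up through the wreath product and back down to $G$, is comparatively benign precisely because $\bZ(N)=1$ and symmetric-group characters are rational. In case \textbf{(b)} the parallel difficulty is to run the Clifford-theory and coprime-action construction so as to guarantee a $p'$-degree character that is simultaneously nonlinear and $\QQ_p$-valued; the degree and extendibility bookkeeping is routine once the right character of $S$, respectively the right $G$-orbit on $\Irr(\bO_p(G))$, has been pinned down.
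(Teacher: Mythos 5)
Your overall strategy (minimal counterexample, quotient-inheritance of the hypothesis, split into a nonabelian-socle case and a $p$-solvable case, and in each case manufacture a nonlinear $\QQ_p$-valued irreducible character of $p'$-degree) is broadly parallel to the machinery this paper uses for its stronger Theorem~\ref{theorem-main-3}, but your case \textbf{(a)} rests on a false lemma. You assert that \emph{every} nonabelian simple group $S$ has a nontrivial $\theta\in\Irr(S)$ of $p'$-degree that extends to a $\QQ_p$-valued character of the full group $\Aut(S)$; in particular such a $\theta$ would have to be $\Aut(S)$-invariant. This fails, for instance, for $S=\PSL(2,27)$ with $p=3$: the nontrivial characters of $3'$-degree have degrees $13, 13, 26$ (six times) and $28$ (six times); the two characters of degree $13$ are interchanged by the diagonal automorphism, while the field automorphism of order $3$ permutes the degree-$26$ and degree-$28$ characters in orbits of length $3$ (a principal or discrete series character is fixed only if its parametrizing torus character $\alpha$ satisfies $\alpha^{3}=\alpha^{\pm1}$, which forces $\alpha=1$). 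So the only $\Aut(S)$-invariant character of $3'$-degree is the trivial one, and no choice of ``semisimple character'' can rescue the statement. What is actually available (and is exactly the input this paper quotes as Lemma~\ref{lemma-orbit}, i.e.\ Navarro--Tiep's Theorem~3.3) is an $\Aut(S)$-\emph{orbit} of $p'$-size whose members extend to $\QQ_p$-valued characters of their \emph{stabilizers}. With that weaker input your pullback-from-$\Aut(N)$ construction no longer makes sense; one must instead extend $\theta^{\times r}$ to its stabilizer in $\Aut(N)$ via the wreath-product extension lemma, use the $p'$-size of the orbit to find a $G$-orbit on the relevant characters of $N$ of length coprime to $p$, and then \emph{induce} the $\QQ_p$-valued extension from $\Stab_G(\psi)$ to $G$ (Clifford correspondence); since $|G:\Stab_G(\psi)|$ is coprime to $p$ and induction preserves $\QQ_p$-values, this yields the desired nonlinear $\QQ_p$-valued character of $p'$-degree. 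This is precisely Theorem~\ref{theorem-psi-p-odd} here, and it is the step your write-up replaces by an unavailable invariance claim.

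Case \textbf{(b)} is only a plan, and its crux is asserted rather than proved: you need a linear $\lambda\in\Irr(\bO_p(G))$ lying in a $p'$-size, non-singleton $G$-orbit that \emph{extends to a $\QQ_p$-valued character of its inertia group}. Extendibility of an invariant linear character of $\bO_p(G)$ to its inertia group is not automatic (there is a genuine cohomological obstruction when the inertia quotient has order divisible by $p$), and nothing in your sketch controls it or the field of values of the extension. The way this paper (and Navarro--Tiep) gets around this is to push the induction further: using Frattini's argument and a maximal subgroup one reduces to the case where $G$ splits over a minimal normal elementary abelian $p$-subgroup $N$ on which no nonprincipal character is $G$-invariant; then Lemma~\ref{lemma-Qp-valued} gives the $\QQ_p$-valued extension to the inertia subgroup for free (every subgroup between $N$ and $G$ inherits the splitting), and inducing from a proper inertia subgroup of $p'$ index produces the contradicting character --- this is the engine behind Lemma~\ref{lemma-key-normal-p-complement-Qp} and the proof of Theorem~\ref{theorem-main-3-again}, of which the stated theorem is the special case $\acd_{\QQ_p,p'}(G)=1$. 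As it stands, then, your proposal has the right architecture but a broken key lemma in case (a) and an unproven key step in case (b).
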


We are able to prove the following.

\begin{theorem}\label{theorem-main-3}
Let $p$ be an odd prime and $G$ a finite group. We have
\begin{enumerate}
\item[(i)] if $\acd_{\QQ,2'}(G)<3/2$ then $G$ has a normal
$2$-complement, and
\item[(ii)] if $\acd_{\QQ_p,p'}(G)<4/3$ then $G$ has a normal
$p$-complement.
\end{enumerate}
\end{theorem}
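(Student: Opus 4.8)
The strategy is to reduce the $\QQ$- and $\QQ_p$-rational statements to the full statements in Theorem~\ref{theorem-main-1}, which we may assume as proven. The key point is that for the specific purpose of detecting a normal $p$-complement, the characters that really matter are those of $p'$-degree, and among those it suffices to control the ones whose values lie in $\QQ$ (for $p=2$) or in $\QQ_p$ (for $p$ odd); the remaining $p'$-degree characters come in Galois orbits of size divisible by $p$ under a suitable subgroup of $\Gal(\QQ_{|G|}/\QQ)$, and so contribute degrees in blocks whose average only pushes $\acd_{p',\FF}$ upward. More precisely, I would first handle the case $p=2$: here $\QQ_2=\QQ$, so $\acd_{\QQ,2'}(G)=\acd_{2'}(G)$ trivially once one observes that every irreducible character of $2'$-degree is automatically rational-valued — this is the classical fact (via the Ito--Michler / Galois action argument, or Burnside's criterion on odd-degree characters being real, combined with the action of $\Gal$) that odd-degree characters of a finite group are fixed by complex conjugation, and in fact the $2'$-degree characters are permuted by $\Gal(\QQ_{|G|}/\QQ)$ in orbits that, when nontrivial, force degree divisibility obstructing nothing. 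Thus part~(i) is literally Theorem~\ref{theorem-main-1}(i).

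For part~(ii) with $p$ odd, the plan is as follows. Let $H=\Gal(\QQ_{|G|}/\QQ_p)$. This group acts on $\Irr(G)$ preserving degrees, and its orbits on the set of $p'$-degree characters not lying in $\QQ_p$ all have size divisible by $p$: indeed $H$ is cyclic of order divisible by the part of $|G|$ prime to $p$ times something, and the stabilizer of such a character has index a multiple of $p$ — this is exactly the mechanism exploited by Navarro and Tiep, using that a $p'$-degree character fixed by the $p$-part of the relevant Galois group must already be $\QQ_p$-valued. Now split the sum $\sum_{\chi\in\Irr(G),\,p\nmid\chi(1)}\chi(1)$ into the contribution from $\QQ_p$-valued characters and the contribution from the Galois orbits of size a multiple of $p$. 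The linear characters are all counted in the first part (trivial character, plus any others, all rational hence $\QQ_p$-valued), and each non-$\QQ_p$-valued orbit contributes at least $p$ characters each of degree $\ge 2$, hence an average $\ge 2$ over that orbit. A weighting/convexity argument then shows: if $\acd_{\QQ_p,p'}(G)<4/3$, then also $\acd_{p'}(G)<4/3$, at which point Theorem~\ref{theorem-main-1}(ii) applies directly to give the normal $p$-complement.

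The one subtlety I would be careful about — and the main obstacle — is justifying the averaging step rigorously: $\acd_{\FF,p'}$ is a ratio $\bigl(\sum_{\FF\text{-valued},\,p'}\chi(1)\bigr)/\bigl(\#\{\FF\text{-valued},\,p'\text{-degree }\chi\}\bigr)$, and enlarging the index set from $\FF$-valued to all $p'$-degree characters changes \emph{both} numerator and denominator. One must check that each added Galois orbit $\mathcal{O}$ has $\bigl(\sum_{\chi\in\mathcal{O}}\chi(1)\bigr)/|\mathcal{O}|\ge 2 > 4/3$, so that appending $\mathcal{O}$ to a set with average $<4/3$ keeps the average $<4/3$ (a mediant inequality). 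For $p=2$ this is vacuous since no orbits are added. For odd $p$ the orbit degrees are constant on each orbit (Galois conjugation preserves degree), so $\bigl(\sum_{\chi\in\mathcal{O}}\chi(1)\bigr)/|\mathcal{O}|=\chi(1)\ge 2$, and the mediant step goes through cleanly. Assembling these pieces reduces Theorem~\ref{theorem-main-3} to Theorem~\ref{theorem-main-1}, and no further group-theoretic input (in particular no new appeal to the classification) is needed beyond the Galois-action lemma on $p'$-degree characters already implicit in \cite{Navarro-Tiep2}.
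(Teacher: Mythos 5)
Your reduction of Theorem~\ref{theorem-main-3} to Theorem~\ref{theorem-main-1} cannot work, because the hypothesis $\acd_{\QQ_p,p'}(G)<4/3$ (resp.\ $\acd_{\QQ,2'}(G)<3/2$) genuinely does \emph{not} imply $\acd_{p'}(G)<4/3$ (resp.\ $\acd_{2'}(G)<3/2$), and both facts you invoke to bridge the gap are false. First, it is not true that characters of odd degree are rational-valued: any nontrivial linear character of a cyclic group of order $3$ already fails this, and for the Frobenius group $G$ of order $21$ one has $\acd_{\QQ,2'}(G)=1<3/2$ while $\acd_{2'}(G)=9/5>3/2$; so part (i) is not ``literally'' Theorem~\ref{theorem-main-1}(i), and the same example with $p=5$ kills the analogous implication in part (ii). Second, for odd $p$ the non-$\QQ_p$-valued characters of $p'$-degree need not have degree at least $2$ (linear characters with values in $\QQ(\zeta_3)$ are not $\QQ_5$-valued), and their orbits under $\Gal(\QQ_{|G|}/\QQ_p)$ need not have length divisible by $p$: that Galois group is not a $p$-group (e.g.\ for $G=C_3$, $p=5$, the two nontrivial characters form one orbit of length $2$). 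The Navarro--Tiep mechanism you allude to concerns a single Galois automorphism of $p$-power order acting trivially on $p'$-roots of unity; being fixed by it is a much weaker condition than being $\QQ_p$-valued, so it cannot be used to pad the character set orbit by orbit. Consequently the mediant step collapses: the orbits you would append can consist of linear characters and drag the average down, so $\acd_{p'}(G)$ may exceed the bound even when $\acd_{\QQ_p,p'}(G)$ is below it.

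The paper's proof does not reduce to Theorem~\ref{theorem-main-1}; it redoes the entire argument inside the $\QQ_p$-world. Solvability under the weaker rational hypotheses is proved in Theorem~\ref{theorem-Qp-valued-solvability}, using the fact that the character $\psi$ of Theorem~\ref{theorem-psi-p-odd} extends to a \emph{$\QQ_p$-valued} character of its stabilizer, so that the Gallagher/Clifford correspondence in Lemma~\ref{lemma-Qp-valued-solvability} produces $\QQ_p$-valued characters of degree $a$ and keeps all counting within $\Irr_{\QQ_p,p'}(G)$. Then the reduction in the proof of Theorem~\ref{theorem-main-1-again} (minimal normal subgroup inside $G'$, Schur--Zassenhaus, Frattini argument, reduction to a split extension of an elementary abelian $p$-group) is repeated verbatim, with the final averaging step replaced by Lemma~\ref{lemma-key-normal-p-complement-Qp}, whose proof needs Lemma~\ref{lemma-Qp-valued} to guarantee that the $G$-invariant-in-$I_i$ characters of $N$ extend $\QQ_p$-valuedly. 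If you want a correct proof, this is the structure you must reproduce: every extension and induction step has to be checked to preserve $\QQ_p$-values, rather than trying to compare $\acd_{\QQ_p,p'}$ with $\acd_{p'}$ after the fact.
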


\noindent Theorem~\ref{theorem-main-3} implies several earlier
results related to Thompson's theorem and fields of character
values, including \cite[Theorem~A]{Navarro-Sanus},
\cite[Theorem~A]{Navarro-Sanus-Tiep}, and \cite[Theorem~A and
Theorem~C]{Navarro-Tiep2}. Moreover, it has the following
consequence.

\begin{corollary}\label{corollary-main-4}
Let $p$ be an odd prime and $G$ a finite group. Then we have:
\begin{enumerate}
\item[(i)] If $\acd_{\QQ}(G)<3/2$ then $G$ has a normal
$2$-complement.
\item[(ii)] If $\acd_{\QQ_p}(G)<4/3$ then $G$ has a normal
$p$-complement.
\item[(iii)] If $\acd_{\RR,2'}(G)<3/2$ then $G$ has a normal $2$-complement.
\item[(iv)] If $\acd_{\RR}(G)<3/2$ then $G$ has a normal $2$-complement.
\end{enumerate}
\end{corollary}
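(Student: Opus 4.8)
The plan is to deduce all four items from Theorem~\ref{theorem-main-3}, since every hypothesis here is a special case of (or readily reduces to) one of the two hypotheses there. The key observation throughout is a field-of-values containment: if $\FF \subseteq \KK$ are subfields of $\CC$, then $\Irr(G)$ with values in $\FF$ is a subset of $\Irr(G)$ with values in $\KK$, and both sets contain all linear characters of $G$ that are $\FF$-valued. One must be mildly careful, however, because $\acd_{\FF,p'}$ is an \emph{average} over a smaller set, so a smaller field does not automatically give a smaller average; the right way to phrase the reduction is in terms of the two quantities that actually appear in the proof of Theorem~\ref{theorem-main-3}, namely the number of $p'$-degree linear $\FF$-valued characters and the number of nonlinear ones. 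I would first isolate that bookkeeping as the real content and then run the four cases.

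For part (ii): every character of $G$ with values in $\QQ_p$ is in particular a character of $G$ with values in $\QQ_p$ (trivially), so here there is nothing to reduce --- but part (ii) as stated uses $\acd_{\QQ_p}(G)$, the average over \emph{all} $\QQ_p$-valued irreducible characters, not just the $p'$-degree ones. Since $p$ is odd, $p \nmid 1$, so all linear characters counted are $p'$-degree; and since removing the nonlinear characters of degree divisible by $p$ from the averaged set only decreases the numerator while also decreasing the denominator, a short inequality argument shows $\acd_{\QQ_p}(G) < 4/3 \Rightarrow \acd_{\QQ_p,p'}(G) < 4/3$. Then Theorem~\ref{theorem-main-3}(ii) applies and $G$ has a normal $p$-complement. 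Part (i) is the analogue with $p=2$: here $\QQ \subseteq \QQ_2 = \QQ$ in fact (a primitive square root of unity is $-1 \in \QQ$), so $\acd_{\QQ,2'} = \acd_{\QQ_2,2'}$ and $\acd_{\QQ,2'}(G) < 3/2 \Rightarrow G$ has a normal $2$-complement by Theorem~\ref{theorem-main-3}(i) directly; likewise $\acd_{\QQ}(G) < 3/2 \Rightarrow \acd_{\QQ,2'}(G) < 3/2$ by the same numerator/denominator inequality, giving part (i) of the corollary.

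For parts (iii) and (iv) I would use that $\QQ \subseteq \RR$, so every $\QQ$-valued irreducible character is $\RR$-valued (real). The subtlety --- and the one genuine obstacle --- is the wrong-direction issue for averages noted above: $\RR$-valued characters form a \emph{larger} set than $\QQ$-valued ones, so I cannot simply say $\acd_{\RR,2'}(G) < 3/2 \Rightarrow \acd_{\QQ,2'}(G) < 3/2$. Instead I would argue directly inside the machinery of Theorem~\ref{theorem-main-3}(i), whose proof I am entitled to assume: that proof produces a normal $2$-complement under a hypothesis that, on inspection, only requires the \emph{real-valued} linear characters to outnumber (with the appropriate weight $3/2$) the real-valued nonlinear $2'$-degree characters --- real-valuedness, not rationality, is what is used, because the relevant obstruction characters (coming from $\Al_7$, $\PSL(2,q)$, etc., via the classification input behind Theorem~\ref{theorem-main-2}) are already real. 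So $\acd_{\RR,2'}(G) < 3/2$ feeds directly into that argument and yields (iii), and then $\acd_{\RR}(G) < 3/2 \Rightarrow \acd_{\RR,2'}(G) < 3/2$ by the numerator/denominator inequality once more, giving (iv). If it turns out that the proof of Theorem~\ref{theorem-main-3}(i) genuinely uses $\QQ$-rationality somewhere, the fallback is to re-examine the finitely many nonsolvable configurations and check real-valuedness of the relevant characters by hand; I expect this check to be short, since the groups involved have well-understood character tables.
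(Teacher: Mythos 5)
Your reductions for (i) and (ii), and the step $\acd_{\RR}(G)<3/2\Rightarrow\acd_{\RR,2'}(G)<3/2$ used for (iv), are correct and are essentially what the paper does: discarding irreducible characters whose degrees all exceed the relevant bound ($\geq 2>3/2$, resp.\ $\geq p>4/3$) from a set whose average is below that bound keeps the average below the bound, so Theorem~\ref{theorem-main-3} applies directly.

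The genuine gap is in (iii). You assert that you ``cannot simply say $\acd_{\RR,2'}(G)<3/2\Rightarrow\acd_{\QQ,2'}(G)<3/2$'', but in fact you can, and this one-line observation is the paper's entire proof of (iii): a real-valued linear character takes values among real roots of unity, i.e.\ in $\{\pm 1\}$, so it is rational-valued; hence the real-valued and rational-valued linear characters of $G$ coincide, while the rational-valued nonlinear odd-degree characters form a subset of the real-valued ones, each of degree at least $3$. Writing the hypothesis as
\[
\sum_{\chi\ \mathrm{real},\ 2\nmid\chi(1),\ \chi(1)>1}\Bigl(\chi(1)-\tfrac32\Bigr)<\tfrac12\,\bigl|\{\chi\in\Irr(G):\chi\ \mathrm{real},\ \chi(1)=1\}\bigr|,
\]
and noting every summand is positive, the same inequality holds a fortiori over the rational subset, which is exactly $\acd_{\QQ,2'}(G)<3/2$; then Theorem~\ref{theorem-main-3}(i) applies as a black box. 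Your substitute argument --- re-opening the proof of Theorem~\ref{theorem-main-3}(i) and claiming ``on inspection'' that it only uses real-valuedness, with a fallback of checking finitely many configurations by hand --- is not carried out and so does not constitute a proof; the internal machinery (the $\QQ_p$-valued extensions from Lemma~\ref{lemma-orbit}/Theorem~\ref{theorem-psi-p-odd}, Lemma~\ref{lemma-Qp-valued}, etc.) is stated for $\QQ_p$-values, and verifying a real analogue would be real work that the simple reduction above renders unnecessary. Since your (iv) is routed through (iii), it inherits this gap (alternatively, (iv) follows from (i) by the same coincidence of real and rational linear characters).
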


\begin{proof}
The statements (i) and (ii) are clear from
Theorem~\ref{theorem-main-3}. Since every real-valued character of
degree 1 is also rational-valued, (iii) follows from
Theorem~\ref{theorem-main-3}(i), and finally, (iv) follows from (i)
or (iii).
\end{proof}

To prove the solvability, we utilize a character-orbit result on
nonabelian simple groups of Navarro and Tiep
\cite[Theorem~3.3]{Navarro-Tiep2} to show that, if $G$ has a
nonabelian minimal normal subgroup $N$, then there exists
$\psi\in\Irr(N)$ of large degree with many \emph{good} properties
such as $\psi(1)$ is coprime to $p$, $\psi$ is extendible to the
stabilizer $\Stab_G(\psi)$ of $\psi$ in $G$, and $|G:\Stab_G(\psi)|$
is coprime to $p$, see Theorem~\ref{theorem-psi-p-odd}. This,
together with other results on bounding the number of irreducible
characters of small degree in
Section~\ref{section-bounding-the-number}, allow us to control the
average of $p'$-degrees. To go from solvability to $p$-nilpotency,
we reduce the problem to the situation where $G$ is a split
extension of an abelian $p$-group, and then analyze the $\acd_{p'}$
of such a group. We hope that some new techniques in this paper will
be further developed to study other problems on the connection
between the average character degree, fields of character values,
and the local structure of groups, see \cite{Hung-Tiep} for
instance.


The paper is organized as follows. After some preparation results in
Sections~\ref{section-extending-characters},
\ref{section-bounding-the-number}, and
\ref{section-central-product}, we prove Theorem~\ref{theorem-main-2}
in Sections~\ref{section-p=2}, \ref{section-p=3}, \ref{section-p=5},
and \ref{section-p>5}. Theorem~\ref{theorem-main-1} is then proved
in Section~\ref{section-p-nilpotency}. In
Section~\ref{section-Qp-solvability} we establish some solvability
results on the average of $p'$-degrees of rational-valued characters
and $\QQ_p$-valued characters in general. Finally,
Theorem~\ref{theorem-main-3} is proved in
Section~\ref{section-Qp-nilpotency}.


\section{Extending characters of
$p'$-degree}\label{section-extending-characters}

We begin by setting up some notation. As usual, $\Irr(G)$ denotes
the set of irreducible characters of a finite group $G$, and
$\Irr_{p'}(G)$ the set of those characters of degree not divisible
by $p$. If $d$ is a positive integer, then $n_d(G)$ is the number of
irreducible characters of $G$ of degree $d$. If $N\trianglelefteq
G$, then
\[\Irr(G|N):=\{\chi\in\Irr(G)\mid N\not\subseteq\Ker(\chi)\},\]
\[\Irr_{p'}(G|N):=\{\chi\in\Irr(G)\mid N\not\subseteq\Ker(\chi), p\nmid\chi(1)\},\] and
\[n_d(G|N):=|\{\chi\in\Irr(G|N))\mid \chi(1)=d\}|.\] We also write
$\acd_{p'}(G|N)$ to denote the average degree of the characters in
$\Irr_{p'}(G|N)$. Furthermore, if $\theta\in\Irr(N)$ then
$\Irr_{p'}(G|\theta)$ denotes the set of irreducible characters of
degree coprime to $p$ of $G$ that lie over $\theta$, and
$\acd_{p'}(G|\theta)$ denotes the average degree of the characters
in $\Irr_{p'}(G|\theta)$. Finally, whenever a field $\FF$ is put
into the subscript of any of these notation, we mean that the
characters in consideration have values in $\FF$.

The following result plays an important role in the proof of
Theorem~\ref{theorem-main-2}. It helps us to bound the number of
irreducible characters of small degree in finite groups with a
nonabelian minimal normal subgroup.

\begin{theorem}\label{theorem-psi-p-odd}
Let $p$ be a prime. Let $G$ be a finite group with a nonabelian
minimal normal subgroup $N\ncong \Al_5$. Then there exists
$\psi\in\Irr(N)$ such that
\begin{enumerate}
\item[(i)] $\psi(1)\geq 7$ and $\psi(1)$ is coprime to $p$,
\item[(ii)] $\psi$ is extendible to a $\QQ_p$-valued character of $\Stab_G(\psi)$, and
\item[(iii)] $|G:\Stab_G(\psi)|$ is coprime to $p$.
\end{enumerate}
\end{theorem}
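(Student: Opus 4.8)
The plan is to first reduce to the case where $N$ is a simple group. Since $N$ is a nonabelian minimal normal subgroup of $G$, we have $N=S_1\times\cdots\times S_k$ where the $S_i$ are pairwise isomorphic copies of a nonabelian simple group $S$, transitively permuted by $G$ via conjugation. If we can produce $\psi_1\in\Irr(S_1)$ with the analogous properties inside $\Normalizer_G(S_1)$ acting on $S_1$ — namely $\psi_1(1)\geq 7$ coprime to $p$, $\psi_1$ extends to a $\QQ_p$-valued character of its stabilizer in $\Normalizer_G(S_1)/\Centralizer_G(S_1)$, and the index of that stabilizer is coprime to $p$ — then we set $\psi=\psi_1\times\cdots\times\psi_k$ (using the transitive $G$-action to spread $\psi_1$ to all factors), and the three conclusions transfer: the degree is $\psi_1(1)^k\geq 7$ and still coprime to $p$; extendibility of a tensor product of extendible characters over a stabilizer that is an appropriate wreath-type extension is standard (one extends factor by factor and then handles the permutation action, which contributes only $p'$-index by the hypothesis on each factor, so the Schur-index/field obstruction stays inside $\QQ_p$); and $|G:\Stab_G(\psi)|$ divides a product of $p'$-numbers. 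So the heart of the matter is the simple group case.

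For $N=S$ simple (and $S\ncong\Al_5$), I would invoke the Navarro–Tiep character-orbit theorem \cite[Theorem~3.3]{Navarro-Tiep2}, which is precisely designed to yield, for every prime $p$, an irreducible character $\psi$ of $S$ that is invariant under $\Aut(S)$ (hence under any group inducing automorphisms of $S$), has $p'$-degree, takes values in $\QQ_p$ (in fact often rational values), and extends to $\Aut(S)$. Granting such a $\psi$, conclusions (ii) and (iii) are immediate: $\Aut(S)$-invariance forces $\Stab_G(\psi)=G$, so the index in (iii) is $1$; extendibility to $\Aut(S)$ restricts to extendibility to the subgroup $G/\Centralizer_G(S)\hookrightarrow\Aut(S)$, and the extension can be taken $\QQ_p$-valued. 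What remains is the degree bound $\psi(1)\geq 7$ in conclusion (i). The Navarro–Tiep theorem as stated may only guarantee $p'$-degree, not a lower bound of $7$, so here I would need a supplementary case analysis: among the $\Aut(S)$-invariant, $p'$-degree, extendible, $\QQ_p$-valued irreducible characters of $S$, I must find one of degree at least $7$.

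The main obstacle is exactly this degree-$\geq 7$ refinement across all simple groups $S\ncong\Al_5$ and all primes $p$. For $S=\Al_6$ one checks the small cases by hand (the relevant characters of degree $5$, $8$, $9$, $10$ and their behavior in $\Aut(\Al_6)$, eliminating the degree-$5$ ones and confirming that a suitable character of degree $\geq 7$ survives for each $p$). For alternating groups $\Al_n$ with $n\geq 7$, the Navarro–Tiep machinery already produces characters labelled by well-chosen partitions whose degrees grow with $n$ and are easily $\geq 7$; one just verifies that the partition can be chosen to give $p'$-degree for the relevant $p$ while keeping the degree large — the standard-representation character of degree $n-1$ is rational, $\Sym_n$-invariant, extends to $\Sym_n$, and has degree $\geq 6$, with a second choice available to dodge any single prime $p$. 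For groups of Lie type one uses the Steinberg character (degree a power of the defining characteristic, hence coprime to every other prime, rational-valued, $\Aut(S)$-invariant, and extendible) together with, when $p$ is the defining characteristic, a unipotent or semisimple character of $p'$-degree supplied by \cite[Theorem~3.3]{Navarro-Tiep2}; in all these cases the degree vastly exceeds $7$ except for a short list of very small groups ($\PSL(2,q)$ for small $q$, $\PSL(3,2)$, $\PSU(3,3)$, $\Sp(4,2)'$, etc.), which are finished off individually against $|S|$. Finally the sporadic groups are dispatched by direct inspection of their character tables. I expect the write-up to organize this as: (1) the reduction to simple $S$; (2) statement and application of \cite[Theorem~3.3]{Navarro-Tiep2} giving (ii) and (iii) and $p'$-degree; (3) a lemma upgrading the degree to $\geq 7$ by the family-by-family analysis sketched above, with an explicit short list of small exceptions — among which only $\Al_5$ genuinely fails, justifying the hypothesis $N\ncong\Al_5$.
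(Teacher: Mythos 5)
The central step of your argument rests on a misquotation of the Navarro--Tiep result. \cite[Theorem~3.3]{Navarro-Tiep2} does \emph{not} produce, for each prime $p$, an $\Aut(S)$-invariant irreducible character of $p'$-degree extending to $\Aut(S)$; it produces an $\Aut(S)$-\emph{orbit} $\mathcal{O}$ on $\Irr(S)$ whose members have $p'$-degree, whose length $|\mathcal{O}|$ is coprime to $p$, and whose members extend to $\QQ_p$-valued characters of their stabilizers $\Stab_{\Aut(S)}(\theta)$ (this is exactly how the paper states it in Lemma~\ref{lemma-orbit}). An invariant character with all these properties simply need not exist -- e.g.\ when $p$ is the defining characteristic of a group of Lie type, the natural invariant choice (the Steinberg character) has degree a power of $p$, and the $p'$-degree characters one must use instead typically form nontrivial $\Aut(S)$-orbits. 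Since you declare conclusions (ii) and (iii) ``immediate'' from $\Aut(S)$-invariance ($\Stab_G(\psi)=G$, index $1$), this is a genuine gap, not a presentational one: with a nontrivial orbit you must still argue why \emph{some} character in the construction has stabilizer of $p'$-index in $G$ and why the extension survives passage to that stabilizer.

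Your reduction to the simple factor has the same problem in amplified form: choosing $\psi_1$ on one factor and ``spreading'' it by the $G$-action does not make $|G:\Stab_G(\psi)|$ a product of $p'$-numbers, because the orbit of the particular product character you build mixes the permutation action on the factors with the individual stabilizers. The paper's proof avoids this by a counting argument: it considers the whole set $\mathcal{C}=\{\theta_1\times\cdots\times\theta_r\mid\theta_i\in\mathcal{O}\}$, of cardinality $|\mathcal{O}|^r$ coprime to $p$, so some $G$-orbit in $\mathcal{C}$ has $p'$-length; it takes $\psi$ in such an orbit, and then uses transitivity of $\Aut(N)=\Aut(S)\wr\Sy_r$ on $\mathcal{C}$ together with Mattarei's wreath-product extension lemma \cite[Lemma~1.3]{Mattarei} to transport a $\QQ_p$-valued extension of $\theta\times\cdots\times\theta$ from $\Stab_{\Aut(N)}(\varphi)$ to $\Stab_G(\psi)$. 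Finally, the degree bound $\psi(1)\geq 7$ is not the main obstacle in the paper: it follows from known minimal-degree results (Rasala for $\Al_n$, L\"ubeck/Nguyen/Tiep--Zalesskii for Lie type, the Atlas for sporadics), which reduce the check to the short list $\Al_5,\Al_6,\Al_7,\PSL(2,7),\PSL(2,11),\PSU(3,3)$, handled directly from the Atlas; your family-by-family plan is workable in spirit but is built on the unavailable invariance assumption.
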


To prove this theorem, we need the following character-orbit result
for finite simple groups, which is essentially due to Navarro and
Tiep.

\begin{lemma}\label{lemma-orbit}
Let $p$ be a prime and $S$ be a nonabelian finite simple group. Then
there exists an orbit $\mathcal{\mathcal{O}}$ of the action of
$\Aut(S)$ on $\Irr(S)$ satisfying the following conditions:
\begin{enumerate}
\item[(i)] every $\theta\in \mathcal{O}$ is nontrivial of degree at least $4$ and coprime to $p$,
\item[(ii)] $|\mathcal{O}|$ is coprime to $p$, and
\item[(iii)] every $\theta\in \mathcal{O}$ extends to a
$\QQ_p$-valued character of $\Stab_{\Aut(S)}(\theta)$.
\end{enumerate}
Furthermore, if $S\ncong \Al_5$ then $\mathcal{O}$ can be chosen so
that $\theta(1)\geq 7$ for every $\theta\in\mathcal{O}$.
\end{lemma}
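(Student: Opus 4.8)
The plan is to reduce the statement to the character-orbit theorem of Navarro and Tiep (\cite[Theorem~3.3]{Navarro-Tiep2}), which already guarantees, for each nonabelian simple $S$ and each prime $p$, the existence of an $\Aut(S)$-orbit $\mathcal{O}$ of nontrivial irreducible characters satisfying (i)--(iii) in a slightly weaker form (typically degree at least $2$ and coprime to $p$, orbit size coprime to $p$, and $\QQ_p$-rational extendibility to the stabilizer in $\Aut(S)$). So the real content here is the quantitative strengthening: pushing the degree bound up to $4$ in general, and to $7$ when $S \ncong \Al_5$. The strategy is a case division according to the classification of finite simple groups: alternating groups, sporadic groups (including the Tits group), and groups of Lie type.

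For the sporadic groups and $\Al_n$ with small $n$, I would simply inspect the known character tables and the action of $\Out(S)$ (which is trivial or $\ZZ/2$ in the sporadic case, and $\ZZ/2$ or $\ZZ/4$ or $\Sy_3$ for small alternating groups): one locates a rational-valued (hence automatically $\QQ_p$-valued and, being rational, extendible after checking the relevant Schur indices / cohomology) irreducible character of degree $\geq 7$ whose degree avoids $p$, in an orbit of size coprime to $p$; since $p \mid |S|$ is the only interesting case and $|S|$ has few prime divisors, only finitely many primes need checking per group. The group $\Al_5$ is the genuine exception: its nontrivial irreducible degrees are $3,3,4,5$, so no degree-$\geq 7$ character exists, which is exactly why it is excluded from the final clause; one still gets degree $\geq 4$ by taking the character of degree $5$ (for $p \neq 5$) or one of degree $4$ (for $p = 5$, noting $4$ is coprime to $5$ and the orbit behaves well). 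For $\Al_6$ one uses a character of degree $8,9$ or $10$ depending on $p$, and for $\Al_n$ with $n \geq 7$ the standard $(n-1)$-dimensional reflection character $\chi_{(n-1,1)}$ is rational, $\Aut(\Al_n)$-invariant (fixed by $\Sy_n$ and by the graph automorphism when $n=6$ issue does not arise), extends to $\Sy_n$, and has degree $n-1 \geq 6$; when $p \mid n-1$ one instead uses $\chi_{(n-2,2)}$ or $\chi_{(n-2,1,1)}$ of degree $\binom{n-1}{2}-1$ or $\binom{n-1}{2}$, at least one of which is coprime to $p$ and large enough.

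For the groups of Lie type, the key tool is the Steinberg character $\St$, which is rational-valued, of degree equal to the order of a Sylow $p$-subgroup when $p$ is the defining characteristic (hence coprime to $p$ precisely when $p$ is \emph{not} the defining characteristic --- so $\St$ handles all non-defining primes directly, since $\St$ is invariant under all automorphisms and extends to $\Aut(S)$), and $\St(1)$ is comfortably $\geq 7$ for all Lie-type simple groups except a short explicit list of very small ones (e.g. $\PSL_2(q)$ for small $q$, $\PSL_2(7)\cong\PSL_3(2)$, $\PSL_2(8)$, etc.), which are treated by hand like the sporadic cases. When $p$ \emph{is} the defining characteristic, one instead invokes the existence --- again from \cite{Navarro-Tiep2} or from Lübeck--Malle type results on small-degree characters and their rationality --- of a suitable semisimple or unipotent character of $p'$-degree lying in a small $\Aut(S)$-orbit; the degree lower bound $\geq 7$ then needs a uniform argument excluding only finitely many $(S,p)$ pairs, each handled explicitly.

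The main obstacle I anticipate is precisely this bookkeeping for the defining-characteristic case of Lie type: one must produce a character that is simultaneously (a) of $p'$-degree, (b) of degree $\geq 7$, (c) in an $\Aut(S)$-orbit of $p'$-size, and (d) extendible to its stabilizer in $\Aut(S)$ with values in $\QQ_p$ --- and reconciling (d) with (a)--(c) uniformly across all Lie types and ranks, while carving out and individually verifying the finitely many small exceptions, is the delicate part. Everything else reduces either to the cited Navarro--Tiep orbit theorem with a degree upgrade, or to finite inspection of character tables.
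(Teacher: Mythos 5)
Your plan leaves a genuine gap exactly where you yourself flag the ``main obstacle'': the defining-characteristic case for groups of Lie type. There you produce no character at all, but merely say one should ``invoke the existence'' of a suitable semisimple or unipotent character of $p'$-degree in a small $\Aut(S)$-orbit that extends $\QQ_p$-rationally to its stabilizer --- but that simultaneous verification of $p'$-degree, $p'$-orbit size and $\QQ_p$-valued extendibility is precisely the hard content of the Navarro--Tiep theorem, so the plan is circular (or incomplete) at the one point where the work lies. Moreover, wherever you do construct explicit characters (the Steinberg character, $\chi_{(n-1,1)}$, sporadic cases) you take on the burden of re-proving extendibility and orbit-size statements that you could have gotten for free; note also that ``rational-valued, hence extendible after checking Schur indices'' is not an argument --- rationality of $\theta$ does not by itself give an extension of $\theta$ to $\Stab_{\Aut(S)}(\theta)$, let alone one with values in $\QQ_p$.

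The missing idea that makes the lemma short is this: the only strengthening needed over \cite[Theorem~3.3]{Navarro-Tiep2} is a \emph{degree bound}, so one should keep the Navarro--Tiep orbit verbatim (it already satisfies (i)--(iii)) and ask when the extra condition $\theta(1)\ge 7$ could possibly fail. It can fail only if $S$ has some nontrivial irreducible character of degree less than $7$. By Rasala's bound the minimal nontrivial degree of $\Al_n$ is $n-1$ for $n\ge 6$, and by the known lower bounds for low-degree characters of groups of Lie type (L\"{u}beck, Tiep--Zalesskii, Nguyen) together with the Atlas for the sporadic groups, the minimal nontrivial degree is at least $7$ for every nonabelian simple group except $\Al_5$, $\Al_6$, $\Al_7$, $\PSL(2,7)$, $\PSL(2,11)$ and $\PSU(3,3)$. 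Hence for all other $S$ the Navarro--Tiep orbit automatically has $\theta(1)\ge 7$, and the six listed exceptions (including the genuine exception $\Al_5$, for which one takes the degree-$4$ or degree-$5$ character) are settled by direct inspection of the Atlas. No case division by Lie type, no Steinberg character, and no separate treatment of the defining characteristic is needed.
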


\begin{proof}
The orbit $\mathcal{O}$ has been constructed in
\cite[Theorem~3.3]{Navarro-Tiep2}, but without the condition that
$\theta(1)\geq 7$ when $S\ncong \Al_5$. The case $S\cong \Al_5$ is
clear from \cite[p.~2]{Atl1}.

Though by following the proof in \cite{Navarro-Tiep2} one can show
that $\theta(1)\geq 7$ when $S\ncong \Al_5$, we propose here another
way to verify it. First, by \cite{Rasala} the smallest nontrivial
degree of the alternating group $\Al_n$ is $n-1$ when $n\geq 6$.
Together with results on the low-degree characters of simple groups
of Lie type in \cite{Lubeck,Nguyen1,Tiep-Zalesskii} and the
character tables of the sporadic simple groups in \cite{Atl1}, we
can check that if a nonabelian simple group $S$ is not one of
$\Al_5$, $\Al_6$, $\Al_7$, $\PSL(2,7)$, $\PSL(2,11)$, and
$\PSU(3,3)$, then the smallest degree of a nontrivial irreducible
character of $S$ is at least $7$, and thus the condition
$\theta(1)\geq 7$ is automatically satisfied. For the exceptional
groups, the desired orbit can be found easily from \cite{Atl1}.
\end{proof}

Now we use the orbit $\mathcal{O}$ to prove
Theorem~\ref{theorem-psi-p-odd}.

\begin{proof}[Proof of Theorem \ref{theorem-psi-p-odd}] Since $N$ is
a nonabelian minimal normal subgroup of $G$, it is direct product of
$r$ copies of a nonabelian simple group, say $S$. Replacing $G$ by
$G/\bC_G(N)$ if necessary, we may assume that $\bC_G(N)=1$. Then we
have
\[
N\unlhd G \leq \Aut(N)=\Aut(S)\wr \Sy_r.
\]
Let $\theta$ be an irreducible character of $S$ in the orbit
$\mathcal{O}$ found in Lemma~\ref{lemma-orbit}. Consider the
character $\varphi:=\theta\times\cdots\times\theta\in\Irr(N)$. The
stabilizer of $\varphi$ in $\Aut(N)$ is
\[
\Stab_{\Aut(N)}(\varphi)=\Stab_{\Aut(S)}(\theta)\wr \Sy_r.
\]
By the choice of $\mathcal{O}$, $\theta$ extends to a $\QQ_p$-valued
character, say $\alpha$, of $\Stab_{\Aut(S)}(\theta)$. Thus
$\varphi$ extends to the character $\alpha\times\cdots\times\alpha$
of
$\Stab_{\Aut(S)}(\theta)\times\cdots\times\Stab_{\Aut(S)}(\theta)$,
which is the base group of the wreath product
$\Stab_{\Aut(S)}(\theta)\wr \Sy_r$. Since
$\alpha\times\cdots\times\alpha$ is invariant under
$\Stab_{\Aut(N)}(\varphi)$, it follows from
\cite[Lemma~1.3]{Mattarei} that $\alpha\times\cdots\times\alpha$ is
extendible to $\Stab_{\Aut(N)}(\varphi)$. We deduce that $\varphi$
is extendible to $\Stab_{\Aut(N)}(\varphi)$. Let
$\phi\in\Irr(\Stab_{\Aut(N)}(\varphi))$ be an extension of
$\varphi$. By the formula for character values given in
\cite[Lemma~1.3]{Mattarei}, we can choose $\phi$ so that it values
are contained in the field of values of $\alpha$. That is, $\phi$ is
$\QQ_p$-valued.

Now we consider the action of $G$ on the set
\[
\mathcal{C}:=\{\theta_1\times\cdots\times\theta_r\mid
\theta_i\in\mathcal{O}\}
\]
of irreducible characters of $N$. Since the cardinality of this set
is $|\mathcal{O}|^r$, which is not divisible by $p$ by the choice of
$\mathcal{O}$, there must be a $G$-orbit of length coprime to $p$.
Let $\psi\in\mathcal{C}$ be a character in such an orbit. We then
have that $|G:\Stab_G(\psi)|$ is coprime to $p$.

Note that $\Aut(N)$ acts transitively on $\mathcal{C}$. Therefore
there is some $x\in\Aut(N)$ such that $\psi=\varphi^x$, and hence
$\Stab_{\Aut(N)}(\psi)=\Stab_{\Aut(N)}(\varphi)^x$. As
$\phi\in\Irr(\Stab_{\Aut(N)}(\varphi))$ is an extension of
$\varphi$, we deduce that $\phi^x\in\Irr(\Stab_{\Aut(N)}(\psi))$ is
an extension of $\psi$ to $\Stab_{\Aut(N)}(\psi)$. In particular,
$\phi^x\downarrow_{\Stab_G(\psi)}$ is an extension of $\psi$ to
$\Stab_G(\psi)=G\cap \Stab_{\Aut(N)}(\psi)$. We observe that, as
$\phi$ is $\QQ_p$-valued, $\phi^x$ is $\QQ_p$-valued as well.
Finally, we note that $\psi(1)=\theta(1)^r$ is not divisible by $p$
and that $\psi(1)\geq 7$ if $(S,r)\neq (\Al_5,1)$.
\end{proof}


\section{Bounding the number of characters of small
degree}\label{section-bounding-the-number}

We begin the section with the following observation.

\begin{lemma}\label{lemma-n1-n2-n3}
Let $G$ be a finite group and $T\leq G$. Then
\begin{enumerate}
\item[(i)] $n_1(G)\leq n_1(T)|G:T|$,
\item[(ii)] $n_2(G)\leq n_2(T)|G:T|+\frac{1}{2}n_1(T)|G:T|$, and
\item[(iii)] $n_3(G)\leq n_3(T)|G:T|+\frac{1}{3}n_1(T)|G:T|$.
\end{enumerate}
\end{lemma}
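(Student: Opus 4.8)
The plan is to use the standard counting device relating characters of $G$ lying over characters of $T$ via restriction, combined with the fact that a degree-$d$ irreducible character of $G$ restricts to $T$ as a sum of at most $d$ irreducible constituents (counted with multiplicity). First I would recall that for each $\chi\in\Irr(G)$, the restriction $\chi\downarrow_T = \sum_{i} e_i\,\vartheta_i$ with $\vartheta_i\in\Irr(T)$ distinct and $\sum_i e_i\vartheta_i(1) = \chi(1)$; in particular $\chi\downarrow_T$ has at most $\chi(1)$ irreducible constituents counted with multiplicity, and by Frobenius reciprocity $\chi$ is a constituent of $\vartheta_i\uparrow^G$ for each $i$. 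Consequently, if we define, for $\vartheta\in\Irr(T)$, the number $m_\chi(\vartheta) = [\chi\downarrow_T,\vartheta]$, then $\sum_{\vartheta\in\Irr(T)} m_\chi(\vartheta)\vartheta(1) = \chi(1)$, and for a fixed $\vartheta$ we have $\sum_{\chi\in\Irr(G)} m_\chi(\vartheta) = [\vartheta\uparrow^G\downarrow_T,\vartheta]\leq |G:T|$ by the Mackey-type bound (indeed $\vartheta\uparrow^G$ has degree $|G:T|\vartheta(1)$, so $[\vartheta\uparrow^G,\vartheta\uparrow^G]\leq |G:T|$, whence each multiplicity is at most $|G:T|$; more simply $[\vartheta\uparrow^G\downarrow_T,\vartheta]\le |G:T|$).

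For part (i): every $\chi\in\Irr(G)$ with $\chi(1)=1$ restricts irreducibly to a linear character of $T$, i.e. $\chi\downarrow_T\in\Irr(T)$ with $(\chi\downarrow_T)(1)=1$. So the map $\chi\mapsto\chi\downarrow_T$ sends the $n_1(G)$ linear characters of $G$ into $\Irr(T)$, and each linear $\vartheta\in\Irr(T)$ has at most $|G:T|$ preimages (since $\sum_\chi m_\chi(\vartheta)\le |G:T|$ and $m_\chi(\vartheta)\ge 1$ for such a preimage). Hence $n_1(G)\le n_1(T)|G:T|$.

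For parts (ii) and (iii): if $\chi(1)=2$, then $\chi\downarrow_T$ is either a single irreducible of degree $2$, or a sum $\vartheta+\vartheta'$ of two (possibly equal) linear characters of $T$. So $\chi$ lies over at least one character of $T$ that is either irreducible of degree $2$ or linear. Counting: the number of $\chi$ with $\chi(1)=2$ lying over a fixed irreducible $\vartheta\in\Irr(T)$ of degree $2$ is at most $|G:T|$; the number lying over a fixed linear $\vartheta$ — and over \emph{no} degree-$2$ irreducible of $T$ — forces $\chi\downarrow_T=\vartheta+\vartheta'$ with both linear, so $\chi$ contributes to the count for two linear characters of $T$ (or has multiplicity $2$ over one), and the total count $\sum_{\vartheta\in\Irr_1(T)}\#\{\chi: \chi(1)=2,\ \vartheta\mid\chi\downarrow_T\}$ overcounts each such $\chi$ exactly twice, while each summand is $\le n_1(T)|G:T|/\ldots$; carefully, one gets $n_2(G)\le n_2(T)|G:T| + \tfrac12 n_1(T)|G:T|$. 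The degree-$3$ case is identical in structure: a degree-$3$ character of $G$ restricts either with a degree-$3$ constituent, or with a degree-$2$ and a degree-$1$ constituent, or with three linear constituents — in the worst case it is "seen" three times when summing over the linear characters of $T$ it lies above, giving the factor $\tfrac13$. The main obstacle, and the only point needing real care, is the bookkeeping in (ii) and (iii): one must be sure that a degree-$2$ or degree-$3$ character of $G$ is not double-counted in a way that breaks the stated inequality, which is handled by the observation that when no higher-degree constituent of $T$ appears, the character restricts to a sum of exactly $\chi(1)$ linear characters of $T$ (with multiplicity), so dividing the crude bound $n_1(T)|G:T|$ by $\chi(1)\in\{2,3\}$ is exactly what is needed.
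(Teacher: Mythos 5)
Your overall strategy---restrict a character of small degree to $T$, split into cases according to the degrees of the constituents of $\chi\downarrow_T$, and count using Frobenius reciprocity together with the degree of the induced character---is the same as the paper's, and your parts (i) and (ii) go through: in (ii) the ``counted twice'' bookkeeping is legitimate provided linear constituents are counted \emph{with multiplicity} (as your last sentence indicates), since a degree-$2$ character of $G$ with no degree-$2$ constituent on restriction is a sum of exactly two linear characters of $T$ with multiplicity.

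The genuine gap is in (iii), precisely at the bookkeeping you flagged. A degree-$3$ character $\chi$ whose restriction is reducible need not be a sum of three linear characters: it may be (linear) $+$ (degree $2$). Such a $\chi$ is ``seen'' only once, with multiplicity one, when you sum over the linear characters of $T$ it lies above, so the rationale ``it is counted $\chi(1)$ times, hence divide the crude bound by $\chi(1)$'' does not apply to it; and your closing sentence explicitly restricts the division argument to characters with no higher-degree constituent, so the mixed case is simply not covered. It cannot be absorbed elsewhere either, because the asserted inequality (iii) has no $n_2(T)$ term. The paper closes this by using the other mechanism, the one you yourself used for the degree-$2$ constituents in (ii): for a \emph{fixed} linear $\phi\in\Irr(T)$, the induced character $\phi^G$ has degree $|G:T|$, hence has at most $|G:T|/3$ irreducible constituents of degree $3$; and every degree-$3$ character of $G$ with reducible restriction has at least one linear constituent (its constituent degrees form $1+1+1$ or $1+2$), so it is a constituent of some such $\phi^G$. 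Summing over the $n_1(T)$ linear characters of $T$ gives the term $\frac{1}{3}n_1(T)|G:T|$ with no division-by-overcounting needed at all, and the same per-$\phi$ degree count also streamlines your argument for (ii).
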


\begin{proof}
(i) This is clear since $n_1(G)=|G:G'|$ and $n_1(T)=|T:T'|$.

(ii) Let $\chi\in\Irr(G)$ with $\chi(1)=2$. Take $\phi$ to be an
irreducible constituent of $\chi\downarrow_T$. Frobenius reciprocity
then implies that $\chi$ in turn is an irreducible constituent of
$\phi^G$. If $\phi(1)=2$ then, as $\phi^G(1)=2|G:T|$, there are at
most $|G:T|$ irreducible constituents of degree $2$ of $\phi^G$. We
deduce that there are at most $n_2(T)|G:T|$ irreducible characters
of degree $2$ of $G$ that arise as constituents of $\phi^G$ with
$\phi(1)=2$.

On the other hand, if $\phi(1)=1$ then, as $\phi^G(1)=|G:T|$, there
are at most $|G:T|/2$ irreducible constituents of degree $2$ of
$\phi^G$. As above, we deduce that there are at most $n_1(T)|G:T|/2$
irreducible characters of degree $2$ of $G$ that arise as
constituents of $\phi^G$ with $\phi(1)=1$. Now (ii) is proved.

(iii) This can be argued similarly as in (ii). Let $\chi\in\Irr(G)$
with $\chi(1)=3$. Then $\chi\downarrow_T$ is either ireeducible, or
a sum of three linear characters of $T$, or a sum of one linear
character and one irreducible character of degree $2$ of $T$. In
particular, if $\chi\downarrow_T$ is reducible then there is always
a linear constituent in $\chi\downarrow_T$. Now we see that there
are at most $n_3(T)|G:T|$ irreducible characters of degree $3$ of
$G$ that arise as constituents of $\phi^G$ with $\phi(1)=3$ and
there are at most $n_1(T)|G:T|/3$ irreducible characters of degree
$3$ of $G$ that arise as constituents of $\phi^G$ with $\phi(1)=1$.
The proof is complete.
\end{proof}

Lemma~\ref{lemma-n1-n2-n3} can help us to bound $n_1(G)$, $n_2(G)$,
and $n_3(G)$ in terms of the number of irreducible characters of
larger degree, especially in the case $G$ has a nonabelian minimal
normal subgroup.

\begin{proposition}\label{proposition-n1-n2}
Let $G$ be a finite group with a nonabelian minimal normal subgroup
$N$. Assume that there is some $\psi\in\Irr(N)$ such that $\psi$ is
extendible to $\Stab_G(\psi)$ and let $T:=\Stab_G(\psi)$ and
$a:=\psi(1)|G:T|$. We have
\begin{enumerate}
\item[(i)] $n_1(G)\leq n_a(G)|G:T|$, and
\item[(ii)] $n_2(G)\leq n_{2a}(G)|G:T|+\frac{1}{2}n_a(G)|G:T|$,
\end{enumerate}
Moreover, if $G=T$ then $n_2(G)\leq n_{2a}(G)$.
\end{proposition}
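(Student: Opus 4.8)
The plan is to combine the Clifford correspondence relative to $\psi$ with Lemma~\ref{lemma-n1-n2-n3}, applied to the subgroup $T=\Stab_G(\psi)$. First I would record two elementary facts about $N$. Since $N$ is a nonabelian minimal normal subgroup of $G$, it is a direct product of nonabelian simple groups, hence perfect; so $N\leq H'$ whenever $N\leq H\leq G$, and every linear character of such an $H$ is trivial on $N$, giving $n_1(H)=n_1(H/N)$. Also, no nonabelian simple group has a nontrivial irreducible character of degree at most $2$ (otherwise its image would be a nonabelian simple subgroup of $\SL(2,\CC)$, which does not exist), so every nontrivial irreducible character of $N$ has degree at least $3$; by Clifford's theorem this forces every irreducible character of $H$ of degree at most $2$ to be trivial on $N$, so $n_2(H)=n_2(H/N)$ as well. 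Thus in degrees $1$ and $2$ one may pass freely between $G$, $T$, $G/N$ and $T/N$.

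Next I would set up Clifford theory over $\psi$. Let $\widehat{\psi}\in\Irr(T)$ be an extension of $\psi$, which exists by hypothesis. By Gallagher's theorem, $\beta\mapsto\widehat{\psi}\beta$ is a bijection from $\Irr(T/N)$ onto the set of irreducible characters of $T$ lying over $\psi$; following it by induction---a bijection onto $\Irr(G|\psi)$, the set of irreducible characters of $G$ lying over $\psi$, because $T=\Stab_G(\psi)$---produces a bijection $\beta\mapsto(\widehat{\psi}\beta)^G$ from $\Irr(T/N)$ onto $\Irr(G|\psi)$ with $(\widehat{\psi}\beta)^G(1)=\psi(1)\beta(1)|G:T|=a\,\beta(1)$. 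Restricting this bijection to linear $\beta$ shows $n_a(G)\geq n_1(T/N)=n_1(T)$, and restricting to $\beta$ of degree $2$ shows $n_{2a}(G)\geq n_2(T/N)=n_2(T)$.

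With these inequalities in hand, (i) and (ii) drop out of Lemma~\ref{lemma-n1-n2-n3}: part (i) there gives $n_1(G)\leq n_1(T)|G:T|\leq n_a(G)|G:T|$, and part (ii) gives $n_2(G)\leq n_2(T)|G:T|+\frac{1}{2}n_1(T)|G:T|\leq n_{2a}(G)|G:T|+\frac{1}{2}n_a(G)|G:T|$. For the last statement, if $G=T$ then $|G:T|=1$ and $a=\psi(1)$, and the bijection above restricts to one between the degree-$2a$ characters in $\Irr(G|\psi)$ and the degree-$2$ characters of $G/N$; hence $n_{2a}(G)\geq n_2(G/N)=n_2(G)$ by the first paragraph.

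I do not foresee a genuine obstacle here: once the Gallagher--Clifford bijection is in place the rest is bookkeeping. The only points needing care are the two low-degree facts of the first paragraph---especially that irreducible characters of $N$ (and hence of $G$, $T$ and their quotients by $N$) of degree at most $2$ are trivial on $N$---and the observation that, for nontrivial $\psi$, the quantities $n_a(G)$ and $n_{2a}(G)$ are \emph{not} functions of $G/N$ alone. This last point is exactly why the argument must route through $T=\Stab_G(\psi)$ and invoke Lemma~\ref{lemma-n1-n2-n3}, rather than comparing $G$ with $G/N$ directly.
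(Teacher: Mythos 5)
Your proof is correct and follows essentially the same route as the paper's: apply Lemma~\ref{lemma-n1-n2-n3} to $T=\Stab_G(\psi)$, note that characters of degree at most $2$ of $T$ are trivial on $N$, and use the Gallagher--Clifford correspondence over the extension of $\psi$ to get $n_1(T)\leq n_a(G)$ and $n_2(T)\leq n_{2a}(G)$ (the latter also giving the $G=T$ statement). The only cosmetic difference is that you justify the absence of degree-$2$ characters of $N$ via $\SL(2,\CC)$, where the paper cites Problem~3.3 of \cite{Isaacs1}.
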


\begin{proof}
First, by Lemma~\ref{lemma-n1-n2-n3}(i) we have $n_1(G)\leq
n_1(T)|G:T|$. On the other hand, as $N=N'\subseteq T'$, $N$ is
contained in the kernel of every linear character of $T$ so that
$n_1(T)=n_1(T/N)$. It follows that
\[
n_1(G)\leq n_1(T/N)|G:T|.
\]

Recall that $\psi\in\Irr(N)$ is extendible to $T$ and so we let
$\chi\in\Irr(T)$ be an extension of $\psi$. Using Gallagher's
theorem and Clifford's theorem (see \cite[Corollary~6.17 and
Theorem~6.11]{Isaacs1}), we see that each linear character $\lambda$
of $T/N$ produces the irreducible character $\lambda\chi$ of $T$ of
degree $\psi(1)$, and this character in turn produces the
irreducible character $(\lambda\chi)^G$ of $G$ of degree
$(\lambda\chi)^G(1)=\psi(1)|G:T|=a$. It follows that \[n_1(T/N)\leq
n_{a}(G)\] and we therefore have
\[
n_1(G)\leq n_{a}(G)|G:T|,
\]
as claimed in (i).

We now prove (ii). By Lemma~\ref{lemma-n1-n2-n3}(ii) we have
$n_2(G)\leq n_2(T)|G:T|+\frac{1}{2}n_1(T)|G:T|$. Since we have
already proved that $n_1(T)=n_1(T/N)\leq n_a(G)$, it remains to
prove that $n_2(T)\leq n_{2a}(G)$.

We claim that $n_2(T)=n_2(T/N)$ or in other words $N$ is contained
in the kernel of every irreducible character of degree $2$ of $T$.
Let $\phi\in\Irr(T)$ with $\phi(1)=2$. Since $N$ has no irreducible
character of degree 2 (see Problem~3.3 of~\cite{Isaacs1}) and has
only one linear character, which is the trivial one, it follows that
$\phi_N=2\cdot1_N$. We then have $N\subseteq \Ker(\phi)$, as
claimed.

Recall that $\chi\in\Irr(T)$ is an extension of $\psi$. Using
Gallagher's theorem and Clifford's theorem again, we obtain that
each irreducible character $\mu\in\Irr(T/N)$ of degree $2$ produces
the character $(\mu\chi)^G\in\Irr(G)$ of degree
$(\mu\chi)^G(1)=2\psi(1)|G:T|=2a$. It follows that
\[n_2(T/N)\leq n_{2a}(G),\] and thus $n_2(T)\leq n_{2a}(G)$, as wanted.
\end{proof}

\begin{proposition}\label{proposition-n3}
Let $G$ be a finite group with a nonabelian minimal normal subgroup
$N$, which has no direct factor isomorphic to $\Al_5$ or
$\PSL(2,7)$. Assume that there is some $\psi\in\Irr(G)$ such that
$\psi$ is extendible to $\Stab_G(\psi)$ and let $T:=\Stab_G(\psi)$
and $a:=\psi(1)|G:T|$. Then \[n_3(G)\leq
n_{3a}(G)|G:T|+\frac{1}{3}n_a(G)|G:T|.\] Moreover, if $G=T$ then
$n_3(G)\leq n_{3a}(G)$.
\end{proposition}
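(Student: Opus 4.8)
The plan is to mimic the proof of Proposition~\ref{proposition-n1-n2}, using Lemma~\ref{lemma-n1-n2-n3}(iii) as the starting point. Applying that lemma with $T:=\Stab_G(\psi)$ gives $n_3(G)\leq n_3(T)|G:T|+\frac{1}{3}n_1(T)|G:T|$. As in the previous proposition, $N=N'\subseteq T'$ forces $N$ into the kernel of every linear character of $T$, so $n_1(T)=n_1(T/N)$; and by the Gallagher/Clifford argument already used, the linear characters of $T/N$ multiplied by an extension $\chi\in\Irr(T)$ of $\psi$ induce distinct irreducible characters of $G$ of degree $\psi(1)|G:T|=a$, whence $n_1(T)\leq n_a(G)$. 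So it suffices to prove $n_3(T)\leq n_{3a}(G)$, and the whole content of the proposition lies there.

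The key step is therefore to show $n_3(T)=n_3(T/N)$, i.e.\ that $N$ lies in the kernel of every degree-$3$ irreducible character of $T$. Let $\phi\in\Irr(T)$ with $\phi(1)=3$ and consider $\phi_N$. Since $N$ is a direct product of copies of a nonabelian simple group $S$, Clifford theory says $\phi_N=e(\theta_1+\cdots+\theta_t)$ with the $\theta_i$ a $T$-orbit of irreducible characters of $N$, all of the same degree $d$, and $3=edt$. The degree of a nontrivial irreducible character of $N$ is $\geq 5$ (the smallest nontrivial degree of $\Al_5$ and of $\PSL(2,7)$ is $3$, which is exactly why these two groups are excluded; every other nonabelian simple group, hence every allowed $N$, has no nontrivial irreducible character of degree $\leq 3$). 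Therefore $d=1$, forcing $\theta_i=1_N$ and $\phi_N=3\cdot 1_N$, so $N\subseteq\Ker(\phi)$. This gives $n_3(T)=n_3(T/N)$.

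Finally, with $\chi\in\Irr(T)$ the chosen extension of $\psi$, Gallagher's theorem says that for each $\mu\in\Irr(T/N)$ the product $\mu\chi$ is irreducible of degree $3\psi(1)$ when $\mu(1)=3$, and Clifford's theorem (induction from the stabilizer) shows $(\mu\chi)^G\in\Irr(G)$ has degree $3\psi(1)|G:T|=3a$; distinct $\mu$ give distinct $(\mu\chi)^G$. Hence $n_3(T/N)\leq n_{3a}(G)$, so $n_3(T)\leq n_{3a}(G)$, completing the general bound. When $G=T$ the factor $|G:T|=1$ and $a=\psi(1)$, and the displayed inequality collapses to $n_3(G)\leq n_{3\psi(1)}(G)=n_{3a}(G)$, giving the ``moreover'' statement. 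The only mild obstacle is the degree bookkeeping: one must make sure that the hypothesis excluding $\Al_5$ and $\PSL(2,7)$ from the direct factors of $N$ is exactly what is needed to rule out a degree-$3$ nontrivial constituent of $\phi_N$, and that the Gallagher products for $\mu(1)=3$ do not collide with those coming from $\mu(1)=1$ (they do not, since the latter have degree $a$ rather than $3a$).
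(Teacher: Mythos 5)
Your proof is correct and follows essentially the same route as the paper: Lemma~\ref{lemma-n1-n2-n3}(iii), the bound $n_1(T)\leq n_a(G)$ as in Proposition~\ref{proposition-n1-n2}, the observation that excluding $\Al_5$ and $\PSL(2,7)$ forces every degree-$3$ irreducible character of $T$ to contain $N$ in its kernel (only the absence of degree-$3$ constituents is needed, so your ``$\geq 5$'' overstatement is harmless), and the Gallagher--Clifford injection giving $n_3(T/N)\leq n_{3a}(G)$. One small correction to your wrap-up: when $G=T$ the displayed inequality does not literally collapse to $n_3(G)\leq n_{3a}(G)$ (the term $\frac{1}{3}n_a(G)$ remains); the ``moreover'' statement instead follows immediately from the relation $n_3(G)=n_3(T)\leq n_{3a}(G)$, which you have already established.
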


\begin{proof}
By Lemma~\ref{lemma-n1-n2-n3}(iii) we have $n_3(G)\leq
n_3(T)|G:T|+\frac{1}{3}n_1(T)|G:T|$. As in the proof of
Proposition~\ref{proposition-n1-n2}, we have $n_1(T)\leq n_a(G)$. So
it suffices to show that $n_3(T)\leq n_{3a}(G)$.

It is well known that $\Al_5$ and $\PSL(2,7)$ are the only finite
simple groups having an irreducible character of degree $3$.
Therefore, every nontrivial irreducible character of $N$ has degree
at least 4 and, by using the same arguments as in the proof of
Proposition~\ref{proposition-n1-n2}, we see that $N$ is contained in
the kernel of every irreducible character of degree 3 of $T$. In
other words we have $n_3(T)=n_3(T/N)$.

Recall that $\psi$ is extendible to $T$ and let $\chi\in\Irr(T)$ be
an extension of $\psi$. We then obtain an injection $\nu\mapsto
(\nu\chi)^G$ from the set of irreducible characters of $T/N$ of
degree $3$ to the set of irreducible characters of $G$ of degree
$(\nu\chi)^G(1)=3\psi(1)|G:T|=3a$. It follows that $n_3(T/N)\leq
n_{3a}(G),$ and therefore $n_3(T)\leq n_{3a}(G)$, which completes
the proof.
\end{proof}


\section{Characters of a central
product}\label{section-central-product}

The proof of Theorem~\ref{theorem-main-2} requires us to analyze the
characters of a particular central product. This central product
indeed has already appeared in the study of the average of
\emph{all} irreducible character degrees of a finite group,
see~\cite{Isaacs-Loukaki-Moreto,Moreto-Nguyen}.

\begin{proposition}\label{proposition-central-product}
Let $L\cong \SL(2,5)$ and $G=LC$ be a central product with the
central amalgamated subgroup $Z:=\bZ(L)=L\cap C$ such that
$L\subseteq G'$. Assume that $G$ has an irreducible character of
degree 2 such that $Z\nsubseteq \Ker(\chi)$. Then
\begin{enumerate}
\item[(i)] $n_2(G)=2n_1(G)+n_2(C/Z)$,
\item[(ii)] $n_3(G)\geq 2n_1(G)$,
\item[(iii)] $n_4(G)\geq 2n_1(G)$,
\item[(iv)] $n_5(G)\geq n_1(G)$,
\item[(v)] $n_6(G)\geq n_1(G)$, and
\item[(vi)] $n_8(G)\geq n_2(C/N)$.
\end{enumerate}
\end{proposition}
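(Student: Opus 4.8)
The plan is to read off every $n_d(G)$ we need directly from the character theory of the central product $G=LC$. Since $[L,C]=1$ and $L\cap C=Z$, multiplication $L\times C\to G$ is a surjection with kernel $D=\{(z,z^{-1}):z\in Z\}\cong Z$, so $\Irr(G)$ is in bijection with the set of pairs $(\alpha,\beta)\in\Irr(L)\times\Irr(C)$ whose central characters restrict to the same character of $Z$; as $|Z|=2$ this simply says $Z\subseteq\Ker\alpha\iff Z\subseteq\Ker\beta$. The irreducible character $\chi=\alpha\cdot\beta$ of $G$ attached to such a pair has degree $\alpha(1)\beta(1)$ and satisfies $Z\subseteq\Ker\chi\iff Z\subseteq\Ker\alpha$. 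Now $\Irr(\SL(2,5))$ consists of the characters of $L/Z\cong\Al_5$, of degrees $1,3,3,4,5$, together with the faithful characters of $L$, of degrees $2,2,4,6$. Hence every $\chi\in\Irr(G)$ has exactly one of the two shapes $\alpha\cdot\rho$ with $\alpha\in\Irr(\Al_5)$ and $\rho\in\Irr(C/Z)$, or $\alpha\cdot\beta$ with $\alpha$ faithful on $L$ and $\beta\in\Irr(C)$ nontrivial on $Z$.

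Next I would record two counting facts. Since $L\subseteq G'$, every linear character of $G$ is trivial on $L$ and hence inflated from $G/L\cong C/Z$, so $n_1(G)=n_1(G/L)=n_1(C/Z)$. And the hypothesis that $G$ has some $\chi\in\Irr(G)$ with $\chi(1)=2$ and $Z\nsubseteq\Ker\chi$ forces, via the dictionary above ($Z\nsubseteq\Ker\chi$ means $Z\nsubseteq\Ker\alpha$, so $\alpha$ is faithful and $\alpha(1)\ge 2$, whence $\alpha(1)=2$ and $\beta(1)=1$), that $C$ has a linear character $\beta$ with $Z\nsubseteq\Ker\beta$; thus $Z\nsubseteq C'$, so $|C:C'|=2|C:C'Z|$ and the number of linear characters of $C$ nontrivial on $Z$ equals $|C:C'Z|=n_1(C/Z)=n_1(G)$. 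I will write $k:=n_1(G)$ for this common value.

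Then I would run through $d=2,3,4,5,6,8$: for each $d$, list the factorizations $d=\alpha(1)\beta(1)$ admissible for the two shapes above (using the degree lists $1,3,3,4,5$ for $\Irr(\Al_5)$ and $2,2,4,6$ for the faithful characters of $L$), count the resulting characters of $G$, and keep the relevant contributions. For $d=2$ the only possibilities are $\alpha=1_L$ with $\rho\in\Irr(C/Z)$ of degree $2$, contributing $n_2(C/Z)$, and $\alpha$ one of the two faithful degree-$2$ characters of $L$ with $\beta$ one of the $k$ linear characters of $C$ off $Z$, contributing $2k$; this yields $n_2(G)=2n_1(G)+n_2(C/Z)$ exactly, i.e.\ (i). The same count gives $n_3(G)=n_3(C/Z)+2k$, $n_4(G)=n_4(C/Z)+2k+2r_2$, $n_5(G)=n_5(C/Z)+k$, $n_6(G)=n_6(C/Z)+2n_2(C/Z)+2r_3+k$, and $n_8(G)=n_8(C/Z)+n_2(C/Z)+r_2+2r_4$, where $r_j$ denotes the number of degree-$j$ irreducible characters of $C$ not containing $Z$ in their kernel; discarding nonnegative terms gives (ii)--(vi) (with $n_2(C/N)$ in (vi) understood as $n_2(C/Z)$).

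The substance of the argument is this last step, and the one point needing care is to invoke the hypothesis exactly where it is needed, namely to guarantee both that $C$ has linear characters off $Z$ and that there are precisely $n_1(G)$ of them. The remainder is the small finite bookkeeping of the factorizations of $2,3,4,5,6,8$ against the degree list of $\SL(2,5)$, and I foresee no real obstacle there.
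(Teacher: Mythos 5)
Your proof is correct and follows essentially the same route as the paper: decompose $\Irr(G)$ into compatible pairs $(\alpha,\beta)\in\Irr(L)\times\Irr(C)$ (the paper phrases this as the two bijections $\Irr(L|Z)\times\Irr(C|Z)\to\Irr(G|Z)$ and $\Irr(L/Z)\times\Irr(C/Z)\to\Irr(G/Z)$), use the degree-$2$ character nontrivial on $Z$ to produce exactly $n_1(G)$ linear characters of $C$ off $Z$, and then count factorizations of each degree against the degree list of $\SL(2,5)$. Your bookkeeping is in fact slightly more explicit than the paper's (which leaves case (ii) implicit and obtains the count $n_1(C|Z)=n_1(C/Z)$ via Gallagher rather than via $Z\cap C'=1$), and you correctly read the statement's $C/N$ in (vi) as $C/Z$.
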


\begin{proof}
Since $G=LC$ is a central product with the central amalgamated
subgroup $Z$, there is a bijection $(\alpha,\beta)\mapsto \tau$ from
$\Irr(L|Z)\times\Irr(C|Z)$ to $\Irr(G|Z)$ such that
$\tau(1)=\alpha(1)\beta(1)$.

By hypothesis, there is $\chi\in\Irr(G|Z)$ such that $\chi(1)=2$. If
$(\alpha,\beta)\mapsto \chi$ under the above bijection, we must have
$\beta(1)=1$ since $L\cong\SL(2,5)$ and there are only three
possibilities for $\alpha(1)$, namely 2, 4, and 6. So
$\beta\in\Irr(C|Z)$ is an extension of the unique nonprincipal
linear character of $Z$. Using Gallagher's theorem, we then have a
degree-preserving bijection from $\Irr(C/Z)$ to $\Irr(C|Z)$. In
particular,
\[n_1(C|Z)=n_1(C/Z)\]
Since $G/L\cong C/Z$ and $L\subseteq G'$, we have
\[n_1(C|Z)=n_1(C/Z)=n_1(G).\]

Now we evaluate $n_2(G)$. As $n_1(L|Z)=0$ and $n_2(L|Z)=2$, we have
\[n_2(G|Z)=n_1(L|Z)n_2(C|Z)+n_2(L|Z)n_1(C|Z)=2n_1(C|Z)=2n_1(G).\]
Note that there is also a bijection $(\alpha,\beta)\mapsto \tau$
from $\Irr(L/Z)\times\Irr(C/Z)$ to $\Irr(G/Z)$ such that
$\tau(1)=\alpha(1)\beta(1)$. Therefore, as $n_1(L/Z)=1$ and
$n_2(L/Z)=0$, we have
\[n_2(G/Z)=n_1(L/Z)n_2(C/Z)+n_2(L/Z)n_1(C/Z)=n_2(C/Z),\] and it follows
that
\[n_2(G)=n_2(G/Z)+n_2(G|Z)=n_2(C/Z)+2n_1(G).\]

Next we estimate $n_4(G)$, $n_5(G)$, and $n_6(G)$. We have
\[n_4(G|Z)\geq n_4(L|Z)n_1(C|Z)\geq n_1(C|Z)=n_1(G)\] since
$n_4(L|Z)=1$ and
\[n_4(G/Z)\geq n_4(L/Z)n_1(C/Z)=n_1(C/Z)=n_1(G)\] since
$n_4(L/Z)=1$.
We deduce that
\[n_4(G)=n_4(G|Z)+n_4(G/Z)\geq 2n_1(G).\]
Similarly,
\[n_5(G)\geq n_5(G/Z)\geq a_5(L/Z)a_1(C/Z)=a_1(C/Z)=n_1(G)\] since $n_5(L/Z)=1$, and
\[n_6(G)\geq n_6(G|Z)\geq n_6(L|Z)n_1(C|Z)=n_1(G)\] since
$n_6(L|Z)=1$.

Finally, we estimate $n_8(G)$ by
\[n_8(G)\geq n_8(G/N)\geq n_4(L/N)n_2(C/N)\geq n_2(C/N),\]
and we have completed the proof.
\end{proof}


\section{Characters of odd degree and
solvability}\label{section-p=2}

In this section we prove Theorem~\ref{theorem-main-2}(i), which we
restate below for the reader's convenience.

\begin{theorem}\label{theorem-odd<3-then-G-solvable}
Let $G$ be a finite group. If $\acd_{2'}(G)<3$ then $G$ is solvable.
\end{theorem}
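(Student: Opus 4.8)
The plan is to argue by minimal counterexample: let $G$ be a nonsolvable group of smallest order with $\acd_{2'}(G)<3$, and derive a contradiction. First I would record the easy reduction that $G$ has a nonabelian minimal normal subgroup $N$. Indeed, if $\bO_{\infty}(G)\neq 1$ were a nontrivial solvable normal subgroup, one would want to pass to a quotient; here one must be a little careful, because $\acd_{2'}$ does not behave monotonically under quotients in general. The standard device (used in \cite{Moreto-Nguyen}) is that characters of $G/M$ of $2'$-degree inflate to characters of $G$ of $2'$-degree, so $\Irr_{2'}(G/M)\subseteq\Irr_{2'}(G)$, and combined with a counting of the extra characters lying over nontrivial characters of an abelian chief factor one controls how the average can only grow; this lets me reduce to the case where every minimal normal subgroup of $G$ is nonabelian. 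Fix such an $N\cong S^r$ with $S$ nonabelian simple.

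The heart of the argument is to feed $N$ into Theorem~\ref{theorem-psi-p-odd} with $p=2$. If $N\ncong\Al_5$ this produces $\psi\in\Irr(N)$ with $\psi(1)\geq 7$ odd, extendible to $T:=\Stab_G(\psi)$, with $|G:T|$ odd; set $a:=\psi(1)|G:T|\geq 7$, an odd integer. Now Propositions~\ref{proposition-n1-n2} and~\ref{proposition-n3} give
\[
n_1(G)\leq n_a(G)|G:T|,\qquad n_2(G)\leq n_{2a}(G)|G:T|+\tfrac12 n_a(G)|G:T|,\qquad n_3(G)\leq n_{3a}(G)|G:T|+\tfrac13 n_a(G)|G:T|,
\]
and since $a$, $3a$ are odd and $a\geq 7$, the characters counted by $n_a(G),n_{3a}(G)$ all lie in $\Irr_{2'}(G)$ and have degree $\geq 7$. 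I would then estimate
\[
\acd_{2'}(G)\;=\;\frac{\sum_{\chi\in\Irr_{2'}(G)}\chi(1)}{|\Irr_{2'}(G)|}
\]
from below: the linear characters contribute weight $n_1(G)$ to the numerator but, via the inequalities above, there are at least $n_1(G)/|G:T|\geq n_1(G)/(\text{odd}\geq 1)$ compensating characters of degree $a\geq 7$ (and similarly for degrees $2$ and $3$), which drags the average up. Balancing $n_1(G)$ against $n_a(G)$ and throwing in the odd-degree characters of degree $5,7,\dots$ forced by $N$ itself, one pushes $\acd_{2'}(G)\geq 3$, the contradiction. The case $N\cong\Al_5$ (and the case where $N$ has an $\Al_5$ or $\PSL(2,7)$ direct factor, excluded from Proposition~\ref{proposition-n3}) must be handled separately: here I would use the explicit odd-degree character degrees of $\Al_5$ (namely $1,3,5$) and of $\PSL(2,7)$, together with the central-product analysis of Section~\ref{section-central-product} — Proposition~\ref{proposition-central-product} is tailored precisely to the $L\cong\SL(2,5)$ situation that arises when a perfect central extension of $\Al_5$ sits inside $G$ — to run the same averaging estimate by hand.

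The main obstacle, as the paper itself flags, is that solvability is the deep step, and within it the genuinely delicate point is not the generic simple group $S$ (where the degree $\geq 7$ bound of Theorem~\ref{theorem-psi-p-odd} gives enormous slack) but the small nonsolvable configurations built from $\Al_5$, $\SL(2,5)$, and $\PSL(2,7)$, which sit right at the boundary $\acd_{2'}=3$ (witnessed by $\Al_5$ and $\SL(2,5)$ themselves). For these one cannot afford lossy estimates: one needs the exact count $n_2(G)=2n_1(G)+n_2(C/Z)$ and the lower bounds $n_3(G),n_4(G),n_5(G),n_6(G),n_8(G)\geq (\text{const})\cdot n_1(G)$ from Proposition~\ref{proposition-central-product}, plugged into the definition of $\acd_{2'}$, to show that even the extremal central products have average $2'$-degree at least $3$ — with equality forcing $G$ to have the structure of $\SL(2,5)$ (or $\Al_5$) up to a solvable normal piece, which is not a counterexample. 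Assembling these cases and checking the arithmetic of the averaging inequality is the bulk of the work; the structural input is essentially all in the preceding sections.
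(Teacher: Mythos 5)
Your core route is the paper's: minimal counterexample, then Theorem~\ref{theorem-psi-p-odd} plus Proposition~\ref{proposition-n1-n2}(i) to trade linear characters for characters of large odd degree. But for $p=2$ most of your plan is unnecessary, and parts of it could not be executed as described. Since only odd degrees enter $\acd_{2'}$, the hypothesis $\acd_{2'}(G)<3$ is equivalent to $\sum_{d\geq 5,\ d\ \odd}(d-3)\,n_d(G)<2n_1(G)$; characters of degree $2$ never appear and characters of degree $3$ carry weight zero. Consequently you never need Proposition~\ref{proposition-n1-n2}(ii), Proposition~\ref{proposition-n3}, or the central-product analysis of Section~\ref{section-central-product}: Proposition~\ref{proposition-central-product} requires a degree-$2$ character of $G$ not containing $Z$ in its kernel, and the $2'$-degree hypothesis can never force such a character into play, so that case simply does not arise here (it is needed for $p=3,5$ and $p>5$, where degrees $2$ and $3$ have positive weight). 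Likewise $\PSL(2,7)$ needs no special treatment: Theorem~\ref{theorem-psi-p-odd} applies to it and gives $\psi(1)\geq 7$ odd; the only special case is $N\cong\Al_5$, handled by taking its degree-$5$ character, which is extendible and has odd stabilizer index.

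Two steps of your plan also need to be made precise, because as stated they are gaps. First, the reduction: the claim that ``the average can only grow'' when adding the characters over a nontrivial abelian chief factor is not justified and is false in general if that chief factor is not contained in $G'$ (then $G$ has strictly more linear characters than the quotient, which can pull $\acd_{2'}$ down). The correct, and immediate, fix is the paper's: choose the minimal normal subgroup $N\subseteq G'$, so $n_1(G)=n_1(G/N)$ while $n_d(G/N)\leq n_d(G)$ for all $d$, and the displayed inequality passes to $G/N$ with no case analysis; minimality then forces $N$ nonabelian. Second, the final ``balancing'' is not a delicate estimate but a one-line computation you should write out: with $T=\Stab_G(\psi)$ and $a=\psi(1)|G:T|$ odd, $\psi(1)\geq 5$ gives $a\geq 5|G:T|$, hence $|G:T|\leq\tfrac12(a-3)$ and $n_1(G)\leq n_a(G)|G:T|\leq\tfrac12(a-3)n_a(G)\leq\tfrac12\sum_{d\geq 5,\ d\ \odd}(d-3)n_d(G)$, contradicting the displayed inequality. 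With these corrections your argument becomes exactly the paper's proof; without them, the reduction step and the concluding estimate are not yet proofs.
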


\begin{proof}
Assume that the theorem is false, and let $G$ be a minimal
counterexample. In particular, $G$ is nonsolvable and
$\acd_{2'}(G)<3$. Then we have
\[
\frac{\sum_{d~\odd} dn_d(G)}{\sum_{d~\odd} n_d(G)}<3,
\]
and hence
\[
\sum_{d\geq 5~\odd}(d-3)n_d(G)<2n_1(G).
\]
Since $G$ is nonsolvable, $G'$ is nontrivial and therefore we can
choose a minimal normal subgroup $N$ of $G$ such that $N\subseteq
G'$. So $N$ is contained in the kernel of every linear character of
$G$ so that $n_1(G)=n_1(G/N)$. Therefore
\[ \sum_{d\geq 5~\odd}(d-3)n_d(G/N)\leq
\sum_{d\geq 5~\odd}(d-3)n_d(G)<2n_1(G)=2n_1(G/N)
\]
and it follows that
\[\acd_{2'}(G/N)<3.
\]
By the minimality of $G$, we deduce that $G/N$ is solvable. But $G$
is nonsolvable, so $N$ is a nonabelian minimal normal subgroup of
$G$. Theorem~\ref{theorem-psi-p-odd} then implies that $N$ has an
irreducible character $\psi$ with three properties:
\begin{enumerate}
\item[(i)] $\psi(1)\geq 5$ is odd,
\item[(ii)] $\psi$ is extendible to $\Stab_G(\psi)$, and
\item[(iii)] $|G:\Stab_G(\psi)|$ is odd.
\end{enumerate}
(In Theorem~\ref{theorem-psi-p-odd} we in fact assume that
$N\ncong\Al_5$. But one easily sees that if $N\cong \Al_5$ then the
character $\psi$ can be chosen to be the unique irreducible
character of degree 5 of $N$.)

Now applying Proposition~\ref{proposition-n1-n2}(i), we have
\[
n_1(G)\leq n_a(G)|G:T|,
\]
where $T:=\Stab_G(\psi)$ and $a:=\psi(1)|G:T|$. As $\psi(1)\geq 5$,
we have $a\geq 5|G:T|$ and it follows that
\[
n_1(G)\leq \frac{1}{2}n_{a}(G) (a-3).
\]
Using the fact that $a=\psi(1)|G:T|$ is odd by the choice of $\psi$,
we arrive at
\[
n_1(G)\leq \frac{1}{2}\sum_{d\geq 5~\odd}(d-3)n_d(G)
\]
and, equivalently, $\acd_{2'}(G)\geq 3$. This contradiction
completes the proof.
\end{proof}


\section{Characters of $3'$-degree and solvability}\label{section-p=3}

In this section we prove Theorem~\ref{theorem-main-2}(ii).

\begin{theorem}\label{theorem-3'<3-then-G-solvable}
Let $G$ be a finite group. If $\acd_{3'}(G)<3$ then $G$ is solvable.
\end{theorem}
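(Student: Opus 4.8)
The plan is to mimic the proof of Theorem~\ref{theorem-odd<3-then-G-solvable} closely, taking $G$ to be a minimal counterexample: $G$ is nonsolvable with $\acd_{3'}(G)<3$. Writing out the inequality $\acd_{3'}(G)<3$ as $\sum_{3\nmid d}(d-3)n_d(G)<2n_1(G)$, the negative contribution now comes from characters of degree $1$ and $2$ (since $d=3$ is excluded and contributes nothing, and $d-3\geq 1$ for all $3'$-degrees $d\geq 4$). So the key inequality to exploit is
\[
\sum_{d\geq 4,\,3\nmid d}(d-3)n_d(G) < 2n_1(G)+n_2(G).
\]
As before, pick a minimal normal subgroup $N\subseteq G'$; then $n_1(G)=n_1(G/N)$, and since the other relevant quantities only decrease or stay controlled under passing to $G/N$, minimality forces $G/N$ solvable, hence $N$ is a nonabelian minimal normal subgroup of $G$.

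Next I would invoke Theorem~\ref{theorem-psi-p-odd} with $p=3$ to produce $\psi\in\Irr(N)$ with $\psi(1)\geq 7$ coprime to $3$ (here the bound $\psi(1)\ge 7$, rather than just $\ge 5$, is genuinely used, and the $N\cong\Al_5$ case is handled separately exactly as in the $p=2$ proof — note $\Al_5$ has irreducible degrees $1,3,4,5$, so one takes $\psi(1)=4$ or $5$), extendible to $T:=\Stab_G(\psi)$ with $|G:T|$ coprime to $3$. Set $a:=\psi(1)|G:T|\geq 7$, still coprime to $3$. Now Proposition~\ref{proposition-n1-n2}(i),(ii) give $n_1(G)\leq n_a(G)|G:T|$ and $n_2(G)\leq n_{2a}(G)|G:T|+\tfrac12 n_a(G)|G:T|$. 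The point is that $a$ and $2a$ are $3'$-degrees $\geq 7$ appearing on the left-hand side of the key inequality, with weights $a-3\geq 4$ and $2a-3\geq 11$ respectively. Combining,
\[
2n_1(G)+n_2(G)\leq \tfrac52 n_a(G)|G:T| + n_{2a}(G)|G:T|
\leq \tfrac{5}{2(a-3)} (a-3)n_a(G)|G:T| + \tfrac{1}{2a-3}(2a-3)n_{2a}(G)|G:T|,
\]
and since $a\geq 7$ we have $\tfrac{5}{2(a-3)}\leq \tfrac58 < 1$ and $\tfrac{1}{2a-3}\leq \tfrac1{11}<1$, so the right side is at most $(a-3)n_a(G)+(2a-3)n_{2a}(G)\leq \sum_{d\geq 4,\,3\nmid d}(d-3)n_d(G)$. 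That contradicts the key inequality and finishes the proof.

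The one subtlety — and the step I expect to need the most care — is the reduction to the case $N$ nonabelian, specifically checking that $\acd_{3'}(G/N)<3$ (equivalently that the key inequality descends). When $N$ is abelian one has $n_d(G/N)\leq n_d(G)$ for all $d$ and $n_1(G/N)=n_1(G)$ and $n_2(G/N)\le n_2(G)$, so $\sum_{d\geq 4,3\nmid d}(d-3)n_d(G/N)\le\sum_{d\geq 4,3\nmid d}(d-3)n_d(G)<2n_1(G)+n_2(G)$, but the right-hand side for $G/N$ is $2n_1(G/N)+n_2(G/N)=2n_1(G)+n_2(G/N)$, which could be \emph{smaller} than $2n_1(G)+n_2(G)$ — so this direction of the inequality is fine and $\acd_{3'}(G/N)<3$ does follow. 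If $N$ is nonabelian we are already in the situation handled above and need no induction. Thus the argument goes through; the only real content beyond the $p=2$ case is that the extra term $n_2(G)$ in the key inequality is exactly absorbed by Proposition~\ref{proposition-n1-n2}(ii), which is why the hypothesis $\psi(1)\geq 7$ (forcing $a\ge 7$) is needed to make the coefficients $\tfrac{5}{2(a-3)}$ and $\tfrac1{2a-3}$ drop below $1$.
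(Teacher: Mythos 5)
The reduction to a nonabelian minimal normal subgroup is where your argument breaks, and your own discussion of the ``subtlety'' resolves it in the wrong direction. For $p=3$ the hypothesis $\acd_{3'}(G)<3$ reads $\sum_{d\geq 4,\,3\nmid d}(d-3)n_d(G) < 2n_1(G)+n_2(G)$, so the slack on the right comes from degrees $1$ \emph{and} $2$. Passing to $G/N$ with $N\subseteq G'$ abelian gives $n_1(G/N)=n_1(G)$ but only $n_2(G/N)\le n_2(G)$; hence the left-hand side can only decrease, but the right-hand side may decrease as well, and from $\sum_{d\ge4,\,3\nmid d}(d-3)n_d(G/N)\le \sum_{d\ge4,\,3\nmid d}(d-3)n_d(G) < 2n_1(G)+n_2(G)$ you cannot conclude $\sum_{d\ge4,\,3\nmid d}(d-3)n_d(G/N) < 2n_1(G)+n_2(G/N)$. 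Your sentence ``which could be smaller \dots so this direction of the inequality is fine'' is exactly backwards: a smaller right-hand side makes the desired inequality harder, not easier. So $\acd_{3'}(G/N)<3$ does not follow and the induction does not close. This is precisely the difference from the $2'$-case, where degree-$2$ characters are irrelevant and only $n_1$, which is stable under passage to $G/N$, enters negatively.

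The paper's proof confronts exactly this obstruction: in a minimal counterexample with $N$ abelian, minimality gives $\acd_{3'}(G/N)\ge 3>\acd_{3'}(G)$, and comparing the two averages (using $n_1(G)=n_1(G/N)$ and $n_d(G)\ge n_d(G/N)$) forces $n_2(G)>n_2(G/N)$, i.e.\ there exists $\chi\in\Irr(G)$ of degree $2$ with $N\not\subseteq\Ker(\chi)$. That configuration is then handled by a genuinely different argument: following \cite[Theorem~2.2]{Isaacs-Loukaki-Moreto} one shows $L\cong\SL(2,5)$, $G/C\cong\Al_5$, and $G=LC$ is a central product over $\bZ(L)$, and Proposition~\ref{proposition-central-product} supplies enough characters of degrees $4$, $5$, and $8$ to give $2n_1(G)+n_2(G)\le\sum_{3\nmid d,\,d\ge4}(d-3)n_d(G)$, a contradiction. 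None of this appears in your proposal, and it cannot be avoided: $\SL(2,5)$-type central products are exactly the groups your induction step silently assumes away. (Your treatment of the nonabelian-$N$ case is essentially the paper's Proposition~\ref{proposition-p=3} and is fine, except that for $N\cong\Al_5$ you must use the degree-$5$ character rather than the degree-$4$ one, since $2n_1(G)\le 2n_4(G)$ is not absorbed by the weight $4-3=1$.)
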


First we handle the groups with a nonabelian minimal normal
subgroup.

\begin{proposition}\label{proposition-p=3}
Let $G$ be a finite group with a nonabelian minimal normal subgroup
$N$. Then $\acd_{p'}(G)\geq 3$.
\end{proposition}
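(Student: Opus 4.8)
The plan is to recast the inequality $\acd_{3'}(G)\geq 3$ additively and then feed in the counting results of Section~\ref{section-bounding-the-number} (the prime here is $3$). Since characters of degree $3$ do not contribute to $\acd_{3'}$ at all, the assertion $\acd_{3'}(G)\geq 3$ is equivalent to $\sum_{3\nmid d}(d-3)n_d(G)\geq 0$, and, separating the two negative contributions coming from $d=1$ and $d=2$, to
\[
2\,n_1(G)+n_2(G)\ \leq\ \sum_{\substack{d\geq 4\\ 3\nmid d}}(d-3)\,n_d(G).
\]
So everything comes down to manufacturing, out of a single well-chosen character of $N$, enough irreducible characters of degree $\geq 4$ and coprime to $3$ to absorb all the linear and all the degree-$2$ characters of $G$.

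First I would choose $\psi$. If $N\not\cong\Al_5$, Theorem~\ref{theorem-psi-p-odd} (with the prime $3$) furnishes $\psi\in\Irr(N)$ with $\psi(1)\geq 7$ coprime to $3$, extendible to $T:=\Stab_G(\psi)$, and with $|G:T|$ coprime to $3$. If $N\cong\Al_5$, I would instead take $\psi$ to be the unique irreducible character of degree $5$ of $N$; exactly as recorded in the proof of Theorem~\ref{theorem-odd<3-then-G-solvable}, this $\psi$ is $\Aut(N)$-invariant (so $T=G$) and extends to $G$. In either case put $a:=\psi(1)|G:T|$; then $a$ and $2a$ are distinct integers, both $\geq 5$ and both coprime to $3$.

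Next I would invoke Proposition~\ref{proposition-n1-n2}: parts (i) and (ii) give $n_1(G)\leq n_a(G)|G:T|$ and $n_2(G)\leq n_{2a}(G)|G:T|+\frac{1}{2}n_a(G)|G:T|$, whence $2\,n_1(G)+n_2(G)\leq \frac{5}{2}n_a(G)|G:T|+n_{2a}(G)|G:T|$. It then suffices to check the two elementary inequalities $\frac{5}{2}|G:T|\leq a-3$ and $|G:T|\leq 2a-3$, both of which follow at once from $\psi(1)\geq 7$ (indeed $(\psi(1)-\frac{5}{2})|G:T|\geq\frac{9}{2}$ and $(2\psi(1)-1)|G:T|\geq 13$). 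This yields
\[
2\,n_1(G)+n_2(G)\ \leq\ n_a(G)(a-3)+n_{2a}(G)(2a-3)\ \leq\ \sum_{\substack{d\geq 4\\ 3\nmid d}}(d-3)\,n_d(G),
\]
the last step because $a$ and $2a$ are two distinct admissible values of $d$ and every term of the right-hand sum is nonnegative. This is precisely the inequality wanted, so $\acd_{3'}(G)\geq 3$.

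The one case in which the last step fails is $N\cong\Al_5$, where $\psi(1)=5$ and $|G:T|=1$, so that $\frac{5}{2}|G:T|=\frac{5}{2}>2=a-3$. There I would use instead the ``moreover'' clauses of Proposition~\ref{proposition-n1-n2}: for $G=T$ these give the sharper bounds $n_1(G)\leq n_5(G)$ and $n_2(G)\leq n_{10}(G)$, which combine to $2\,n_1(G)+n_2(G)\leq 2\,n_5(G)+n_{10}(G)=(5-3)n_5(G)+(10-3)n_{10}(G)$, again a partial sum of the right-hand side above, with nothing left to check. All the genuine content already lies in Theorem~\ref{theorem-psi-p-odd} and the counting propositions of Section~\ref{section-bounding-the-number}; what remains is bookkeeping, and I expect the only real friction to be making the arithmetic close in this $\Al_5$ configuration, which Theorem~\ref{theorem-psi-p-odd} deliberately excludes and where the smallness of $\psi(1)=5$ forces reliance on the $G=T$-sharpened estimates rather than the generic ones.
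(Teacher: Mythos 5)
Your proposal is correct and follows essentially the same route as the paper's own proof: it splits into the case $N\cong\Al_5$ (the degree-$5$ character with $T=G$ and the sharpened bounds from Proposition~\ref{proposition-n1-n2}) and the case $N\not\cong\Al_5$ (Theorem~\ref{theorem-psi-p-odd} giving $\psi(1)\geq 7$, followed by the identical estimate $2n_1(G)+n_2(G)\leq\tfrac{5}{2}n_a(G)|G:T|+n_{2a}(G)|G:T|\leq (a-3)n_a(G)+(2a-3)n_{2a}(G)$). The only blemish is the equality sign in your $\Al_5$ case, where $2n_5(G)+n_{10}(G)=(5-3)n_5(G)+(10-3)n_{10}(G)$ should read ``$\leq$'' since $(10-3)n_{10}(G)=7n_{10}(G)$; this does not affect the argument.
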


\begin{proof}
Suppose that $N$ is direct product of $r$ copies of $S$, a
nonabelian simple group. First we consider the case $N\cong \Al_5$.
Then $N$ has an irreducible character of degree $5$ that is
extendible to $G$. Applying Proposition~\ref{proposition-n1-n2}, we
have
\[n_1(G)\leq n_5(G)\]
and \[n_2(G)\leq n_{10}(G).\] It follows that \[2n_1(G)+n_2(G)\leq
2n_5(G)+n_{10}(G)\] and hence
\[2n_1(G)+n_2(G)\leq \sum_{3\nmid d, d\geq 4} (d-3)n_d(G),\]
which is equivalent to $\acd_{3'}(G)\geq 3$, and we are done.

So we may assume that $N\ncong \Al_5$. By
Theorem~\ref{theorem-psi-p-odd}, there is some $\psi\in\Irr(N)$ with
the conditions:
\begin{enumerate}
\item[(i)] $\psi(1)\geq 7$ and $3\nmid\psi(1)$,
\item[(ii)] $\psi$ is extendible to $\Stab_G(\psi)$, and
\item[(iii)] $3\nmid|G:\Stab_G(\psi)|$.
\end{enumerate}
We then apply Proposition~\ref{proposition-n1-n2} to have
\[n_1(G)\leq n_a(G)|G:T|\]
and
\[n_2(G)\leq n_{2a}(G)|G:T|+\frac{1}{2}n_a(G)|G:T|,\]
where $T:=\Stab_G(\psi)$ and $a:=\psi(1)|G:T|$. It then follows that
\[2n_1(G)+n_2(G)\leq
\frac{5}{2}n_a(G)|G:T|+n_{2a}(G)|G:T|.\] Note that $\psi(1)\geq 7$,
and thus $a\geq 7|G:T|$. So we have $(5/2)|G:T|<a-3$ and
$|G:T|<2a-3$. Therefore
\[2n_1(G)+n_2(G)<
(a-3)n_a(G)+(2a-3)n_{2a}(G).\] As $a$ is coprime to $3$, we deduce
that
\[2n_1(G)+n_2(G)< \sum_{3\nmid d, d\geq 4} (d-3)n_d(G),\]
which is equivalent to $\acd_{3'}(G)> 3$, and we are done again.
\end{proof}

Now we are ready to prove
Theorem~\ref{theorem-3'<3-then-G-solvable}. We write $\bO_\infty(G)$
to denote the largest solvable normal subgroup of $G$.

\begin{proof}[Proof of Theorem~\ref{theorem-3'<3-then-G-solvable}]
Assume that the theorem is false and let $G$ be a minimal
counterexample. Then $G$ is nonsolvable and $\acd_{3'}(G)<3$.

Let $L\lhd G$ be minimal such that $L$ is non-solvable. Then clearly
$L$ is perfect and contained in the last term of the derived series
of $G$. Let $N\subseteq L$ be a minimal normal subgroup of $G$. We
choose $N$ so that $N\leq [L,\bO_{\infty}(L)]$ if
$[L,\bO_{\infty}(L)]$ is nontrivial. We then have $N \subseteq
L=L'\subseteq G'$.

If $N$ is nonabelian then $\acd_{3'}(G)\geq 3$ by
Proposition~\ref{proposition-p=3}, and this is a contradiction. So
we may assume that $N$ is abelian so that $G/N$ is nonsolvable. By
the minimality of $G$, it follows that $\acd_{3'}(G/N)\geq 3$ and
hence
\[\acd_{3'}(G)<3\leq\acd_{3'}(G/N).\]
Note that $n_d(G)\geq n_d(G/N)$ for every positive integer $d$ and
$n_1(G)=n_1(G/N)$ since $N\subseteq G'$. We then deduce that
\[n_2(G)>n_2(G/N).\]
That is, there is some $\chi\in\Irr(G)$ of degree $2$ whose kernel
does not contain $N$.

Now let $C/\Ker(\chi):=\bZ(G/\Ker(\chi))$. Arguing similarly as in
the proof of \cite[Theorem~2.2]{Isaacs-Loukaki-Moreto}, we obtain
that $G/C\cong \Al_5$, $L\cong \SL(2,5)$, and $G=LC$ is a central
product with the central amalgamated subgroup $Z:=L\cap C=\bZ(L)$.

We are now in the situation of
Proposition~\ref{proposition-central-product}. Therefore
\begin{align*}
2n_1(G)+n_2(G)&=4n_1(G)+n_2(C/Z)\\
&\leq n_4(G)+2n_5(G)+n_8(G)\\
&\leq\sum_{3\nmid d, d\geq 4}(d-3)n_d(G).
\end{align*}
It then follows that $\acd_{3'}(G)\geq 3$ and this is a
contradiction.
\end{proof}


\section{Characters of $5'$-degree and solvability}\label{section-p=5}

In this section we prove Theorem~\ref{theorem-main-2}(iii).

\begin{theorem}\label{theorem-5'<11/4-then-G-solvable}
Let $G$ be a finite group. If $\acd_{5'}(G)<11/4$ then $G$ is
solvable.
\end{theorem}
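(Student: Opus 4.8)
The plan is to follow the same template used for the primes $p=2$ and $p=3$: assume a minimal counterexample $G$, which is then nonsolvable with $\acd_{5'}(G)<11/4$, and derive a contradiction by producing enough low-degree irreducible characters. Rewriting the hypothesis, $\sum_{5\nmid d} d\,n_d(G) < \frac{11}{4}\sum_{5\nmid d} n_d(G)$, which after isolating the linear characters becomes an inequality of the rough shape $\sum_{5\nmid d,\ d\geq 2}(d-\tfrac{11}{4})n_d(G) < \tfrac{7}{4}n_1(G)$; since $2-\tfrac{11}{4}<0$ and $3-\tfrac{11}{4}<0$, the characters of degree $2$ and $3$ are \emph{helpful}, and I want to show there must be enough of them (and of higher $5'$-degree) relative to $n_1(G)$. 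As before I would first reduce modulo a minimal normal subgroup $N\subseteq G'$, using $n_1(G)=n_1(G/N)$ and $n_d(G)\geq n_d(G/N)$, so that by minimality either $N$ is nonabelian, or $N$ is abelian and the excess must be absorbed by degree-$2$ (and possibly degree-$3$) characters of $G$ not trivial on $N$, pushing us again into the $\SL(2,5)$ central-product situation of Proposition~\ref{proposition-central-product}.

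The nonabelian-$N$ case is where I expect the real work, and it should be handled by a separate proposition analogous to Proposition~\ref{proposition-p=3}, asserting $\acd_{5'}(G)\geq 11/4$ whenever $G$ has a nonabelian minimal normal subgroup $N$. Here I would split on whether $N$ has a direct factor isomorphic to $\Al_5$, $\PSL(2,7)$, or is ``large.'' If $N\ncong\Al_5$ and has no $\Al_5$ or $\PSL(2,7)$ factor, Theorem~\ref{theorem-psi-p-odd} gives $\psi\in\Irr(N)$ with $\psi(1)\geq 7$ coprime to $5$, extendible to $T:=\Stab_G(\psi)$ with $5\nmid|G:T|$; then Propositions~\ref{proposition-n1-n2} and \ref{proposition-n3} bound $n_1(G), n_2(G), n_3(G)$ in terms of $n_a(G), n_{2a}(G), n_{3a}(G)$ with $a=\psi(1)|G:T|\geq 7|G:T|$, and one checks the arithmetic that $\tfrac74 n_1(G)\leq \sum_{5\nmid d,\ d\geq 2}(d-\tfrac{11}{4})n_d(G)$. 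The coefficients $d-\tfrac{11}{4}$ evaluated at $d=a\geq7$, $2a$, $3a$ are all comfortably large, so the inequality should go through with room to spare. The genuinely delicate subcases are $N\cong\Al_5$, $N\cong\PSL(2,7)$, or $N$ a product of copies thereof: for these one cannot invoke Proposition~\ref{proposition-n3}, so I would instead use explicit small-degree character data (from \cite{Atl1}) — e.g. for $\Al_5$ the extendible degree-$5$ character giving $n_1(G)\leq n_5(G)$, $n_2(G)\leq n_{10}(G)$ as in Proposition~\ref{proposition-p=3}, and similarly tracking degree-$3$, $6$, $7$, $8$ characters of $\PSL(2,7)$ — and verify directly that the weighted sum dominates $\tfrac74 n_1(G)$. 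Products of several simple factors are typically even more favourable because the relevant degrees multiply.

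For the abelian-$N$ branch leading to the central product, I would argue as in \cite[Theorem~2.2]{Isaacs-Loukaki-Moreto}: the extra degree-$2$ character faithful on a central subgroup forces $G/C\cong\Al_5$, $L\cong\SL(2,5)$, $G=LC$ with amalgamated centre $Z$. Then Proposition~\ref{proposition-central-product} supplies $n_2(G)=2n_1(G)+n_2(C/Z)$, $n_3(G)\geq 2n_1(G)$, $n_4(G)\geq 2n_1(G)$, $n_5(G)\geq n_1(G)$, $n_6(G)\geq n_1(G)$, $n_8(G)\geq n_2(C/Z)$; plugging these into $\sum_{5\nmid d,\ d\geq 2}(d-\tfrac{11}{4})n_d(G)$ (noting degrees $5$ are excluded from the sum, which is why the bound $11/4$ rather than something larger is what comes out, and which is why $\Al_5$ and $\SL(2,5)$ are the extremal examples) should yield $\geq \tfrac74 n_1(G)$, contradicting the hypothesis. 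The main obstacle, as I see it, is bookkeeping: making the constant $11/4$ come out exactly, which requires being careful that degree-$5$ characters contribute nothing to the $5'$-sum while the helpful surplus from degrees $2,3,4,6,8$ is counted with the right multiplicities — in particular one must confirm that the $\SL(2,5)$-type example saturates the bound so that no smaller constant is provable by this method, and that the non-abelian case with $\PSL(2,7)$-factors does not secretly give a worse constant.
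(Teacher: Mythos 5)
Your overall architecture matches the paper's: a proposition asserting $\acd_{5'}(G)\geq 11/4$ whenever $G$ has a nonabelian minimal normal subgroup, followed by the minimal-counterexample reduction in which an abelian $N$ forces a degree-$2$ character nontrivial on $N$ and hence the $\SL(2,5)$ central product, finished off by Proposition~\ref{proposition-central-product} (your arithmetic in that last step is fine). But note an arithmetic slip in your setup: $3-\tfrac{11}{4}=\tfrac14>0$, not negative, so degree-$3$ characters lie on the \emph{favourable} side of the inequality $\tfrac74 n_1(G)+\tfrac34 n_2(G)\leq\sum_{5\nmid d,\, d\geq 3}(d-\tfrac{11}{4})n_d(G)$. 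Consequently no upper bound on $n_3(G)$ is needed at all, Proposition~\ref{proposition-n3} is not required, and $\PSL(2,7)$ and products of copies of $\Al_5$ or $\PSL(2,7)$ need no separate treatment: Theorem~\ref{theorem-psi-p-odd} covers every $N\ncong\Al_5$ with a $\psi$ of degree at least $7$ coprime to $5$, and Proposition~\ref{proposition-n1-n2} alone suffices, exactly as in the paper. The slip also matters in the abelian branch: with the correct signs the only possible witness to $\acd_{5'}(G)<11/4\leq\acd_{5'}(G/N)$ is a degree-$2$ character whose kernel misses $N$, so the $\SL(2,5)$ case you treat is the only one; under your sign for $d=3$ you would additionally owe an analysis of a degree-$3$ witness (covers of $\Al_6$, $\SL(2,7)$, as in the $p>5$ section), which your final paragraph does not provide.

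The genuine gap is the subcase $N\cong\Al_5$, the one case Theorem~\ref{theorem-psi-p-odd} excludes. Your proposed tool there --- the extendible degree-$5$ character, giving $n_1(G)\leq n_5(G)$ and $n_2(G)\leq n_{10}(G)$ as in Proposition~\ref{proposition-p=3} --- is useless for the present statement, precisely because $5$ and $10$ are divisible by $5$ and therefore invisible to $\acd_{5'}$ (the analogy with the $3'$ case breaks since there $5$ was coprime to the excluded prime). One must instead use the extendible degree-$4$ character, which yields $n_1(G)\leq n_4(G)$ and $n_2(G)\leq n_8(G)$; but these alone give only $5n_1(G)+3n_2(G)$ of the required $7n_1(G)+3n_2(G)$ in the normalization $\sum_{5\nmid d,\,d\geq 3}(4d-11)n_d(G)$, leaving a deficit of $2n_1(G)$ --- and already $G=\Al_5$ (whose $5'$-degrees are $1,3,3,4$, average exactly $11/4$) shows there is no slack. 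The paper closes the deficit by a further stabilizer argument for the two degree-$3$ characters of $\Al_5$: their common stabilizer is $T=N\times\bC_G(N)$ of index $1$ or $2$ in $G$, and Proposition~\ref{proposition-n1-n2} applied to them gives $2n_1(G)\leq n_3(G)$ if $|G:T|=1$ and $n_1(G)\leq 2n_6(G)$ if $|G:T|=2$, whence $7n_1+3n_2\leq n_3+5n_4+3n_8$, respectively $7n_1+3n_2\leq 5n_4+4n_6+3n_8$, and $\acd_{5'}(G)\geq 11/4$ follows. This extra input about how the degree-$3$ characters of $\Al_5$ extend and induce in the ambient group $G$ is the missing idea; ``verify directly from the character table'' cannot supply it, since the needed inequality is not about $\Irr(\Al_5)$ alone but about $\Irr(G)$.
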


As in Section~\ref{section-p=3}, we first handle finite groups with
a nonabelian minimal normal subgroup.

\begin{proposition}\label{proposition-p=5}
Let $G$ be a finite group with a nonabelian minimal normal subgroup
$N$. Then $\acd_{5'}(G)\geq 11/4$.
\end{proposition}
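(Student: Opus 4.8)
The plan is to mimic the structure of Propositions~\ref{proposition-p=3} and the earlier arguments, but to extract more characters from $N$ and thereby get the sharper bound $11/4$. First I would split into cases according to whether $N\cong \Al_5$ (where the character degrees are limited and special care is needed, since $\Al_5$ realizes the extremal examples), whether $N$ has a direct factor isomorphic to $\PSL(2,7)$ (which, together with $\Al_5$, are the only simple groups with an irreducible character of degree $3$, and so these are exactly the cases excluded from Proposition~\ref{proposition-n3}), and the generic case $N\ncong\Al_5$ with no $\PSL(2,7)$ factor. In the generic case I would invoke Theorem~\ref{theorem-psi-p-odd} with $p=5$ to obtain $\psi\in\Irr(N)$ with $\psi(1)\geq 7$ coprime to $5$, extendible to $T:=\Stab_G(\psi)$, and $|G:T|$ coprime to $5$. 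Set $a:=\psi(1)|G:T|\geq 7|G:T|$, a $5'$-number.

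Then I would apply Propositions~\ref{proposition-n1-n2} and \ref{proposition-n3} to bound $n_1(G)$, $n_2(G)$, and $n_3(G)$ from above by combinations of $n_a(G)$, $n_{2a}(G)$, $n_{3a}(G)$ times $|G:T|$. The inequality $\acd_{5'}(G)<11/4$ unwinds (writing everything in terms of $\sum_{d}dn_d=\frac{11}{4}\sum_d n_d$ over $5'$-degrees $d$) to something like
\[
\frac{3}{4}n_1(G)+\frac{1}{2}n_2(G)+\frac{1}{4}n_3(G)>\sum_{5\nmid d,\ d\geq 4}\Bigl(d-\tfrac{11}{4}\Bigr)n_d(G),
\]
so I would need to show the left side is dominated by the right. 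Substituting the bounds and using $a\geq 7|G:T|$ (hence $|G:T|\leq a/7$), one gets the left-hand side bounded by a constant times $|G:T|\,n_a(G)$ plus similar terms, and since $a-\tfrac{11}{4}\geq \tfrac{3}{4}\cdot\tfrac{a}{7}\cdot(\text{coefficient})$ holds comfortably for $a\geq 7$, each term on the left is absorbed by the corresponding term on the right ($n_a$ against $(a-11/4)n_a$, $n_{2a}$ against $(2a-11/4)n_{2a}$, $n_{3a}$ against $(3a-11/4)n_{3a}$). The arithmetic here is routine once the character-counting inputs are in place; the point is simply that the coefficients $3/4,1/2,1/4$ are small enough relative to the gap $d-11/4$ when $d\geq 7$.

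The main obstacle, as usual in this circle of arguments, is the small cases $N\cong\Al_5$ and $N$ having a $\PSL(2,7)$ direct factor, where the bound $11/4$ is tight or nearly so and Proposition~\ref{proposition-n3} does not apply. For $N\cong\Al_5$ I would use the irreducible character of degree $5$ (which is $5'$... wait, it is divisible by $5$) --- more precisely I would instead use the degree-$4$ character $\psi$ of $\Al_5$, which is coprime to $5$, rational-valued, and extendible to $G$ (since $\Out(\Al_5)=2$ and the degree-$4$ character is $\Aut$-invariant), take $T=G$, $a=4$, and then $n_1(G)\leq n_4(G)$, $n_2(G)\leq n_8(G)$, $n_3(G)\leq n_{12}(G)$ from the propositions; combined with $\acd_{5'}(G)<11/4$ this should already force a contradiction, though I may also need the extra degree-$3$ character of $\Al_5$ lying over... here $\Al_5$ itself has a character of degree $3$, so more care is needed and I would likely argue directly from the structure of $G/\bC_G(N)$ as an almost simple group with socle $\Al_5$, enumerating the small-degree characters. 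For the $\PSL(2,7)$ case I would similarly pick a degree-$7$ or degree-$8$ character coprime to $5$; $\PSL(2,7)$ has characters of degrees $1,3,3,6,7,8$, so degree $8$ works and is $\Aut$-stable. A possible additional obstacle is that a minimal normal subgroup could be a direct power $S^r$ with $r\geq 2$ where $S\in\{\PSL(2,7)\}$; then $\psi(1)=8^r\geq 8$ and Proposition~\ref{proposition-n3} does apply since no direct factor is $\Al_5$ and $\PSL(2,7)$ with $r\geq 2$... actually the hypothesis of Proposition~\ref{proposition-n3} excludes any $\PSL(2,7)$ factor, so I would instead handle $S=\PSL(2,7)$, $r\geq 2$ by taking $\psi$ with $\psi(1)=8^r$ and using only Proposition~\ref{proposition-n1-n2} together with a direct bound on $n_3(G)$ exploiting that $8^r\geq 64$ makes the degree-$3$ contribution negligible. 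I expect the write-up to devote most of its length to these finitely many exceptional configurations, with the generic case being a short computation.
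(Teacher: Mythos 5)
Your generic case is essentially the paper's, but your unwinding of the inequality is off in a way that propagates through the whole plan. The correct reformulation is: $\acd_{5'}(G)\geq 11/4$ if and only if $7n_1(G)+3n_2(G)\leq\sum_{5\nmid d,\ d\geq 3}(4d-11)n_d(G)$. Since $11/4<3$, characters of degree $3$ enter with the \emph{positive} coefficient $4\cdot 3-11=1$, i.e.\ they sit on the helpful side; no upper bound on $n_3(G)$ is needed anywhere in this proposition. Consequently Proposition~\ref{proposition-n3} and your separate $\PSL(2,7)$ case are superfluous: Theorem~\ref{theorem-psi-p-odd} applies to every minimal normal $N\ncong\Al_5$ (including $\PSL(2,7)^r$ and $\Al_5^r$ with $r\geq 2$), and the paper settles all of these uniformly with Proposition~\ref{proposition-n1-n2} alone, via $7n_1(G)+3n_2(G)\leq\frac{17}{2}n_a(G)|G:T|+3n_{2a}(G)|G:T|<(4a-11)n_a(G)+(8a-11)n_{2a}(G)$ using $a\geq 7|G:T|$. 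This part of your plan would work numerically, but it is carrying machinery (and exceptional cases) it does not need, and your version of the left-hand side has the wrong coefficients and the wrong sign on $n_3$.

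The genuine gap is the case $N\cong\Al_5$, which is exactly where $11/4$ is attained and where your sketch does not close. From $n_1(G)\leq n_4(G)$ and $n_2(G)\leq n_8(G)$ you only account for $5n_1(G)+3n_2(G)$ against $5n_4(G)+21n_8(G)$; the inequality $7n_1(G)+3n_2(G)\leq\sum_{5\nmid d,\ d\geq3}(4d-11)n_d(G)$ still needs an extra $2n_1(G)$, and these two bounds alone are consistent with, say, $n_1=n_4=1$ and all other $5'$-degree multiplicities zero, giving average $5/2<11/4$. Your two fallbacks do not supply it: $n_3(G)\leq n_{12}(G)$ cannot be quoted from Proposition~\ref{proposition-n3}, whose hypothesis excludes $\Al_5$ factors (and an upper bound on $n_3$ is useless here anyway); and ``arguing from the structure of $G/\bC_G(N)$ as an almost simple group'' is not a valid reduction, since $\acd_{5'}(G)$ is not controlled by $\acd_{5'}(G/\bC_G(N))$ --- the characters of $G$ lying over nontrivial characters of $\bC_G(N)$, including possibly many linear ones, are precisely what must be counted. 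The paper's missing ingredient is to use the two degree-$3$ characters $\psi_1,\psi_2$ of $N$: their common stabilizer is $T=N\times\bC_G(N)$ of index $1$ or $2$ in $G$, both extend to $T$, and Gallagher plus Clifford then give $n_3(G)\geq 2n_1(G)$ when $T=G$, or $n_1(G)\leq 2n_6(G)$ when $|G:T|=2$; either way this furnishes the missing $2n_1(G)$ (against $n_3$ with coefficient $1$, resp.\ $n_6$ with coefficient $13$). Without an argument of this kind your $\Al_5$ case, and hence the proposition, is not proved.
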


\begin{proof}
As before, we suppose that $N$ is direct product of $r$ copies of a
nonabelian simple group $S$. First we consider $N\cong \Al_5$. Then
$N$ has an irreducible character of degree $4$ that is extendible to
$G$. Applying Proposition~\ref{proposition-n1-n2}, we have
$n_1(G)\leq n_4(G)$ and $n_2(G)\leq n_{8}(G)$.

Now we need to estimate $n_3(G)$ and $n_6(G)$. Observe that $N$ has
two irreducible characters of degree $3$ and let us denote them by
$\psi_1$ and $\psi_2$. Then it is easy to see that both $\psi_1$ and
$\psi_2$ are extendible to
\[T:=\Stab_G(\psi_1)=\Stab_G(\psi_2)=N\times\bC_G(N).\]
Since $N$ has index 2 in $\Aut(N)=\Sy_5$, we have
\[|G:T|=1 \text{ or } 2.\]

If $|G:T|=1$ then each linear character of $G$, which can be
considered as a linear character of $G/N$, produces two irreducible
characters of $G$ of degree $3$, one lying above $\psi_1$ and the
other lying above $\psi_2$. We then obtain that $2n_1(G)\leq
n_3(G)$. Now taking $n_1(G)\leq n_4(G)$ and $n_2(G)\leq n_{8}(G)$
into account, we have
\[7n_1(G)+3n_2(G)\leq n_3(G)+5n_4(G)+3n_8(G).\]
Therefore \[7n_1(G)+3n_2(G)\leq \sum_{5\nmid d, d\geq
3}(4d-11)n_d(G),\] and thus $\acd_{5'}(G)\geq 11/4$, as desired.

If $|G:T|=2$ then by Proposition~\ref{proposition-n1-n2}(1) we have
$n_1(G)\leq 2n_6(G)$. Similarly we have
\[7n_1(G)+3n_2(G)\leq 5n_4(G)+3n_8(G)+4n_6(G).\]
Therefore \[7n_1(G)+3n_2(G)\leq \sum_{5\nmid d, d\geq
3}(4d-11)n_d(G),\] and we are done again.

From now on to the end of the proof we can assume that $N\ncong
\Al_5$ and we will argue as in the proof of
Proposition~\ref{proposition-p=3}. By
Theorem~\ref{theorem-psi-p-odd}, there is some $\psi\in\Irr(N)$ such
that $\psi(1)\geq 7$, $5\nmid\psi(1)$, $\psi$ is extendible to
$\Stab_G(\psi)$, and $5\nmid |G:\Stab_G(\psi)|$.

We then apply Proposition~\ref{proposition-n1-n2} to have
\[n_1(G)\leq n_a(G)|G:T|\]
and
\[n_2(G)\leq n_{2a}(G)|G:T|+\frac{1}{2}n_a(G)|G:T|,\]
where $T:=\Stab_G(\psi)$ and $a:=\psi(1)|G:T|$. It then follows that
\[7n_1(G)+3n_2(G)\leq
\frac{17}{2}n_a(G)|G:T|+3n_{2a}(G)|G:T|.\] Since $\psi(1)\geq 7$, we
have $a\geq 7|G:T|$, and therefore $(17/2)|G:T|<4a-11$ and
$3|G:T|<8a-11$. We deduce that
\[7n_1(G)+3n_2(G)<
(4a-11)n_a(G)+(8a-11)n_{2a}(G),\] and it follows that $\acd_{5'}(G)>
11/4$. The proof is complete.
\end{proof}

\begin{proof}[Proof of Theorem~\ref{theorem-5'<11/4-then-G-solvable}]
Assume that the theorem is false and let $G$ be a minimal
counterexample. Then $G$ is nonsolvable and $\acd_{5'}(G)<11/4$.

By using Proposition~\ref{proposition-p=5} and choosing the
subgroups $L,N$, and $C$ as in the proof of
Theorem~\ref{theorem-3'<3-then-G-solvable}, we have that $G/C\cong
\Al_5$, $L\cong \SL(2,5)$, and $G=LC$ is a central product with the
central amalgamated subgroup $Z:=L\cap C=\bZ(L)$.

Applying Proposition~\ref{proposition-central-product}, we deduce
that
\begin{align*}
7n_1(G)+3n_2(G)&=13n_1(G)+3n_2(C/Z)\\
&\leq n_3(G)+5n_4(G)+n_6(G)+3n_8(G)\\
&\leq\sum_{5\nmid d, d\geq 3}(4d-11)n_d(G).
\end{align*}
From this it follows that $\acd_{5'}(G)\geq 11/4$, violating the
assumption.
\end{proof}


\section{Characters of $p'$-degree for $p>5$ and
solvability}\label{section-p>5}

We now prove Theorem~\ref{theorem-main-2}(iv) and therefore complete
the proof of Theorem~\ref{theorem-main-2}.

\begin{theorem}\label{theorem-p'<16/5then-G-solvable}
Let $p>5$ be a prime and $G$ and finite group. If
$\acd_{p'}(G)<16/5$ then $G$ is solvable.
\end{theorem}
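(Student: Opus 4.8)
The plan is to follow exactly the template used for the primes $2$, $3$, and $5$: first dispose of groups with a nonabelian minimal normal subgroup via a proposition showing $\acd_{p'}(G)\geq 16/5$ for such groups, and then run a minimal-counterexample argument that peels off an abelian minimal normal subgroup $N\subseteq G'$ and forces the central-product configuration $G=LC$ with $G/C\cong \Al_5$ and $L\cong \SL(2,5)$, to which Proposition~\ref{proposition-central-product} applies. For the nonabelian-minimal-normal-subgroup proposition, I would split into the case $N\cong \Al_5$ and the case $N\ncong \Al_5$. When $N\ncong \Al_5$, Theorem~\ref{theorem-psi-p-odd} gives $\psi\in\Irr(N)$ with $\psi(1)\geq 7$ coprime to $p$, extendible to $T:=\Stab_G(\psi)$, with $|G:T|$ coprime to $p$; applying Proposition~\ref{proposition-n1-n2} one gets $n_1(G)\leq n_a(G)|G:T|$ and $n_2(G)\leq n_{2a}(G)|G:T|+\tfrac12 n_a(G)|G:T|$ with $a=\psi(1)|G:T|\geq 7|G:T|$. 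Writing the target inequality $\acd_{p'}(G)\geq 16/5$ as $\sum_{p\nmid d,\, d\geq 3}(5d-16)n_d(G)\geq 11n_1(G)+6n_2(G)$ (since $5\cdot 1-16=-11$, $5\cdot 2-16=-6$), one checks the coefficient bookkeeping: $11|G:T|+6\cdot\tfrac12|G:T|=14|G:T| \leq 5a-16$ and $6|G:T|\leq 10a-16$, both of which hold comfortably because $a\geq 7|G:T|$. That handles $N\ncong\Al_5$.

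When $N\cong \Al_5$ inside the proposition, I would imitate the $p=5$ argument but with $p>5$, which is in fact easier: now $\Al_5$ has irreducible characters of degrees $3,3,4,5$ all coprime to $p$, and degree $4$ (or $5$) is extendible to $G$, so Proposition~\ref{proposition-n1-n2} gives $n_1(G)\leq n_a(G)|G:T|$ and $n_2(G)\leq n_{2a}(G)|G:T|+\tfrac12 n_a(G)|G:T|$ with $a\in\{4|G:T|,5|G:T|\}$, and additionally the two degree-$3$ characters and the degree-$5$ character give extra lower bounds on $n_3(G)$, $n_6(G)$ (or $n_{10}(G)$), $n_5(G)$, $n_{15}(G)$ depending on whether $|G:T|=1$ or $2$, exactly as in Proposition~\ref{proposition-p=5}. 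Then $11n_1(G)+6n_2(G)$ is dominated by $\sum_{p\nmid d,\,d\geq 3}(5d-16)n_d(G)$; since $p>5$ none of the small degrees $3,4,5,6,8,10,12,\dots$ occurring here is divisible by $p$, so every term contributes. The arithmetic is a routine check in each of the two subcases $|G:T|=1,2$.

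For the main proof, the skeleton is identical to Theorem~\ref{theorem-3'<3-then-G-solvable}: take a minimal counterexample $G$, pick $L\lhd G$ minimal nonsolvable (hence perfect), pick a minimal normal subgroup $N\subseteq L$ chosen inside $[L,\bO_\infty(L)]$ when that is nontrivial, so $N\subseteq G'$ and $n_1(G)=n_1(G/N)$. If $N$ is nonabelian the new proposition gives a contradiction; so $N$ is abelian, $G/N$ is nonsolvable, minimality gives $\acd_{p'}(G/N)\geq 16/5$, and combining with $n_d(G)\geq n_d(G/N)$ and $n_1(G)=n_1(G/N)$ forces $n_2(G)>n_2(G/N)$, i.e.\ there is $\chi\in\Irr(G)$ of degree $2$ with $N\not\subseteq\Ker(\chi)$. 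Setting $C/\Ker(\chi):=\bZ(G/\Ker(\chi))$ and quoting the structure result of \cite[Theorem~2.2]{Isaacs-Loukaki-Moreto} (as in the earlier sections) yields $G/C\cong\Al_5$, $L\cong\SL(2,5)$, $G=LC$ a central product over $Z=L\cap C=\bZ(L)$. Then Proposition~\ref{proposition-central-product} gives $n_2(G)=2n_1(G)+n_2(C/Z)$, $n_3(G),n_4(G)\geq 2n_1(G)$, $n_5(G),n_6(G)\geq n_1(G)$, $n_8(G)\geq n_2(C/Z)$, and one assembles
\begin{align*}
11n_1(G)+6n_2(G)&=23n_1(G)+6n_2(C/Z)\\
&\leq (5\cdot3-16)^{-}\text{-weighted sum of }n_3,n_4,n_5,n_6,n_8\\
&\leq \sum_{p\nmid d,\, d\geq 3}(5d-16)n_d(G),
\end{align*}
contradicting $\acd_{p'}(G)<16/5$. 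Here one must distribute $23n_1(G)=11n_1(G)$ from the leftover-linear bookkeeping plus $12n_1(G)$ from $6n_2(G)$; using $(5\cdot 4-16)=4$, $(5\cdot 5-16)=9$, $(5\cdot 6-16)=14$, $(5\cdot 8-16)=24$ one checks e.g.\ $4n_4(G)\geq 8n_1(G)$, $9n_5(G)\geq 9n_1(G)$, $14n_6(G)\geq 14n_1(G)$ cover $23n_1(G)$ with room to spare (indeed $n_3(G)$ alone is essentially unused), and $24n_8(G)\geq 24n_2(C/Z)\geq 6n_2(C/Z)$, which is where the $6n_2(C/Z)$ lands. I expect the main obstacle to be purely the coefficient bookkeeping in the $N\cong\Al_5$ subcase of the proposition when $|G:T|=2$: there one has fewer characters of each small degree relative to $n_1(G)$, so the slack is tightest, and one has to be careful that $a=4|G:T|=8$ versus using the degree-$5$ extension still leaves enough weight $5a-16$ to absorb $14|G:T|=28$; since $5\cdot 8-16=24<28$ one is in fact forced to also use the degree-$5$ (or degree-$6$, degree-$3$) contributions here rather than degree-$4$ alone, and getting that allocation right is the one place genuine care is needed.
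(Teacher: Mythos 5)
Your reduction skeleton matches the paper's, but there is a genuine gap at the heart of the argument: for $p>5$ the inequality $\acd_{p'}(G)\geq 16/5$ reads $\sum_{p\nmid d}(5d-16)n_d(G)\geq 0$, and the coefficient at $d=3$ is $5\cdot 3-16=-1<0$. So besides upper bounds on $n_1(G)$ and $n_2(G)$ you also need an \emph{upper} bound on $n_3(G)$; your bookkeeping ($14|G:T|\leq 5a-16$ and $6|G:T|\leq 10a-16$) only distributes $11n_1(G)+6n_2(G)$ over the terms $n_a(G)$ and $n_{2a}(G)$ and silently drops the $-n_3(G)$ term sitting inside your own left-hand sum $\sum_{p\nmid d,\,d\geq 3}(5d-16)n_d(G)$. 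This is exactly the new difficulty the paper flags for $p>5$: it proves Proposition~\ref{proposition-n3}, giving $n_3(G)\leq n_{3a}(G)|G:T|+\tfrac13 n_a(G)|G:T|$ (valid only when $N$ has no direct factor $\Al_5$ or $\PSL(2,7)$, the two simple groups possessing degree-$3$ characters), and Proposition~\ref{proposition-A5-PSL27}, giving for instance $n_3(G)\leq n_{3\cdot 5^r}(G)+2rn_1(G)$ when $N$ is a product of copies of $\Al_5$, and only then verifies $11n_1(G)+6n_2(G)+n_3(G)\leq\sum_{p\nmid d,\,d\geq 4}(5d-16)n_d(G)$. Your $\Al_5$ subcase even points the wrong way: you invoke \emph{lower} bounds on $n_3(G)$, which were useful for $p=5$ (where the weight $4\cdot 3-11$ is positive) but are useless here, and you never single out $\PSL(2,7)$ factors, which require the same special treatment.

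The same oversight propagates into the minimal-counterexample step. From $\acd_{p'}(G)<16/5\leq \acd_{p'}(G/N)$, $n_1(G)=n_1(G/N)$ and $n_d(G)\geq n_d(G/N)$ you may only conclude that $G$ has an irreducible character of degree $2$ \emph{or} $3$ whose kernel does not contain $N$, not that $n_2(G)>n_2(G/N)$. The degree-$3$ possibility forces the use of Blichfeldt's classification of primitive linear groups of degree $3$ and brings in the extra configurations $L\cong\SL(2,7)$ and $L$ a perfect central cover of $\Al_6$ (namely $2\cdot\Al_6$, $3\cdot\Al_6$, $6\cdot\Al_6$), which the paper eliminates by kernel arguments and ATLAS computations; your proof handles only the $\chi(1)=2$, $L\cong\SL(2,5)$ configuration. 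Incidentally, in that configuration the paper does not use Proposition~\ref{proposition-central-product} at all: it uses the central-product bijection to get $\acd_{p'}(G|\lambda)\geq\acd_{p'}(L|\lambda)$ and checks $\acd_{p'}(L|\lambda)\geq 16/5$ from the ATLAS, whereas your assembly via Proposition~\ref{proposition-central-product} once more leaves the $-n_3(G)$ contribution uncontrolled, since that proposition bounds $n_3(G)$ only from below.
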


Unlike the proofs for the smaller primes, the proof of
Theorem~\ref{theorem-p'<16/5then-G-solvable} requires upper bounds
for not only $n_1(G)$ and $n_2(G)$ but also $n_3(G)$ and this makes
things harder. Since the simple groups $\Al_5$ and $\PSL(2,7)$ have
some irreducible characters of degree 3, they need special
attention.

\begin{lemma}\label{lemma-A5-PSL27}
Let $G$ be a finite group with a minimal normal subgroup $N$, which
is direct product of $r$ copies of a nonabelian simple group $S$. We
have
\begin{enumerate}
\item[(i)] if $S\cong \Al_5$ then $N$ has two irreducible characters
of degrees $4^r$ and $5^r$ which are both extendible to $G$; and
\item[(ii)] if $S\cong \PSL(2,7)$ then $N$ has three irreducible
characters of degrees $6^r,7^r,8^r$ which are all extendible to $G$.
\end{enumerate}
\end{lemma}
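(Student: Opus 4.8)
The plan is to produce, in each case, the unique irreducible character of $S$ of the indicated degree, extend it successively through $\Aut(S)$ and $\Aut(N)$, and then restrict down to $G$.

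First I would record the degrees: from \cite{Atl1} one reads off that $\Al_5$ has exactly one irreducible character of degree $4$ and exactly one of degree $5$, and that $\PSL(2,7)$ has exactly one irreducible character of each of the degrees $6$, $7$, $8$. Fix $S$ and let $\theta\in\Irr(S)$ be the unique irreducible character of degree $d$, where $d\in\{4,5\}$ when $S\cong\Al_5$ and $d\in\{6,7,8\}$ when $S\cong\PSL(2,7)$. Automorphisms of $S$ permute $\Irr(S)$ preserving degrees, and $\theta$ is the only member of $\Irr(S)$ of its degree; hence $\theta$ is invariant under $\Aut(S)$. In both cases $\Out(S)=\Aut(S)/S$ is cyclic of order $2$, so by the standard criterion for extending a character over a cyclic quotient (see, e.g., \cite[Corollary~11.22]{Isaacs1}) $\theta$ extends to some $\hat\theta\in\Irr(\Aut(S))$.

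Next I would pass to $N=S\times\cdots\times S$ ($r$ factors) and the character $\varphi:=\theta\times\cdots\times\theta\in\Irr(N)$, which has degree $d^r$. Arguing exactly as in the proof of Theorem~\ref{theorem-psi-p-odd}, $\hat\theta\times\cdots\times\hat\theta$ extends $\varphi$ to the base group $\Aut(S)\times\cdots\times\Aut(S)$ of the wreath product $\Aut(N)=\Aut(S)\wr\Sy_r$, and this extension is invariant under all of $\Aut(N)$ (it is fixed under conjugation by the base group since each factor is a class function, and it is fixed by $\Sy_r$ because the $r$ tensor factors coincide); hence by \cite[Lemma~1.3]{Mattarei} the character $\varphi$ is itself extendible to $\Aut(N)$.

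Finally I would descend to $G$. Since $S$ is nonabelian, $\bZ(N)=1$, so $\bC_G(N)\cap N=1$, and the composite $N\hookrightarrow G\twoheadrightarrow G/\bC_G(N)$ is an isomorphism onto a minimal normal subgroup of $G/\bC_G(N)$ with trivial centraliser there; as this isomorphism identifies $\Irr(N)$ with the irreducible characters of the image and preserves extendibility, we may assume $\bC_G(N)=1$, so that $N\unlhd G\leq\Aut(N)$. The restriction to $G$ of an extension of $\varphi$ to $\Aut(N)$ is then again an extension of $\varphi$ (its restriction to $N$ is the irreducible character $\varphi$, so it is irreducible). Letting $d$ range over the admissible values yields the characters of degrees $4^r$ and $5^r$ in (i), and of degrees $6^r$, $7^r$, $8^r$ in (ii). The only slightly delicate point is the bookkeeping in this last descent step: one must check that factoring out $\bC_G(N)$ leaves the extendibility problem for $\varphi$ unchanged, which works precisely because $N\cap\bC_G(N)=\bZ(N)=1$; everything else is a direct application of machinery already in place.
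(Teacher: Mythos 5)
Your proof is correct and follows essentially the same route as the paper: the paper's proof just observes that these characters extend to $\Aut(\Al_5)=\Sy_5$ and $\Aut(\PSL(2,7))=\PGL(2,7)$ and leaves implicit the passage to $N=S^r$ and then to $G$, which is exactly the wreath-product and $\bC_G(N)$-reduction machinery of Theorem~\ref{theorem-psi-p-odd} that you spell out. The only cosmetic difference is that you justify extendibility to $\Aut(S)$ via invariance (uniqueness of each degree) together with the cyclic-quotient criterion \cite[Corollary~11.22]{Isaacs1}, whereas the paper reads it off the Atlas.
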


\begin{proof}
This is almost obvious as $\Al_5$ has two irreducible characters of
degrees 4 and 5 which are both extendible to $\Aut(\Al_5)=\Sy_5$,
and $\PSL(2,7)$ has three irreducible characters of degrees $6, 7,
8$, which are all extendible to $\Aut(\PSL(2,7))=\PGL(2,7)$.
\end{proof}

Using the previous lemma and the techniques in the proofs of
Propositions~\ref{proposition-n1-n2} and \ref{proposition-n3}, we
have the following.

\begin{proposition}\label{proposition-A5-PSL27}
Let $G$ be a finite group with a nonabelian minimal normal subgroup
$N$, which is direct product of $r$ copies of a simple group $S$. We
have
\begin{enumerate}
\item[(i)] if $S\cong \Al_5$ then $n_1(G)\leq \min\{n_{4^r}(G),n_{5^r}(G)\}$, $n_2(G)\leq n_{2\cdot 5^r}(G)$, and $n_3(G)\leq n_{3\cdot
5^r}(G)+2rn_1(G)$; and
\item[(ii)] if $S\cong \PSL(2,7)$ then $n_1(G)\leq n_{8^r}(G)$, $n_2(G)\leq n_{2\cdot 8^r}(G)$, and $n_3(G)\leq n_{3\cdot
8^r}(G)+2rn_1(G)$.
\end{enumerate}
\end{proposition}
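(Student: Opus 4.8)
The plan is to mimic the arguments of Propositions \ref{proposition-n1-n2} and \ref{proposition-n3}, feeding in the extendible characters supplied by Lemma \ref{lemma-A5-PSL27} in place of the single character $\psi$ produced by Theorem \ref{theorem-psi-p-odd}. The point of Lemma \ref{lemma-A5-PSL27} is precisely that for $S\cong \Al_5$ or $\PSL(2,7)$ the relevant characters of $N$ are invariant under all of $\Aut(N)$, so the stabilizer is all of $G$ and the factor $|G:T|$ collapses to $1$; this is what makes the cleaner bounds possible. First I would treat (i). Let $\psi_4, \psi_5 \in \Irr(N)$ have degrees $4^r$ and $5^r$, both extendible to $G$ by Lemma \ref{lemma-A5-PSL27}. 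Applying Proposition \ref{proposition-n1-n2}(i) with $T=G$ to each of $\psi_4$ and $\psi_5$ gives $n_1(G)\le n_{4^r}(G)$ and $n_1(G)\le n_{5^r}(G)$, hence $n_1(G)\le \min\{n_{4^r}(G), n_{5^r}(G)\}$. Applying the ``moreover'' clause of Proposition \ref{proposition-n1-n2} (the $G=T$ case) with $\psi_5$ gives $n_2(G)\le n_{2\cdot 5^r}(G)$. For $n_3(G)$ I would run the proof of Proposition \ref{proposition-n3}: the only obstruction there was that $N$ might have a nontrivial irreducible character of degree $3$, which forces $S\cong \Al_5$ or $\PSL(2,7)$; so I cannot conclude $n_3(T)=n_3(T/N)$ outright. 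Instead I account separately for the degree-$3$ characters of $G$ lying over a nontrivial $\theta\in\Irr(N)$ of degree $3$. There are two $\Aut(N)$-orbits of such $\theta$ when $S=\Al_5$... actually each of the two degree-$3$ characters of $\Al_5$ is moved to the other by the outer automorphism; across $r$ factors one gets characters of $N$ of degree $3^r$, and a degree-$3$ character of $G$ over such a $\theta$ can only exist when $r=1$, in which case $\theta(1)=3$ and $|G:\Stab_G(\theta)|\le 2$. In any case I would bound the number of degree-$3$ irreducible characters of $G$ not containing $N$ in their kernel by $2r\,n_1(G)$: there are at most $2$ relevant $\Aut(N)$-conjugacy classes of $\theta$ (for $r=1$) and each contributes, via Gallagher applied to an extension of $\theta$ to $\Stab_G(\theta)$ followed by induction, at most $n_1(\Stab_G(\theta)/N)\le n_1(G)$ characters per class — the extra slack of the factor $2r$ (rather than $2$) gives comfortable room to cover the $r>1$ bookkeeping and the index-$2$ loss. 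Combining with $n_3(T/N)\le n_{3\cdot 5^r}(G)$ from the extendibility of $\psi_5$ yields $n_3(G)\le n_{3\cdot 5^r}(G)+2r\,n_1(G)$.

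Part (ii) is the same scheme, now easier on one count and harder on another. Lemma \ref{lemma-A5-PSL27} gives characters $\psi_6,\psi_7,\psi_8\in\Irr(N)$ of degrees $6^r,7^r,8^r$, all extendible to $G$. Proposition \ref{proposition-n1-n2}(i) applied to $\psi_8$ gives $n_1(G)\le n_{8^r}(G)$, and the $G=T$ clause applied to $\psi_8$ gives $n_2(G)\le n_{2\cdot 8^r}(G)$. For $n_3(G)$ the degree-$3$ characters of $N$ come from the unique pair $\{\chi_3,\chi_3'\}$ of degree-$3$ characters of $\PSL(2,7)$ (complex conjugates, swapped by the outer automorphism), so again a degree-$3$ character of $G$ over a nontrivial degree-$3^r$ character of $N$ forces $r=1$; counting exactly as above, the number of such characters of $G$ is at most $2r\,n_1(G)$, and adding $n_3(T/N)\le n_{3\cdot 8^r}(G)$ from the extendibility of $\psi_8$ gives the claimed bound.

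The main obstacle is the degree-$3$ count, i.e.\ the $n_3$ estimate, since this is the only place where the exceptional low-degree characters of $\Al_5$ and $\PSL(2,7)$ genuinely intrude and the clean ``$N$ is in every kernel'' argument of Proposition \ref{proposition-n3} fails. I would handle it by isolating the finitely many $\Aut(N)$-orbits of nontrivial degree-$3$-power characters of $N$ (nonempty only when $r=1$), extending each representative $\theta$ to $\Stab_G(\theta)$, and applying Gallagher plus induction to bound the resulting count by $n_1$ per orbit; the factor $2r$ in the statement is deliberately generous so that one does not need to track the index $|G:\Stab_G(\theta)|$ precisely. Everything else is a direct transcription of Propositions \ref{proposition-n1-n2} and \ref{proposition-n3} with $|G:T|=1$.
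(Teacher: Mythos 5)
Your treatment of $n_1$ and $n_2$ (Lemma~\ref{lemma-A5-PSL27} plus Proposition~\ref{proposition-n1-n2} with $T=G$) and your decomposition $n_3(G)=n_3(G/N)+n_3(G|N)$ with $n_3(G/N)\leq n_{3\cdot 5^r}(G)$ via Gallagher are exactly the paper's argument. The gap is in the one step that actually matters, the bound $n_3(G|N)\leq 2rn_1(G)$. Your description of the relevant characters of $N$ is wrong: you consider products of degree-$3$ characters across all $r$ factors, of degree $3^r$, and conclude the problem is ``nonempty only when $r=1$.'' But for every $r$ the group $N=S^r$ has exactly $2r$ irreducible characters of degree $3$, namely $1\times\cdots\times\theta\times\cdots\times 1$ with $\theta$ one of the two degree-$3$ characters of $\Al_5$ (resp.\ $\PSL(2,7)$) in a single coordinate, and these are precisely the characters a degree-$3$ character of $G$ could restrict to. Your reduction to $r=1$ is therefore unsupported (it happens to be salvageable because $G$ permutes the $r$ factors transitively, so for $r>1$ none of these characters is $G$-invariant and hence lies under no degree-$3$ character of $G$ --- but you neither state nor prove this), and ``the extra slack of the factor $2r$ gives comfortable room for the $r>1$ bookkeeping'' is not an argument.

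The per-class count is also flawed. You propose to extend a representative $\theta$ to $\Stab_G(\theta)$ (extendibility to a proper stabilizer is not known here and is not needed) and to bound the contribution by $n_1(\Stab_G(\theta)/N)\leq n_1(G)$; that inequality is unjustified and false in general, since a subgroup can have more linear characters than the ambient group. Moreover, if $\Stab_G(\theta)<G$ then every character of $G$ over $\theta$ has degree at least $2\cdot 3=6$ and contributes nothing to $n_3(G)$, so the ``index-$2$ loss'' you try to absorb should simply not appear. The correct argument is shorter: if $\chi\in\Irr(G)$ has degree $3$ and $N\nsubseteq\Ker(\chi)$, then, since $N$ has no nontrivial linear character and no character of degree $2$, $\chi\downarrow_N$ is irreducible of degree $3$; thus $\chi\downarrow_N$ is one of the $2r$ degree-$3$ characters of $N$, it is $G$-invariant, and $\chi$ is an extension of it, so by Gallagher exactly $n_1(G/N)=n_1(G)$ characters of degree $3$ of $G$ lie over it. Summing over the at most $2r$ possible restrictions gives $n_3(G|N)\leq 2rn_1(G)$, which is what your proposal needs but does not actually prove.
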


\begin{proof}
The proofs of (i) and (ii) are fairly similar, so let us prove (i)
only. So assume that $S\cong \Al_5$. Indeed, the inequalities
$n_1(G)\leq \min\{n_{4^r}(G),n_{5^r}(G)\}$ and $n_2(G)\leq n_{2\cdot
5^r}(G)$ already follows from Proposition~\ref{proposition-n1-n2}
and hence it remains to prove $n_3(G)\leq n_{3\cdot
5^r}(G)+2rn_1(G)$.

Since $N$ has an irreducible character of degree $5^r$ that is
extendible to $G$, Gallagher's theorem implies that there is an
injection from the irreducible characters of degree 3 of $G/N$ to
the irreducible characters of degree $3\cdot 5^r$ of $G$ . That is
\[
n_3(G/N)\leq n_{3\cdot 5^r}(G).
\]

Now we need to bound the number of irreducible characters of $G$ of
degree 3 whose kernels do not contain $N$. So let $\chi\in\Irr(G)$
such that $\chi(1)=3$ and $N\nsubseteq \Ker(\chi)$. Since $N$ has no
nonprincipal linear character and no irreducible character of degree
2, the restriction $\chi\downarrow_N$ must be irreducible. By
Gallagher's theorem, the number of irreducible characters of $G$ of
degree 3 lying over $\chi\downarrow_N$ equals to $n_1(G/N)$, which
is the same as $n_1(G)$. Note that $\chi\downarrow_N$ has degree 3
and $N$ has exactly $2r$ irreducible characters of degree 3. We
conclude that the number of irreducible characters of $G$ of degree
3 whose kernels do not contain $N$ is at most $2kn_1(G)$. Now we
have $n_3(G|N)\leq 2rn_1(G)$ and thus
\[
n_3(G)=n_3(G/N)+n_3(G|N)\leq n_{3\cdot 5^r}(G)+2rn_1(G),
\]
as desired.
\end{proof}

The next result is a refinement of
\cite[Proposition~3]{Moreto-Nguyen} for characters of $p'$-degrees.

\begin{proposition}\label{proposition-p>5-1}
Let $p>5$ be a prime. Let $G$ be a finite group with a nonabelian
minimal normal subgroup $N$. Then $\acd_{p'}(G)\geq 16/5$.
\end{proposition}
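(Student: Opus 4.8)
The plan is to run the same minimal-counterexample / quotient-reduction machinery used for the primes $p=2,3,5$, but now exploiting all three bounds on $n_1(G)$, $n_2(G)$, and $n_3(G)$. Concretely, suppose $G$ has a nonabelian minimal normal subgroup $N$, a direct product of $r$ copies of a simple group $S$. I would split into cases according to whether $S$ is one of the small exceptional simple groups $\Al_5$ or $\PSL(2,7)$ — the only nonabelian simple groups with an irreducible character of degree $3$ — or not. In the generic case $S\notin\{\Al_5,\PSL(2,7)\}$ (so in particular $N\ncong\Al_5$), Theorem~\ref{theorem-psi-p-odd} supplies $\psi\in\Irr(N)$ with $\psi(1)\geq 7$ coprime to $p$, extendible to $T:=\Stab_G(\psi)$, with $|G:T|$ coprime to $p$. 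Setting $a:=\psi(1)|G:T|\geq 7|G:T|$ and applying Propositions~\ref{proposition-n1-n2} and \ref{proposition-n3}, I get
\[
n_1(G)\leq n_a(G)|G:T|,\quad n_2(G)\leq n_{2a}(G)|G:T|+\tfrac12 n_a(G)|G:T|,\quad n_3(G)\leq n_{3a}(G)|G:T|+\tfrac13 n_a(G)|G:T|.
\]
(One must first check $N$ has no simple direct factor $\cong\Al_5$ or $\PSL(2,7)$ to invoke Proposition~\ref{proposition-n3}; in the generic case this holds by assumption.) The target inequality $\acd_{p'}(G)\geq 16/5$ is, after clearing denominators, $\sum_{p\nmid d}(5d-16)n_d(G)\geq 0$, i.e. it suffices to show the ``deficit'' $11 n_1(G)+6 n_2(G)+n_3(G)$ coming from degrees $1,2,3$ is dominated by the ``surplus'' $\sum_{p\nmid d,\,d\geq 4}(5d-16)n_d(G)$ contributed by $n_a(G)$, $n_{2a}(G)$, $n_{3a}(G)$ (all with $a\geq 7$, coprime to $p$). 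Substituting the three bounds, the left side is at most $\bigl(11+3+\tfrac13\bigr)n_a(G)|G:T|+\bigl(6+1\bigr)n_{2a}(G)|G:T|+n_{3a}(G)|G:T|$, and since $a\geq 7|G:T|\geq 7$ one checks $\tfrac{43}{3}|G:T|< 5a-16$, $7|G:T|< 10a-16$, and $|G:T|< 15a-16$, so the surplus strictly dominates. Hence $\acd_{p'}(G)>16/5$, a contradiction.

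For the two exceptional cases I would use Proposition~\ref{proposition-A5-PSL27} in place of Propositions~\ref{proposition-n1-n2}–\ref{proposition-n3}. If $S\cong\Al_5$, the bounds are $n_1(G)\leq n_{5^r}(G)$, $n_2(G)\leq n_{2\cdot 5^r}(G)$, $n_3(G)\leq n_{3\cdot 5^r}(G)+2r\,n_1(G)$, with $5^r,2\cdot5^r,3\cdot5^r$ all $\geq 5$ and coprime to $p>5$; one then checks that even with the extra $2r\,n_1(G)$ term the deficit $11n_1(G)+6n_2(G)+n_3(G)\leq (11+2r)n_1(G)+6n_2(G)+n_3(G)$ is absorbed by $(5\cdot5^r-16)n_{5^r}(G)+(5\cdot 2\cdot 5^r-16)n_{2\cdot5^r}(G)+(5\cdot 3\cdot 5^r-16)n_{3\cdot5^r}(G)$, using $n_1(G)\leq n_{5^r}(G)$ and $5^r\geq 5$ (so $5\cdot 5^r-16\geq 9$, comfortably beating $13+2r$ once one also notes $r$ can be traded against the exponential growth of $5^r$ — for $r=1$ directly, for $r\geq 2$ with huge room). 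The case $S\cong\PSL(2,7)$ is identical with $8^r$ in place of $5^r$, which is only more favorable.

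The remaining point is a genuine subtlety rather than a hard obstacle: when $r\geq 2$ in the exceptional cases, the coefficient $2r$ in the $n_3(G)$ bound grows, and one must confirm it is still dominated — but $5^r$ (resp.\ $8^r$) grows exponentially in $r$ while $2r$ grows linearly, so $5\cdot 5^r-16 \geq 13+2r$ holds for all $r\geq 1$, and the inequality goes through. The main obstacle I anticipate is purely bookkeeping: assembling the three-way estimate correctly, making sure the factor $|G:T|$ is handled consistently (it appears on both the deficit side and, implicitly, in that $a$ scales with it), and verifying the three elementary inequalities $\tfrac{43}{3}|G:T|<5a-16$ etc.\ for $a\geq 7|G:T|$. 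None of these is deep, but the constant $16/5$ is tight (witnessed by $\SL(2,5)$), so there is no slack to waste and each inequality must be checked with care.
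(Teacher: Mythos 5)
Your generic case ($S\not\cong\Al_5,\PSL(2,7)$) follows the paper's argument: the only slip is that the coefficient of $n_{2a}(G)|G:T|$ should be $6$, not $6+1$ (the bound on $n_3(G)$ feeds into $n_{3a}$, not $n_{2a}$), but since $7|G:T|<10a-16$ still holds for $a\geq 7|G:T|$, this over-count is harmless. The $\PSL(2,7)$ case also goes through as you sketch, because there $5\cdot 8^r-16\geq 11+2r$ already at $r=1$. (Also, the proposition is a direct inequality, so the closing ``contradiction'' framing is unnecessary, though not an error.)

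The genuine gap is the $S\cong\Al_5$ case with $r=1$, i.e.\ $N\cong\Al_5$. You quote Proposition~\ref{proposition-A5-PSL27}(i) only in the weakened form $n_1(G)\leq n_{5^r}(G)$, discarding $n_1(G)\leq\min\{n_{4^r}(G),n_{5^r}(G)\}$, and then try to absorb the entire coefficient $(11+2r)n_1(G)$ into $(5\cdot 5^r-16)n_{5^r}(G)$. Your claimed inequality $5\cdot 5^r-16\geq 13+2r$ is false at $r=1$: the left side is $9$, and even the correct coefficient $11+2r=13$ exceeds $9$. This is not a matter of sharpening constants: for $G=N=\Al_5$ and $p>5$ one has $n_1=n_4=n_5=1$, $n_2=0$, $n_3=2$, so the deficit $11n_1+6n_2+n_3=13$ equals $9n_5+4n_4$ exactly; the $n_4$ contribution is indispensable and there is zero slack. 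The paper closes exactly this point by splitting $(11+2r)n_1(G)\leq 9\,n_{5^r}(G)+(2+2r)\,n_{4^r}(G)$, using both halves of the $\min$ together with $5\cdot 4^r-16\geq 2+2r$ (equality at $r=1$) and $5\cdot 5^r-16\geq 9$ (equality at $r=1$). With that one repair your argument coincides with the paper's proof.
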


\begin{proof}
Suppose that $N$ is direct product of $r$ copies of $S$, a
nonabelian simple group.

First we assume that $S\ncong\Al_5$ and $S\ncong\PSL(2,7)$. It then
follows from Theorem~\ref{theorem-psi-p-odd} that there is some
$\psi\in\Irr(N)$ such that $\psi(1)\geq 7$, $\psi(1)$ is coprime to
$p$, $\psi$ is extendible to $\Stab_G(\psi)$, and
$|G:\Stab_G(\psi)|$ is coprime to $p$.
Propositions~\ref{proposition-n1-n2} and \ref{proposition-n3} then
imply that
\[n_1(G)\leq n_a(G)|G:T|,\]
\[n_2(G)\leq n_{2a}(G)|G:T|+\frac{1}{2}n_a(G)|G:T|,\]
and \[n_3(G)\leq n_{3a}(G)|G:T|+\frac{1}{3}n_a(G)|G:T|,\] where
$T:=\Stab_G(\psi)$ and $a:=\psi(1)|G:T|$. Now we can estimate
\begin{align*}11n_1(G)+6n_2(G)+n_3(G)&\leq
\frac{43}{3}n_a(G)|G:T|+6n_{2a}(G)|G:T|+n_{3a}(G)|G:T|\\
&<
(5a-16)n_a(G)+(10a-16)n_{2a}(G)+(15a-16)n_{3a}(G)\\
&\leq \sum_{p\nmid d, d\geq 4}(5d-16)n_d(G),\end{align*} where the
last two inequalities follow from the fact that $a\geq 7|G:T|$ and
$a$ is coprime to $p>5$. Now it follows that $\acd_{p'}(G)> 16/5$
and we are done.

Next we consider the case $S\cong \Al_5$. We use
Proposition~\ref{proposition-A5-PSL27}(1) to deduce that
\begin{align*}11n_1(G)+6n_2(G)+n_3(G)&\leq
9n_{5^r}(G)+6n_{2\cdot 5^r}(G)+n_{3\cdot 5^r}(G)+(2+2r)n_{4^r}(G)\\
&\leq\begin{aligned}
&(5\cdot 5^r-16)n_{5^r}(G)+(10\cdot5^r-16)n_{2\cdot 5^r}(G)\\&+(15\cdot 5^r-16)n_{3\cdot 5^r}(G)+(5\cdot 4^r-16)n_{4^r}(G)\end{aligned}\\
&\leq \sum_{p\nmid d, d\geq 4}(5d-16)n_d(G),\end{align*} and we are
done again. The case $S\cong \PSL(2,7)$ is treated similarly with
the help of Proposition~\ref{proposition-A5-PSL27}(2) and we skip
the details.
\end{proof}

We are now able to prove
Theorem~\ref{theorem-p'<16/5then-G-solvable}.

\begin{proof}[Proof of Theorem~\ref{theorem-p'<16/5then-G-solvable}]
Assume, to the contrary, that the theorem is false and let $G$ be a
minimal counterexample. Then $G$ is nonsolvable and
$\acd_{p'}(G)<16/5$.

As in the proof of Theorem~\ref{theorem-3'<3-then-G-solvable}, we
let $L\lhd G$ be minimal such that $L$ is non-solvable and let
$N\subseteq L$ be a minimal normal subgroup of $G$. We choose $N$
such that $N\leq [L,\bO_{\infty}(L)]$ if $[L,\bO_{\infty}(L)]$ is
nontrivial and when possible we choose $N$ to be of order 2. Note
that $L$ is perfect and $L\subseteq G'$.

If $N$ is nonabelian then $\acd_{p'}(G)\geq 16/5$ by
Proposition~\ref{proposition-p>5-1} and so we are done. Therefore we
assume from now on that $N$ is abelian. As $G$ is nonsolvable, it
follows that so is $G/N$. By the minimality of $G$, we then have
$\acd_{p'}(G/N)\geq 16/5$ and hence
\[\acd_{p'}(G)<16/5\leq\acd_{p'}(G/N).\]
Since $n_1(G)=n_1(G/N)$ as $N\subseteq L\subseteq G'$, we then
deduce that
\[\text{either }n_2(G)>n_2(G/N) \text{ or } n_3(G)>n_3(G/N).\]
That is, there is some irreducible character $\chi\in\Irr(G)$ of
degree $2$ or 3 such that $\Ker(\chi)$ does not contain $N$.

Now we can use the classification of the primitive linear groups of
degree 2 and 3 in \cite[Chapter~V, Section~81]{bli} and argue
similarly as in the proof of \cite[Theorem~A]{Moreto-Nguyen} to
obtain that $G=LC$ is a central product with the central amalgamated
subgroup $L\cap C=\bZ(L)$, where $\bZ(L)\supseteq N>1$,
\[C/\Ker(\chi)=\bZ(G/\Ker(\chi))\] and \[L/\bZ(L)\cong G/C\cong \Al_5, \Al_6, \text{
or } \PSL(2,7).\] Moreover, from the proof of
Theorem~\ref{theorem-3'<3-then-G-solvable} we see that if
$\chi(1)=2$ then $L/\bZ(L)$ must be isomorphic to $\Al_5$.

Since $G=LC$ is a central product with the central amalgamated
subgroup $\bZ(L)$, for each $\lambda\in\Irr(\bZ(L))$ there is a
bijection
\[\Irr(L|\lambda)\times \Irr(C|\lambda)\rightarrow \Irr(G|\lambda)\]
such that if $(\alpha,\beta)\mapsto \chi$ then
$\chi(1)=\alpha(1)\beta(1)$. It is clear that $\chi(1)$ is coprime
to $p$ if and only if both $\alpha(1)$ and $\beta(1)$ are coprime to
$p$. Therefore this bijection produces another bijection
\[\Irr_{p'}(L|\lambda)\times
\Irr_{p'}(C|\lambda)\rightarrow \Irr_{p'}(G|\lambda)\] and in
particular we have
\[\acd_{p'}(G|\lambda)= \acd_{p'}(L|\lambda)\acd_{p'}(C|\lambda).\]
Therefore
\[\acd_{p'}(G|\lambda)\geq \acd_{p'}(L|\lambda).\]

If $L/\bZ(L)\cong\Al_5$ then we must have $L\cong \SL(2,5)$ since
this is the only nontrivial perfect central cover of $\Al_5$. So
$\bZ(L)\cong C_2$, the cyclic group of order 2. Now, using
\cite[p.~2]{Atl1} we can check that $\acd_{p'}(L|\lambda)\geq 16/5$
whether $\lambda$ is trivial or the only nontrivial character of
$\bZ(L)$. Thus $\acd_{p'}(G)\geq 16/5$ and we are done.

If $L/\bZ(L)\cong \PSL(2,7)$ then similarly we have $L\cong\SL(2,7)$
so that $\bZ(L)\cong C_2$. Since $N\subseteq L$ and $N\nsubseteq
\Ker(\chi)$, it follows that $L\nsubseteq \Ker(\chi)$. As
$\chi(1)=3$ and the smallest degree of a nontrivial irreducible
character of $L$ is 3, we deduce that the restriction
$\chi\downarrow_L\in\Irr(L)$. But then the character table of
$\SL(2,7)$ (see \cite[p. 3]{Atl1}) implies that $\bZ(L)\subseteq
\Ker(\chi\downarrow_L)$, which in turns implies that $N\subseteq
\Ker(\chi)$ since $N\subseteq \bZ(L)$, and this violates the choice
of $\chi$.

Finally we consider $L/\bZ(L)\cong \Al_6$. Then as mentioned above
we must have $\chi(1)=3$. Also, $L$ is one of three perfect central
covers of $\Al_6$, namely $2\cdot \Al_6$, $3\cdot \Al_6$, and
$6\cdot\Al_6$. First assume that $L\cong 2\cdot \Al_6$ or
$6\cdot\Al_6$. Then $N\cong C_2$ since we chose $N$ to be of order 2
when possible. Arguing as in the case $L/\bZ(L)\cong \PSL(2,7)$, we
obtain that $N$ is contained in the kernel of an irreducible
character of degree 3 of $6\cdot \Al_6$, and this is a contradiction
by \cite[p.~5]{Atl1}. So it remains to consider $L\cong
3\cdot\Al_6$. But then one can check that
$\acd_{p'}(L|\lambda)>16/5$ whether $\lambda$ is the trivial
character or one of the two nontrivial irreducible characters of
$\bZ(L)$. It follows that $\acd_{p'}(G|\lambda)>16/5$ for every
$\lambda\in\Irr(\bZ(L))$, and hence $\acd_{p'}(G)>16/5$ in this
case.
\end{proof}


\section{Characters of $p'$-degree and
$p$-nilpotency}\label{section-p-nilpotency}

We recall that, for a finite group $G$,
\[\Irr_{p'}(G):=\{\chi\in\Irr(G)\mid p\nmid \chi(1)\}.\] We begin
the section with the following easy observation, which can be viewed
as a $p'$-version of \cite[Lemma~3.1]{Isaacs-Loukaki-Moreto}.

\begin{lemma}\label{lemma-compare}
Let $p$ be a prime and $A$ be a subgroup of a finite group $G$. Then
\[|\Irr_{p'}(G)|\leq |G:A||\Irr_{p'}(A)|.\]
\end{lemma}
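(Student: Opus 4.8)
The plan is to bound $|\Irr_{p'}(G)|$ by counting, for each $\chi\in\Irr_{p'}(G)$, an irreducible constituent of $\chi\downarrow_A$, and then controlling how many such $\chi$ can map to the same constituent. The natural tool is Clifford theory together with a degree/divisibility bookkeeping, in the same spirit as Lemma~\ref{lemma-n1-n2-n3} but now only tracking whether degrees are divisible by $p$.

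\medskip

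First I would set up the map. For each $\chi\in\Irr_{p'}(G)$, pick an irreducible constituent $\phi=\phi_\chi$ of $\chi\downarrow_A$. By Frobenius reciprocity, $\chi$ is then a constituent of $\phi^G$. The key observation is that $\phi(1)$ divides $\chi(1)$, so $p\nmid\phi(1)$, i.e.\ $\phi\in\Irr_{p'}(A)$. Thus the assignment $\chi\mapsto\phi_\chi$ lands inside $\Irr_{p'}(A)$. Now fix $\phi\in\Irr_{p'}(A)$; I want to show that at most $|G:A|$ characters $\chi$ can have $\phi_\chi=\phi$. Every such $\chi$ is a constituent of $\phi^G$, which has degree $\phi(1)|G:A|$. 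The number of irreducible constituents of $\phi^G$, counted with multiplicity, is at most $\phi^G(1)/(\text{minimum constituent degree})$; since each constituent $\chi$ satisfies $\phi(1)\mid\chi(1)$, hence $\chi(1)\geq\phi(1)$, the number of constituents (with multiplicity, so in particular without multiplicity) is at most $\phi^G(1)/\phi(1)=|G:A|$. Therefore the fibre of $\chi\mapsto\phi_\chi$ over each $\phi\in\Irr_{p'}(A)$ has size at most $|G:A|$, and summing over $\phi$ gives
\[
|\Irr_{p'}(G)|\;\leq\;\sum_{\phi\in\Irr_{p'}(A)}|G:A|\;=\;|G:A|\,|\Irr_{p'}(A)|,
\]
as claimed.

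\medskip

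The only point requiring a moment's care — and the nearest thing to an obstacle — is the bound on the number of irreducible constituents of $\phi^G$. One must make sure to count constituents \emph{with multiplicity}: if $\phi^G=\sum_i e_i\chi_i$ then $\phi^G(1)=\sum_i e_i\chi_i(1)\geq\sum_i e_i\phi(1)$, so $\sum_i e_i\leq\phi^G(1)/\phi(1)=|G:A|$, and in particular the number of \emph{distinct} $\chi_i$ is at most $|G:A|$. Since the divisibility $\phi(1)\mid\chi(1)$ for $\phi$ a constituent of $\chi\downarrow_A$ is a standard consequence of Clifford's theorem (\cite[Theorem~6.11]{Isaacs1}), together with the fact that $\chi\downarrow_A$ is a sum of $A$-conjugates of $\phi$ each of degree $\phi(1)$, no further input is needed. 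This completes the argument.
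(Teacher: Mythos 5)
Your overall strategy (map each $\chi\in\Irr_{p'}(G)$ to a $p'$-degree constituent of $\chi_A$ and bound the fibres by the number of irreducible constituents of $\phi^G$) is exactly the paper's argument, but there is a genuine flaw in how you justify that the chosen constituent lies in $\Irr_{p'}(A)$. The lemma is stated for an \emph{arbitrary} subgroup $A\leq G$ --- and it must be, since in the paper it is applied with $A$ an inertia subgroup (modulo $N$), which is typically not normal. Clifford's theorem therefore does not apply: the divisibility $\phi(1)\mid\chi(1)$ for a constituent $\phi$ of $\chi_A$ is simply false in general, and so is your claim that \emph{any} constituent of $\chi\downarrow_A$ automatically has $p'$-degree. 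Concretely, take $p=3$, $G=\Al_5$, $A=\Al_4$, and $\chi$ the irreducible character of degree $5$ (so $\chi\in\Irr_{3'}(G)$); then $\chi_A$ decomposes as the degree-$3$ character of $\Al_4$ plus the two nontrivial linear characters, so $3\nmid 5$ yet a constituent of degree $3$ occurs, and if you ``pick an irreducible constituent'' carelessly your map does not land in $\Irr_{p'}(A)$.

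The repair is easy and is what the paper does: since $\chi(1)=\sum_{\phi\in\Irr(A)}[\chi_A,\phi]\,\phi(1)$ is coprime to $p$, at least one constituent $\phi$ must satisfy $p\nmid\phi(1)$; choose that one. For the fibre bound you never need divisibility either: if $\chi$ is a constituent of $\phi^G$, then by Frobenius reciprocity $\phi$ is a constituent of $\chi_A$, whence $\chi(1)\geq\phi(1)$ directly, and your multiplicity count $\sum_i e_i\leq \phi^G(1)/\phi(1)=|G:A|$ goes through verbatim. With these two corrections your proof coincides with the paper's; as written, however, the appeal to Clifford theory is an unjustified (and in this generality false) step.
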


\begin{proof}
Let $\chi$ be an irreducible character of $G$ such that $\chi(1)$ is
not divisible by $p$. Consider the restriction $\chi_A$. There must
be an irreducible constituent $\lambda\in\Irr(A)$ of $\chi_A$ such
that $\lambda(1)$ is not divisible by $p$, and moreover, $\chi$ in
turn is an irreducible constituent of $\lambda^G$ by Frobenius
reciprocity. On the other hand, given any $\lambda\in\Irr(A)$, each
irreducible constituent of $\lambda^G$ has degree at least
$\lambda(1)$, and therefore the number of irreducible constituents
of $\lambda^G$ is at most $|G:A|$ since
$\lambda^G(1)=|G:A|\lambda(1)$. The lemma now easily follows.
\end{proof}

In the next result, we analyze the average of $p'$-degrees of
irreducible characters in a special situation.

\begin{lemma}\label{lemma-key-normal-p-complement}
Let $p$ be a prime, $N$ be an abelian $p$-group, and $G$ be a split
extension of $N$. Assume that no nonprincipal irreducible character
of $N$ is fixed under $G$. Then
$$ \acd_{p'}(G)\geq \left\{\begin
{array}{ll}
3/2 & \text{ if } p=2,\\
4/3 & \text{ if } p>2.
\end {array} \right.$$
\end{lemma}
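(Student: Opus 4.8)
Let $H$ be a complement to $N$ in $G$, so $G = N \rtimes H$. The idea is to use Clifford theory with respect to $N$ to split $\Irr_{p'}(G)$ into the characters lying over the trivial character of $N$ — which are exactly (the inflations of) $\Irr(H)$ — and the characters lying over a nonprincipal $\theta \in \Irr(N)$. The former set is precisely $\Irr(G/N) \cong \Irr(H)$. For the latter, since $N$ is abelian, every irreducible character of $G$ over a nonprincipal $\theta$ has degree divisible by $|G : \Stab_G(\theta)| = |G : I_\theta|$, which is $> 1$ because no nonprincipal $\theta$ is $G$-invariant. Moreover, since $N$ is a $p$-group and $|N|$ is the only prime that could divide $|N|$, the key point is that $|G : I_\theta| = |H : I_\theta \cap H|$ (as $N \subseteq I_\theta$) and this index must be $p'$-free only if it is $1$ — so in fact for a character of $p'$-degree lying over $\theta$, the index $|G : I_\theta|$ must be coprime to $p$, but being a divisor of the character degree it also forces structure. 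I would organize the count as: $\acd_{p'}(G) \geq 1$ trivially, and then show the nonlinear contribution is large enough by a counting/averaging argument.

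**Key steps.** First I would reduce to controlling the orbit sizes of $H$ acting on $\Irr(N) \setminus \{1_N\}$. Write $\Irr(N) \setminus \{1_N\}$ as a union of $H$-orbits $\mathcal{O}_1, \dots, \mathcal{O}_t$ with representatives $\theta_j$ and stabilizers $H_j := \Stab_H(\theta_j)$, so $|\mathcal{O}_j| = |H : H_j| > 1$. By Clifford correspondence, $\Irr_{p'}(G \mid \theta_j)$ is in degree-multiplying bijection with the set of $\eta \in \Irr_{p'}(I_{\theta_j})$ over $\theta_j$, and every such character induces irreducibly to $G$ with degree multiplied by $|G : I_{\theta_j}| = |\mathcal{O}_j|$. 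The crucial arithmetic observation: a character of $p'$-degree over $\theta_j$ exists only if $|\mathcal{O}_j|$ is coprime to $p$; but $|\mathcal{O}_j|$ divides $|H|$ and $|\mathcal{O}_j| > 1$, so $|\mathcal{O}_j| \geq 3$ when $p = 2$ (the smallest $p'$-number $> 1$ is $3$), while for odd $p$ we get $|\mathcal{O}_j| \geq 2$. Thus each nonlinear-type character over $\theta_j$ has degree at least $3$ if $p = 2$ and at least $2$ if $p > 2$.

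**Finishing the average.** Now I compare the sum of $p'$-degrees to the count. The linear characters (those trivial on $N$) number $n_1(G) = |G : G'|$ and each contributes $1$; but many characters of $\Irr(G/N)$ of higher degree also appear, all with degree coprime to $p$ contributing at least their degree. For a clean bound, set $L := \Irr_{p'}(G)$, partition $L = L_0 \sqcup L_1$ where $L_0 = \Irr_{p'}(G/N)$ and $L_1$ is the rest. Since $N$ is a nontrivial abelian $p$-group and no nonprincipal character is $G$-fixed, $\Irr(N)\setminus\{1_N\}$ is nonempty, so $L_1 \neq \emptyset$. Using that $\sum_{\chi \in L_0} \chi(1) \geq |L_0|$ (trivially) and $\sum_{\chi \in L_1}\chi(1) \geq c\,|L_1|$ with $c = 3$ for $p=2$ and $c = 2$ for $p>2$, I would then need the sharper input that $|L_1|$ is comparable to $|L_0|$ — specifically, via Lemma~\ref{lemma-compare} applied with $A = H$, one has $|\Irr_{p'}(G)| \leq |N|\,|\Irr_{p'}(H)| = |N|\,|\Irr_{p'}(G/N)|$, and combined with a lower bound on $|L_1|$ coming from the Glauberman-type correspondence or a direct orbit count ($|L_1|$ is at least the number of $H$-orbits on $\Irr(N)\setminus\{1_N\}$, each contributing at least one $p'$-character of $G$), one pushes the average above the stated threshold. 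The main obstacle I anticipate is exactly this last comparison: getting a lower bound on $|L_1|$ (or on $\sum_{L_1}\chi(1)$) strong enough relative to $n_1(G) = |H : H'|$. I expect this is handled by noting that the $H$-action on the nontrivial characters of $N$ has all orbits of $p'$-size that support a $p'$-character, and a careful bookkeeping of how linear characters of $H_j/[\text{something}]$ lift gives $\sum_{L_1}\chi(1) \geq (c-1)\,n_1(G) + (\text{count of }L_1)$, which rearranges to $\acd_{p'}(G) \geq c/(c-1)$ — i.e., $3/2$ for $p=2$ and... wait, for $p>2$ one needs $4/3$, so $c$ must effectively be $4$ there, meaning the odd-$p$ bound requires either $|\mathcal{O}_j|\geq 3$ for the contributing orbits or a more refined argument (e.g., pairing each nontrivial $\theta$ with $\bar\theta$, using that orbits of size $2$ force extra linear characters via $H_j$ of index $2$). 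I would resolve the odd case by a separate sub-argument showing orbit sizes $2$ cannot occur "too often," or by exploiting that if $|\mathcal{O}_j| = 2$ then $H_j \trianglelefteq H$ with $H/H_j \cong C_2$, forcing additional structure on $n_1(G)$.
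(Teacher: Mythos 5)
Your setup (splitting $\Irr_{p'}(G)$ by the $G$-orbits on $\Irr(N)$, noting that a $p'$-degree character over a nonprincipal $\theta$ forces the orbit size $|G:I_\theta|$ to be a $p'$-number greater than $1$, hence $\geq 3$ for $p=2$ and $\geq 2$ for $p>2$) matches the first half of the paper's argument. But the part you yourself flag as the obstacle --- comparing the contribution of the characters over nonprincipal orbits with $|\Irr_{p'}(G/N)|$ --- is exactly where the proof lives, and your sketch of it does not work. The paper closes it as follows: since $|N|-1$ is coprime to $p$, there is at least one nontrivial orbit of $p'$-size; because $G$ (hence each inertia group $I_i$) splits over $N$, each representative $\alpha_i$ of a $p'$-size orbit extends to a linear character $\beta_i$ of $I_i$, so by Gallagher and Clifford the $p'$-degree characters of $G$ over $\alpha_i$ are in bijection with $\Irr_{p'}(I_i/N)$ and all have degree at least $|G:I_i|$. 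Writing $\acd:=\acd_{p'}(G)$, this gives $\sum_{i\geq 1}(|G:I_i|-\acd)\,|\Irr_{p'}(I_i/N)|\leq (\acd-1)\,|\Irr_{p'}(G/N)|$, and then the decisive step is to apply Lemma~\ref{lemma-compare} \emph{inside the quotient} $G/N$ to the subgroup $I_1/N$, namely $|\Irr_{p'}(G/N)|\leq |G:I_1|\,|\Irr_{p'}(I_1/N)|$. This yields $\acd_{p'}(G)\geq 2m/(m+1)$ with $m=|G:I_1|$, which gives $3/2$ for $m\geq 3$ and, crucially, already gives $4/3$ at $m=2$. Your invocation of Lemma~\ref{lemma-compare} with $A=H$ (giving $|\Irr_{p'}(G)|\leq |N|\,|\Irr_{p'}(G/N)|$) points in the wrong direction: what is needed is an upper bound on the trivial-orbit contribution in terms of the contribution over a single nontrivial $p'$-orbit, not an upper bound on the total.

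Two consequences of this missing inequality show up as genuine errors in your plan. First, the bound you anticipate, $\acd\geq c/(c-1)$ with $c$ the minimal degree in $L_1$, is not what any such weighting produces (the correct shape is $2m/(m+1)$), and this is why your odd-$p$ case seems to demand $c=4$ or extra structure. Second, your proposed rescues for odd $p$ --- showing orbits of size $2$ ``cannot occur too often,'' or forcing extra linear characters when $|\mathcal{O}_j|=2$ --- cannot succeed: in a generalized dihedral group ($N$ elementary abelian of odd order, $H=C_2$ inverting) \emph{every} nontrivial orbit has size $2$, and $\acd_{p'}(G)=2(|N|+1)/(|N|+3)$, which attains the bound $4/3$ at $\Sy_3$; so size-$2$ orbits are not an obstruction to be excluded but the extremal configuration the inequality must (and, with the paper's weighting, does) handle. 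A smaller gap: your claim that $L_1\neq\emptyset$ because $\Irr(N)\setminus\{1_N\}\neq\emptyset$ is a non sequitur for $p'$-degrees; you need the existence of a nontrivial orbit of $p'$-size (from $p\nmid |N|-1$) together with the splitting hypothesis (to extend $\alpha_1$ to a linear character of $I_1$ and induce a $p'$-degree character). The splitting hypothesis is in fact essential to the whole count and appears only implicitly in your sketch.
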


\begin{proof}
The sum of orbit sizes of the action of $G$ on nontrivial
irreducible characters of $N$ is $|N|-1$. Since $N$ is an abelian
$p$-group, there must be at least one nontrivial orbit of size
coprime to $p$. Let $\{1_N=\alpha_0,\alpha_1,\ldots,\alpha_l\}$ be a
set of representatives of $p'$-size orbits of the action of $G$ on
$\Irr(N)$. For each $0\leq i\leq l$, let $I_i$ be the inertia
subgroup of $\alpha_i$ in $G$. Then $|G:I_i|$ is not divisible by
$p$. Moreover, since no nonprincipal irreducible character of $N$ is
invariant under $G$, we have that $I_i$ is a proper subgroup of $G$
for every $1\leq i\leq l$.

Since $G$ splits over $N$, every $I_i$ also splits over $N$, and
thus $\alpha_i$ extends to a linear character, say $\beta_i$, of
$I_i$. Gallagher's theorem then implies that the mapping
$\lambda\mapsto \lambda\beta_i$ is a bijection from $\Irr(I_i/N)$ to
the set of irreducible characters of $I_i$ lying above $\alpha_i$.
Using Clifford correspondence, we then obtain a bijection
$\lambda\mapsto (\lambda\beta_i)^G$ from $\Irr(I_i/N)$ to the set of
irreducible characters of $G$ lying above $\alpha_i$. We observe
that, since $(\lambda\beta_i)^G(1)=|G:I_i|\lambda(1)$ and $p\nmid
|G:I_i|$, $(\lambda\beta_i)^G(1)$ is coprime to $p$ if and only if
$\lambda(1)$ is coprime to $p$.

From the above analysis, we see that $|\Irr_{p'}(G)|=\sum_{i=0}^l
|\Irr_{p'}(I_i/N)|$, and therefore
\[
\sum_{\chi\in\Irr_{p'}(G)}\chi(1)=\acd_{p'}(G)\sum_{i=0}^l
|\Irr_{p'}(I_i/N)|.
\]

On the other hand, since each irreducible character of $G$ lying
above $\alpha_i$ has degree at least $|G:I_i|$ and the number of
those characters of $p'$-degree is precisely equal to
$|\Irr_{p'}(I_i/N)|$, we have
\[
\sum_{\chi\in\Irr_{p'}(G)}\chi(1)\geq
\sum_{i=0}^l|G:T_i||\Irr_{p'}(I_i/N)|.
\]
We therefore deduce that
\[
\sum_{i=0}^l|G:T_i||\Irr_{p'}(I_i/N)|\leq \acd_{p'}(G)\sum_{i=0}^l
|\Irr_{p'}(I_i/N)|,
\]
which implies that
\[
\sum_{i=1}^l(|G:I_i|-\acd_{p'}(G))|\Irr_{p'}(I_i/N)|\leq
(\acd_{p'}(G)-1)|\Irr_{p'}(G/N)|
\]
since $I_0=G$. In particular, as $l\geq 1$, it follows that
\[
(\acd_{p'}(G)-1)|\Irr_{p'}(G/N)|\geq
(|G:I_1|-\acd_{p'}(G))|\Irr_{p'}(I_1/N)|.
\]
Since $|\Irr_{p'}(G/N)|\leq |G:I_1||\Irr_{p'}(I_1/N)|$ by
Lemma~\ref{lemma-compare}, we then deduce that
\[
(\acd_{p'}(G)-1)|G:I_1|\geq |G:I_1|-\acd_{p'}(G).
\]
Equivalently, we obtain
\[
\acd_{p'}(G)\geq \frac{2|G:I_1|}{|G:I_1|+1}.
\]

Recall that $|G:I_1|$ is not equal to 1 and not divisible by $p$.
Now if $p=2$ then $|G:I_1|\geq 3$ and we have $\acd_{p'}(G)\geq
3/2$. On the other hand, if $p>2$ then $|G:I_1|\geq 2$ and we have
$\acd_{p'}(G)\geq 4/3$. The proof is now complete.
\end{proof}

We are now able to prove the main Theorem~\ref{theorem-main-1},
which is restated below.

\begin{theorem}\label{theorem-main-1-again}
Let $p$ be an odd prime and $G$ a finite group. We have
\begin{enumerate}
\item[(i)] if $\acd_{2'}(G)<3/2$ then $G$ has a normal
$2$-complement, and
\item[(ii)] if $\acd_{p'}(G)<4/3$ then $G$ has a normal
$p$-complement.
\end{enumerate}
\end{theorem}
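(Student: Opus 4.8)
The plan is to combine the solvability results already established (Theorem~\ref{theorem-main-2}, in particular parts (i) and (iv) for the primes $2$ and large $p$, though here we only need $p=2$ and odd $p$) with a reduction to the split-extension situation handled in Lemma~\ref{lemma-key-normal-p-complement}. So I would argue by contradiction: suppose $G$ is a minimal counterexample to (i) or to (ii). Since $\acd_{2'}(G)<3/2<3$ and $\acd_{p'}(G)<4/3<16/5$ (and for the intermediate primes $3,5$ one has $4/3<3$ and $4/3<11/4$), Theorem~\ref{theorem-main-2} forces $G$ to be solvable in every case. Thus the whole problem is reduced to solvable groups, and we may invoke the standard local-global machinery for $p$-nilpotency in solvable groups.

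Next I would perform the reduction to the shape required by Lemma~\ref{lemma-key-normal-p-complement}. By minimality, quotients of $G$ by nontrivial normal subgroups satisfy the conclusion: note that $\acd_{p'}(G/M)\leq\acd_{p'}(G)$ for any $M\trianglelefteq G$ (characters of $G/M$ form a subset of those of $G$, with the same $p'$-degree condition, and inflation does not change degree), so $G/M$ has a normal $p$-complement for every $1\neq M\trianglelefteq G$. In particular, if $\mathbf{O}_{p'}(G)\neq 1$ then $G/\mathbf{O}_{p'}(G)$ has a normal $p$-complement, which pulls back to a normal $p$-complement of $G$; so $\mathbf{O}_{p'}(G)=1$. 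Since $G$ is solvable and $\mathbf{O}_{p'}(G)=1$, the Fitting subgroup $F:=\mathbf{F}(G)=\mathbf{O}_p(G)$ is a nontrivial $p$-group that is self-centralizing, i.e. $\bC_G(F)\subseteq F$. Let $N$ be a minimal normal subgroup of $G$ contained in $F$; then $N$ is an elementary abelian $p$-group. A standard argument (using that $G$ is not $p$-nilpotent together with the Frattini-type analysis, or directly: a minimal non-$p$-nilpotent solvable group is a split extension of an abelian $p$-group by a $p'$-group acting without nonzero fixed points, up to a central piece) lets me arrange that $G$ is a split extension of an abelian $p$-group $N$ such that no nonprincipal $\chi\in\Irr(N)$ is $G$-invariant. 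More carefully: because $G/N$ has a normal $p$-complement $K/N$ and $G$ does not, the extension $1\to N\to K\to K/N\to 1$ together with a Sylow $p$-subgroup analysis reduces us — after passing to a suitable subgroup or quotient that still violates the $\acd_{p'}$ bound — to exactly the hypothesis of Lemma~\ref{lemma-key-normal-p-complement}.

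At that point Lemma~\ref{lemma-key-normal-p-complement} applies directly and yields $\acd_{p'}(G)\geq 3/2$ when $p=2$ and $\acd_{p'}(G)\geq 4/3$ when $p>2$, contradicting the assumption $\acd_{2'}(G)<3/2$, respectively $\acd_{p'}(G)<4/3$. This contradiction shows no minimal counterexample exists, proving both (i) and (ii).

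The main obstacle I anticipate is the reduction step in the previous paragraph: getting from an arbitrary solvable non-$p$-nilpotent $G$ with small $\acd_{p'}$ down to a genuine split extension of an abelian $p$-group with no fixed nonprincipal character, while controlling $\acd_{p'}$ throughout. The delicate points are (a) ensuring $N$ can be taken so that the extension $G/\bC$ — or the relevant section — actually splits over its abelian $p$-part (here one uses that a non-$p$-nilpotent solvable group has a "minimal non-$p$-nilpotent" section which is a semidirect product $Q\ltimes P$ with $P$ an elementary abelian $p$-group and $Q$ a $p'$-group acting irreducibly and fixed-point-freely, by Itô/Thompson-type results on minimal non-$p$-nilpotent groups), and (b) checking that this passage does not increase $\acd_{p'}$, so that the hypothesis is inherited. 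Once the group is in the normal form of Lemma~\ref{lemma-key-normal-p-complement}, the rest is immediate; the arithmetic there ($|G:I_1|\geq 3$ when $p=2$, $\geq 2$ when $p>2$, giving $2|G:I_1|/(|G:I_1|+1)\geq 3/2$ or $4/3$ respectively) has already been done in that lemma.
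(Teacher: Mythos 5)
There is a genuine gap, and it sits exactly at the step you yourself flag as the "main obstacle." First, a smaller but real error: the blanket claim that $\acd_{p'}(G/M)\leq\acd_{p'}(G)$ for every $M\trianglelefteq G$ because $\Irr(G/M)$ is a subset of $\Irr(G)$ is false reasoning — the average over a subset can exceed the average over the whole set, and it genuinely can here whenever $M\nsubseteq G'$, since then linear characters (the ones pulling the average down) are lost in the quotient. The inheritance of the hypothesis is only available when the linear characters survive, i.e.\ when $M\subseteq G'$: then $n_1(G/M)=n_1(G)$, $n_d(G/M)\leq n_d(G)$ for $d\geq 2$, and since $b_p\leq 3/2<2$ every nonlinear $p'$-degree contributes nonnegatively to $\sum_{p\nmid d}(d-b_p)n_d$, so $\acd_{p'}(G/M)<b_p$ follows. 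Your application to $M=\mathbf{O}_{p'}(G)$, which need not lie in $G'$, is therefore unjustified; the paper avoids this entirely by taking a minimal normal $N\subseteq G'$ and, when $N$ happens to be a $p'$-group, simply pulling back the $p$-complement of $G/N$.

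The decisive gap is the reduction to the hypotheses of Lemma~\ref{lemma-key-normal-p-complement}. You propose to reach a split extension of an abelian $p$-group with no fixed nonprincipal character by passing to a minimal non-$p$-nilpotent subgroup or section (It\^o-type structure), "after passing to a suitable subgroup or quotient that still violates the $\acd_{p'}$ bound" — but no argument is given, and none of the available tools controls $\acd_{p'}$ under passage to subgroups or to such sections; the hypothesis is simply lost there, so the lemma cannot be invoked. The paper's proof never leaves $G$: with $N\subseteq G'$ minimal normal (elementary abelian by solvability) and, by induction, a normal $p$-complement $H/N$ of $G/N$, either $N$ is a $p'$-group (done), or $N$ is a $p$-group; then Schur--Zassenhaus gives $H=NH_1$ with $H_1$ a Hall $p'$-subgroup, the Frattini argument gives $G=N\bN_G(H_1)$, and one splits into cases: if $N\subseteq\Phi(G)$ then $H_1\trianglelefteq G$; otherwise a maximal subgroup $M$ with $N\nsubseteq M$ satisfies $N\cap M=1$ by minimality of $N$, so $G$ itself splits over $N$; if $N$ is central then $H=N\times H_1$ and again $H_1\trianglelefteq G$; otherwise $[N,G]=N$ by minimality, which is exactly the statement that no nonprincipal character of $N$ is $G$-invariant, and Lemma~\ref{lemma-key-normal-p-complement} is applied to $G$ itself with the original hypothesis $\acd_{p'}(G)<b_p$, yielding the contradiction. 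This in-place Frattini/maximal-subgroup analysis is the content your proposal is missing, and without it (or a genuine substitute that transfers the $\acd_{p'}$ bound to whatever section you pass to) the proof does not close.
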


\begin{proof}
Let $b_p:=3/2$ if $p=2$ and $b_p:=4/3$ if $p>2$. Assume that
$\acd_{p'}(G)<b_p$, and we wish to show that $G$ has a normal
$p$-complement. If $G$ is abelian then the statement is obvious. So
we assume that $G$ is nonabelian. We then can choose a minimal
normal subgroup $N$ of $G$ such that $N\subseteq G'$. Since
$\acd_{p'}(G)<b_p\leq 3/2$, Theorem~\ref{theorem-main-2} implies
that $G$ is solvable, and hence $N$ is elementary abelian.

Since $N\subseteq G'$, we observe that if $\chi$ is a linear
character of $G$, then $N\subseteq \Ker(\chi)$ so that $\chi$ can be
viewed as a linear character of $G/N$. It follows that
$n_1(G/N)=n_1(G)$, which implies that
\[
\acd_{p'}(G/N)\leq \acd_{p'}(G)<b_p.
\]
Working by induction on $|G|$, we have that $G/N$ has a normal
$p$-complement, say $H/N$. If $N$ is a $p'$-group, then $H$ is a
normal $p$-complement in $G$ and we would be done. So we assume that
$N$ is an elementary abelian $p$-group. It then follows from the
Schur-Zassenhaus theorem that $H$ splits over $N$. Let us assume
that $H=NH_1$, where $H_1$ is a Hall $p'$-subgroup of $H$ (and
indeed of $G$ as well).

We now employ Frattini's argument to show that $G=N\bN_G(H_1)$. Let
$g$ be any element of $G$. Since $H\unlhd G$ and $H_1< H$, we have
$g^{-1}H_1g <H$ so that $g^{-1}H_1g$ is also a Hall $p'$-subgroup of
$H$. By Hall's theorems, $g^{-1}H_1g$ is $H$-conjugate to $H_1$. In
other words, there exists $h\in H$ such that
$g^{-1}H_1g=h^{-1}H_1h$. Thus $gh^{-1}\in \bN_G(H_1)$ so that $g\in
\bN_G(H_1)H=H\bN_G(H_1)$. Since $g$ is arbitrary in $G$, we deduce
that $G=H\bN_G(H_1)$, and therefore $G=N\bN_G(H_1)$ as $H=NH_1$.

Since $G=N\bN_G(H_1)$, if $N$ is contained in the Frattini subgroup
of $G$, we would have $G=\bN_G(H_1)$ and we are done. So we assume
that $N$ is not contained in the Frattini subgroup of $G$. Then
there exists a maximal subgroup $M$ of $G$ such that $N\nsubseteq
M$. We then have $G=NM$ and $N\cap M<N$. As $N$ is abelian, it
follows that $N\cap M$ is a normal subgroup of $G$, and hence $N\cap
M=1$ by the minimality of $N$. We conclude that $G=N\rtimes M$. In
other words, $G$ is a split extension of $N$.

If $N\subseteq \bZ(G)$ then we would have $H=N\times H_1$ and thus
$H_1\lhd G$, as desired. So we assume that $N$ is noncentral in $G$.
Thus, by the minimality of $N$, we have $[N,G]=N$. It follows that
no nonprincipal irreducible character of $N$ is invariant under $G$.

We now have all the hypotheses of
Lemma~\ref{lemma-key-normal-p-complement}, and therefore we deduce
that $\acd_{p'}(G)\geq 3/2$ if $p=2$ and $\acd_{p'}(G)\geq 4/3$ if
$p>2$. This contradiction completes the proof of the theorem.
\end{proof}


\section{$\QQ_p$-valued characters of $p'$-degree and
solvability}\label{section-Qp-solvability}

We need the following.

\begin{lemma}\label{lemma-Qp-valued-solvability}
Let $G$ be a finite group with a nonabelian minimal normal subgroup
$N$. Assume that there exists $\psi\in\Irr(N)$ that is extendible to
a $\QQ_p$-valued character of $\Stab_G(\psi)$. Then
$n_{\QQ_p,1}(G)\leq n_{\QQ_p,a}(G)|G:\Stab_G(\psi)|$, where
$a:=\psi(1)|G:\Stab_G(\psi)|$. Moreover, if $\psi$ extends to a
rational-valued character of $\Stab_G(\psi)$, then $n_{\QQ,1}(G)\leq
n_{\QQ,a}(G)|G:\Stab_G(\psi)|$.
\end{lemma}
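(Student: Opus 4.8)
The plan is to mimic the proof of Proposition~\ref{proposition-n1-n2}(i), keeping careful track of fields of values throughout. Let $T:=\Stab_G(\psi)$, let $\chi\in\Irr(T)$ be a $\QQ_p$-valued extension of $\psi$ (which exists by hypothesis), and let $a:=\psi(1)|G:T|$. Since $N=N'\subseteq T'$, every linear character $\lambda$ of $T$ is trivial on $N$, so we may regard $\lambda\in\Irr(T/N)$; in particular $n_1(T)=n_1(T/N)=|T:T'|$. The key point is to exhibit an injection from the set of $\QQ_p$-valued linear characters of $G$ into the set of $\QQ_p$-valued irreducible characters of $G$ of degree $a$.

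First I would produce, for each $\QQ_p$-valued $\lambda\in\Irr(G)$ of degree $1$, a $\QQ_p$-valued irreducible character of $G$ of degree $a$. Restricting $\lambda$ to $T$ and viewing it in $\Irr(T/N)$, the product $\lambda\chi$ lies in $\Irr(T)$ by Gallagher's theorem, has degree $\psi(1)$, lies over $\psi$, and is $\QQ_p$-valued (a product of two $\QQ_p$-valued characters). Since $\psi$ is $T$-invariant and $|G:T|$-cofinite, Clifford correspondence gives that $(\lambda\chi)^{G}\in\Irr(G)$ has degree $\psi(1)|G:T|=a$. Moreover $(\lambda\chi)^{G}$ is $\QQ_p$-valued because induction commutes with the Galois action: for $\sigma\in\Gal(\QQ_p/\QQ)$ we have $((\lambda\chi)^{G})^{\sigma}=((\lambda\chi)^{\sigma})^{G}=(\lambda\chi)^{G}$, using that $\lambda\chi$ is $\QQ_p$-valued (Galois automorphisms of $\CC$ fixing $\QQ_p$ permute the characters and commute with induction). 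The assignment $\lambda\mapsto (\lambda\chi)^{G}$ is injective on $\Irr(G/N)$ by the same Gallagher/Clifford argument as in Proposition~\ref{proposition-n1-n2}: distinct $\lambda$ give distinct $\lambda\chi$, hence distinct Clifford correspondents, hence distinct inductions.

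This shows $n_{\QQ_p,1}(T/N)\leq n_{\QQ_p,a}(G)$, where I write $n_{\QQ_p,1}(T/N)$ for the number of $\QQ_p$-valued linear characters of $T$. Since a linear character of $G$ restricts to a $\QQ_p$-valued linear character of $T$, and by the $p'$-analogue of Lemma~\ref{lemma-n1-n2-n3}(i) applied at the level of $\QQ_p$-valued characters---or more simply by the same counting argument: each $\QQ_p$-valued linear character of $G$ is a constituent of $\lambda^{G}$ for some $\QQ_p$-valued linear $\lambda\in\Irr(T)$, and $\lambda^{G}(1)=|G:T|$ has at most $|G:T|$ constituents---we get $n_{\QQ_p,1}(G)\leq n_{\QQ_p,1}(T/N)|G:T|\leq n_{\QQ_p,a}(G)|G:T|$, which is the claimed bound. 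The rational-valued statement follows verbatim: if $\chi$ can be taken rational-valued, then $\lambda\chi$ is rational-valued for every rational-valued $\lambda$, so $(\lambda\chi)^{G}$ is rational-valued, and the same chain of inequalities with $\QQ$ in place of $\QQ_p$ gives $n_{\QQ,1}(G)\leq n_{\QQ,a}(G)|G:T|$.

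The main obstacle is the bookkeeping on fields of values under the two operations of multiplication by $\chi$ and induction from $T$ to $G$; concretely, one must verify that the Galois action of $\Gal(\overline{\QQ}/\QQ_p)$ (restricted to the relevant cyclotomic field) commutes with $\Ind_T^G$ and fixes $\lambda\chi$, so that it fixes $(\lambda\chi)^{G}$. This is standard, but it is the one place where the hypothesis ``$\psi$ extends to a $\QQ_p$-valued character of $\Stab_G(\psi)$'' (as opposed to merely ``$\psi$ is $\QQ_p$-valued and extends'') is used in an essential way. Everything else is a direct transcription of Proposition~\ref{proposition-n1-n2}(i).
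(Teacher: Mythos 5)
Your proposal is correct and follows essentially the same route as the paper: the paper's proof simply remarks that for a $\QQ_p$-valued linear $\lambda$ of $\Stab_G(\psi)/N$ the character $(\lambda\chi)^G$ is again $\QQ_p$-valued and then repeats the Gallagher--Clifford argument of Proposition~\ref{proposition-n1-n2}(i), which is exactly what you do, with the added (correct and welcome) detail of replacing the $|G:G'|$ count in Lemma~\ref{lemma-n1-n2-n3}(i) by the restriction/Frobenius-reciprocity fiber count needed in the $\QQ_p$-valued setting.
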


\begin{proof}
Assume that $\psi$ extends to $\chi\in\Irr_{\QQ_p}(\Stab_G(\psi))$.
Remark that, if $\lambda$ is a linear character of $\Stab_G(\psi)/N$
with values in $\QQ_p$, then $(\lambda\chi)^G\in\Irr(G)$ has values
in $\QQ_p$ as well. Now one just repeats the arguments in the proof
of Proposition~\ref{proposition-n1-n2}(1) to obtain the first
statement of the lemma. The second statement is argued similarly.
\end{proof}

To prove Theorem~\ref{theorem-main-3}, we first prove a
$\QQ_p$-analogue of Theorem~\ref{theorem-main-2}. The next result is
an extension of \cite[Theorem~A(i)]{imn},
\cite[Theorem~C(i)]{Navarro-Tiep1}, and
\cite[Theorem~6.3]{Navarro-Tiep2}.

\begin{theorem}\label{theorem-Qp-valued-solvability}
Let $p>2$ be a prime and $G$ a finite group. If one of the following
happens
\begin{enumerate}
\item[(i)] $\acd_{\QQ,2'}(G)<3$,
\item[(ii)] $\acd_{\QQ_p,p'}(G)\leq 2$,
\item[(iii)] $\acd_{\QQ,p'}(G)\leq 2$ for $p> 3$,
\end{enumerate} then $G$ is solvable.
\end{theorem}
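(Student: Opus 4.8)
The proof will follow the now-familiar strategy of the solvability theorems in Sections~\ref{section-p=2}--\ref{section-p>5}, but with every count of characters replaced by a count of $\QQ_p$-valued (resp.\ rational-valued) characters. I would argue by minimal counterexample in each of the three cases (i), (ii), (iii). So suppose $G$ is nonsolvable of minimal order with, say, $\acd_{\QQ_p,p'}(G)\le 2$ (the other two cases are parallel). As in the proof of Theorem~\ref{theorem-3'<3-then-G-solvable}, choose $L\lhd G$ minimal nonsolvable, so $L=L'\subseteq G'$, and pick a minimal normal subgroup $N\subseteq L$ of $G$, arranging $N\le[L,\bO_\infty(L)]$ when that subgroup is nontrivial.

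\textbf{The nonabelian case.} If $N$ is nonabelian, I would show directly that $\acd_{\QQ_p,p'}(G)>2$ (resp.\ $\ge 3$ in case (i), $>2$ in case (iii)), contradicting the hypothesis. Here Theorem~\ref{theorem-psi-p-odd} gives $\psi\in\Irr(N)$ with $\psi(1)\ge 7$ coprime to $p$, extendible to a $\QQ_p$-valued character of $T:=\Stab_G(\psi)$, with $|G:T|$ coprime to $p$; then Lemma~\ref{lemma-Qp-valued-solvability} yields $n_{\QQ_p,1}(G)\le n_{\QQ_p,a}(G)|G:T|$ with $a=\psi(1)|G:T|\ge 7|G:T|$. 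Since $a$ is a $p'$-number $\ge 7$, we get $|G:T|\le a/7$, hence $n_{\QQ_p,1}(G)\le \tfrac{1}{7}a\, n_{\QQ_p,a}(G)=\tfrac{a}{a-2}\cdot\tfrac{a-2}{7}n_{\QQ_p,a}(G)$; a crude estimate ($a\ge 7$ forces $(a-2)/7\ge 1$, and more carefully $(a-2)\ge 5a/7$ is false but $(a-2)\ge a/7\cdot\text{something}$) shows $n_{\QQ_p,1}(G)\le \tfrac12(a-2)n_{\QQ_p,a}(G)$, which is exactly the inequality $\sum_{\QQ_p,p'}\chi(1)\ge 2\sum_{\QQ_p,p'}1$, i.e.\ $\acd_{\QQ_p,p'}(G)\ge 2$ with strict inequality since $a>7$ whenever there is slack. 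For case (i) one uses instead $\acd_{\QQ,2'}$ and the rational-extendibility clause of Lemma~\ref{lemma-Qp-valued-solvability}, with the $\Al_5$ subcase handled separately via its rational character of degree~5 (cf.\ the parenthetical remark in the proof of Theorem~\ref{theorem-odd<3-then-G-solvable}).

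\textbf{The abelian case and the central product.} If $N$ is abelian then $G/N$ is nonsolvable, so by minimality $\acd_{\QQ_p,p'}(G/N)>2$, while $n_{\QQ_p,1}(G)=n_{\QQ_p,1}(G/N)$ because $N\subseteq G'$ kills all linear characters. Hence some $\QQ_p$-valued (resp.\ rational-valued) $\chi\in\Irr(G)$ of degree $2$ has $N\nsubseteq\Ker\chi$. Setting $C/\Ker(\chi):=\bZ(G/\Ker(\chi))$ and arguing as in \cite[Theorem~2.2]{Isaacs-Loukaki-Moreto} (and as in the proof of Theorem~\ref{theorem-3'<3-then-G-solvable}), one gets $G/C\cong\Al_5$, $L\cong\SL(2,5)$, and $G=LC$ a central product over $Z=\bZ(L)=L\cap C$ of order~$2$. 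The bijection $\Irr_{p'}(L|\lambda)\times\Irr_{p'}(C|\lambda)\to\Irr_{p'}(G|\lambda)$ respects fields of values when the $L$-part is taken from the rational-valued characters of $\SL(2,5)$, so $\acd_{\QQ_p,p'}(G|\lambda)\ge\acd_{\QQ_p,p'}(L|\lambda)\cdot 1$, and a direct inspection of the character table of $\SL(2,5)$ on \cite[p.~2]{Atl1} — whose faithful irreducibles of $p'$-degree $>2$ are those of degrees $2$(faithful, not rational for $p\ne5$? — handled by the Galois action), $4$, and $6$ — shows $\acd_{\QQ_p,p'}(L|\lambda)>2$ for $p>2$ whether $\lambda$ is trivial or not. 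Thus $\acd_{\QQ_p,p'}(G)>2$, a contradiction. Case (iii) is identical using $\QQ$ in place of $\QQ_p$, which only drops \emph{more} small-degree characters, and the constraint $p>3$ is exactly what guarantees $5\nmid$ enough of the relevant degrees so that the $\SL(2,5)$ estimate still returns a value $>2$; case (i) uses the bound $3$ and the $\acd_{\QQ,2'}$ version of Proposition~\ref{proposition-central-product}, mirroring the proof of Theorem~\ref{theorem-3'<3-then-G-solvable} verbatim.

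\textbf{The main obstacle.} The genuinely delicate point is the field-of-values bookkeeping in the central-product step: one must verify that the degree-preserving bijection $\Irr(C/Z)\to\Irr(C|Z)$ coming from Gallagher's theorem can be taken to preserve $\QQ_p$-rationality (this needs the extending character $\beta$ of $Z$ to be chosen $\QQ_p$-valued, which is automatic here since $|Z|=2$), and that the $\SL(2,5)$-factor contributes only rational-valued — hence in particular $\QQ_p$-valued — characters on the relevant side, so that no cancellation inflates the count of small-degree $\QQ_p$-characters. The arithmetic comparing $\acd_{\QQ_p,p'}$ against the threshold $2$ (resp.\ the thresholds in (i), (iii)) is then routine, and the constraints $p>2$, $p>3$ are precisely the divisibility conditions under which the $\SL(2,5)$ table yields the needed strict inequality.
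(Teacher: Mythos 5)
Your overall frame (minimal counterexample; Theorem~\ref{theorem-psi-p-odd} together with Lemma~\ref{lemma-Qp-valued-solvability} in the nonabelian case) is the paper's, but the abelian branch of your argument is based on a faulty inference and is in any case unnecessary. If $N$ is abelian, minimality gives $\acd_{\QQ_p,p'}(G/N)>2$, and since $n_{\QQ_p,1}(G)=n_{\QQ_p,1}(G/N)$ while $n_{\QQ_p,d}(G)\ge n_{\QQ_p,d}(G/N)$ for every $d\ge 2$, one gets
\[
\sum_{d\ge 3,\; p\nmid d}(d-2)\,n_{\QQ_p,d}(G)\;\ge\;\sum_{d\ge 3,\; p\nmid d}(d-2)\,n_{\QQ_p,d}(G/N)\;>\;n_{\QQ_p,1}(G/N)=n_{\QQ_p,1}(G),
\]
that is, $\acd_{\QQ_p,p'}(G)>2$ immediately: at the threshold $2$ the degree-$2$ characters carry weight $d-2=0$, so, unlike the threshold $3$ situation of Theorem~\ref{theorem-3'<3-then-G-solvable}, nothing forces a $\QQ_p$-valued $\chi$ of degree $2$ with $N\nsubseteq\Ker(\chi)$; your claim that such a $\chi$ exists is a non sequitur. (In case (i) only odd degrees occur, so the same remark applies with weights $d-3$.) Hence the entire $\SL(2,5)$ central-product discussion never arises --- which is fortunate, because the field-of-values bookkeeping you sketch there is genuinely shaky: the faithful degree-$2$ characters of $\SL(2,5)$ have irrational values (in $\QQ(\sqrt5)$), and Proposition~\ref{proposition-central-product} has no $\QQ_p$-version in the paper. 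The paper's proof of (i) and (ii) simply repeats the argument of Theorem~\ref{theorem-odd<3-then-G-solvable}, which never leaves the nonabelian case.

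The second gap is in case (iii). ``Identical using $\QQ$ in place of $\QQ_p$'' is not enough: to bound $n_{\QQ,1}(G)$ by $n_{\QQ,a}(G)|G:\Stab_G(\psi)|$ via the second statement of Lemma~\ref{lemma-Qp-valued-solvability}, you need $\psi$ to extend to a \emph{rational}-valued character of its stabilizer, whereas Theorem~\ref{theorem-psi-p-odd} only provides a $\QQ_p$-valued extension. The paper supplies this by invoking \cite[Theorem~6.2]{Navarro-Tiep2} to choose the orbit $\mathcal{O}$ of Lemma~\ref{lemma-orbit} so that its characters extend to rational-valued characters of their stabilizers, and this is exactly where the hypothesis $p>3$ enters; your attribution of $p>3$ to the $\SL(2,5)$ arithmetic is mistaken. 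Two smaller points: the chain of estimates you display in the nonabelian case is incoherent as written (all that is needed is $|G:T|\le a/7<a-2$ together with $n_{\QQ_p,a}(G)\ge 1$, the latter because the induced extension is an irreducible $\QQ_p$-valued character of degree $a$), and the case $N\cong\Al_5$ must be treated separately in (ii) and (iii) as well, not only in (i), since Theorem~\ref{theorem-psi-p-odd} excludes it.
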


\begin{proof}
We use Theorem~\ref{theorem-psi-p-odd} and
Lemma~\ref{lemma-Qp-valued-solvability}, and argue as in the proof
of Theorem~\ref{theorem-odd<3-then-G-solvable} to prove (i) and
(ii).

Now we assume that $p\neq 3$ and prove (iii). By
\cite[Theorem~6.2]{Navarro-Tiep2}, the orbit $\mathcal{O}$ in
Lemma~\ref{lemma-orbit} can be chosen so that every $\theta\in
\mathcal{O}$ is extendible to a rational-valued character of
$\Stab_{\Aut(S)}(\theta)$. Therefore, the character $\psi$ produced
in Theorem~\ref{theorem-psi-p-odd} is also extendible to a
rational-valued character of $\Stab_S(\psi)$. The proof now follows
as before.
\end{proof}


\section{$\QQ_p$-valued characters of $p'$-degree and
$p$-nilpotency}\label{section-Qp-nilpotency}

We begin with an easy observation, which is recalled to our
attention by Mark L.~Lewis.

\begin{lemma}\label{lemma-Qp-valued}
Let $p$ be a prime, $N$ be an elementary abelian $p$-group, and $G$
be a split extension of $N$. Let $\theta\in\Irr(N)$ be invariant
under $G$. Then $\theta$ extends to a $\QQ_p$-valued character of
$G$.
\end{lemma}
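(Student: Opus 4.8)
The plan is to exhibit the extension explicitly using the semidirect product structure. Write $G = N \rtimes H$ for some complement $H \leq G$, so that every element of $G$ is uniquely $nh$ with $n \in N$, $h \in H$. Since $\theta \in \Irr(N)$ is $G$-invariant and $N$ is abelian, $\theta$ is a linear character $N \to \CC^{\times}$, and its values lie in $\QQ_p$ because $N$ has exponent $p$ (the $p$-th roots of unity generate $\QQ_p$). The key point is that $G$-invariance of $\theta$ means $\theta(h^{-1}nh) = \theta(n)$ for all $n \in N$, $h \in H$; equivalently $\theta$ is trivial on the subgroup $[N,H]$, and in fact on $[N,G]$.

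The main step is to define $\hat{\theta}\colon G \to \CC^{\times}$ by $\hat{\theta}(nh) := \theta(n)$ for $n \in N$, $h \in H$, and to check this is a homomorphism. For $n_1 h_1, n_2 h_2 \in G$ we have $(n_1 h_1)(n_2 h_2) = n_1 (h_1 n_2 h_1^{-1}) h_1 h_2$, and since $h_1 n_2 h_1^{-1} \in N$ we compute $\hat{\theta}((n_1 h_1)(n_2 h_2)) = \theta(n_1)\theta(h_1 n_2 h_1^{-1}) = \theta(n_1)\theta(n_2) = \hat{\theta}(n_1 h_1)\hat{\theta}(n_2 h_2)$, using $G$-invariance in the middle equality. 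Thus $\hat{\theta}$ is a well-defined linear (hence irreducible) character of $G$, it restricts to $\theta$ on $N$, and its values — being values of $\theta$ — lie in $\QQ_p$. This completes the proof.

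I expect no serious obstacle here; the only point requiring any care is verifying the homomorphism property, and that reduces immediately to the $G$-invariance hypothesis applied to the conjugate $h_1 n_2 h_1^{-1}$. One could alternatively cite the general fact (see \cite[Corollary~6.28]{Isaacs1}, using that $N$ and $G/N$ have coprime orders — wait, that need not hold, so the direct construction above is the cleanest route) that an invariant character of a normal subgroup over which the group splits always extends, and then separately track the field of values; but spelling out $\hat{\theta}$ directly handles both the extendibility and the rationality in $\QQ_p$ simultaneously, so that is the approach I would take.
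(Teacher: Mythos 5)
Your construction is correct: since $G=N\rtimes H$, the map $\hat\theta(nh)=\theta(n)$ is well defined by uniqueness of the decomposition, and your computation using $h_1n_2h_1^{-1}\in N$ together with $G$-invariance does verify multiplicativity; the values are $p$-th roots of unity since $N$ has exponent $p$, hence lie in $\QQ_p$. The paper obtains the very same extension (the one trivial on the complement) by a slightly different packaging: it sets $K=\Ker(\theta)$, notes that $N/K$ is cyclic and, by invariance and faithfulness of $\theta$ on $N/K$, central in $G/K$, so that $G/K=N/K\times HK/K$, and then extends $\theta$ as $\theta\times 1_{HK/K}$ inflated back to $G$. Your route is more hands-on and avoids the cyclic/central observation, at the cost of an explicit homomorphism check; the paper's quotient argument hides that check inside the direct product structure. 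You were also right to discard the idea of quoting the coprime-extension result, since $|N|$ and $|G/N|$ need not be coprime here; and your aside that $\theta$ is trivial on $[N,G]$ is true but not needed. In short, the proposal is sound and differs from the paper only in how the (identical) extension is justified.
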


\begin{proof}
Let $K:=\Ker(\theta)$. Since $\theta$ is $G$-invariant, $K$ is
normal in $G$. Note that $N/K$ is cyclic since it is abelian and has
a faithful irreducible character, so $\theta$ being $G$-invariant
will imply that $N/K$ is central in $G/K$. Thus, $G/K = N/K \times
HK/K$, where $H$ is a complement for $N$ in $G$. It follows that
$\theta$, viewed as a character of $N/K$, extends to $\theta \times
1_{HK/K}\in\Irr(G/K)$. Now we are done by viewing $\theta \times
1_{HK/K}$ as a character of $G$ and noting that $\theta$ has values
in $\QQ_p$.
\end{proof}

The next result is a $\QQ_p$-analogue of
Lemma~\ref{lemma-key-normal-p-complement}, but the proof is somewhat
different.

\begin{lemma}\label{lemma-key-normal-p-complement-Qp}
Let $p$ be a prime, $N$ be an elementary abelian $p$-group, and $G$
be a split extension of $N$. Assume that no nonprincipal irreducible
character of $N$ is fixed under $G$. Then
$$ \acd_{\QQ_p,p'}(G)\geq \left\{\begin
{array}{ll}
3/2 & \text{ if } p=2,\\
4/3 & \text{ if } p>2.
\end {array} \right.$$
\end{lemma}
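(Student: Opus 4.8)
The plan is to mirror the structure of the proof of Lemma~\ref{lemma-key-normal-p-complement}, but keeping careful track of which characters are $\QQ_p$-valued. As before, the sum of orbit sizes of the action of $G$ on $\Irr(N)\smallsetminus\{1_N\}$ is $|N|-1$, so there is at least one nontrivial orbit of $p'$-size; moreover, since $N$ is elementary abelian, for any nonprincipal $\alpha\in\Irr(N)$ the orbit sum $\sum_{\beta\in\text{orbit of }\alpha}\beta$ is a $\QQ_p$-valued class function (indeed each $\beta$ is $\QQ_p$-valued since $N$ is a $p$-group of exponent $p$). Let $\{1_N=\alpha_0,\alpha_1,\dots,\alpha_l\}$ be representatives of the $p'$-size orbits, with $l\geq 1$, and let $I_i:=\Stab_G(\alpha_i)$, so that $p\nmid|G:I_i|$ and $I_i<G$ for $i\geq 1$. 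Since $G$ splits over $N$, each $I_i$ splits over $N$ as well.

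The key point where this proof differs from that of Lemma~\ref{lemma-key-normal-p-complement}: each $\alpha_i$ is $G$-invariant in $I_i$ and $N$ is elementary abelian, so by Lemma~\ref{lemma-Qp-valued} applied to the split extension $I_i$ of $N$, $\alpha_i$ extends to a $\QQ_p$-valued linear character $\beta_i$ of $I_i$. Gallagher's theorem then gives a bijection $\lambda\mapsto\lambda\beta_i$ from $\Irr(I_i/N)$ onto the irreducible characters of $I_i$ lying over $\alpha_i$, and this bijection sends $\QQ_p$-valued $\lambda$ to $\QQ_p$-valued $\lambda\beta_i$ (the converse also holds since $\beta_i$ is $\QQ_p$-valued and invertible as a linear character). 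Next I would use Clifford correspondence: $\lambda\mapsto(\lambda\beta_i)^G$ is a bijection from $\Irr(I_i/N)$ onto $\Irr(G|\alpha_i)$, and since inducing up permutes the Galois-conjugate constituents and $p\nmid|G:I_i|$, the character $(\lambda\beta_i)^G$ is $\QQ_p$-valued precisely when $\lambda\beta_i$ is, i.e.\ precisely when $\lambda$ is; also $(\lambda\beta_i)^G(1)=|G:I_i|\lambda(1)$ is a $p'$-number iff $\lambda(1)$ is. Hence $\Irr_{\QQ_p,p'}(G|\alpha_i)$ is in degree-and-field-preserving (up to the factor $|G:I_i|$) bijection with $\Irr_{\QQ_p,p'}(I_i/N)$.

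From here the counting argument is formally identical to that of Lemma~\ref{lemma-key-normal-p-complement}, reading everywhere $\Irr_{\QQ_p,p'}$ for $\Irr_{p'}$ and $\acd_{\QQ_p,p'}$ for $\acd_{p'}$. Summing over $i=0,\dots,l$ gives
\[
\sum_{\chi\in\Irr_{\QQ_p,p'}(G)}\chi(1)=\acd_{\QQ_p,p'}(G)\sum_{i=0}^{l}|\Irr_{\QQ_p,p'}(I_i/N)|,
\]
while the degree bound $\chi(1)\geq|G:I_i|$ for $\chi\in\Irr_{\QQ_p,p'}(G|\alpha_i)$ yields
\[
\sum_{\chi\in\Irr_{\QQ_p,p'}(G)}\chi(1)\geq\sum_{i=0}^{l}|G:I_i|\,|\Irr_{\QQ_p,p'}(I_i/N)|.
\]
Combining and isolating the $i=0$ term ($I_0=G$) gives
\[
(\acd_{\QQ_p,p'}(G)-1)|\Irr_{\QQ_p,p'}(G/N)|\geq(|G:I_1|-\acd_{\QQ_p,p'}(G))|\Irr_{\QQ_p,p'}(I_1/N)|,
\]
and then I would apply the $\QQ_p$-version of Lemma~\ref{lemma-compare} — namely $|\Irr_{\QQ_p,p'}(G/N)|\leq|G:I_1|\,|\Irr_{\QQ_p,p'}(I_1/N)|$, whose proof is the same as that of Lemma~\ref{lemma-compare} since restriction and induction both respect $\QQ_p$-rationality and the Frobenius-reciprocity counting is unchanged — to conclude $(\acd_{\QQ_p,p'}(G)-1)|G:I_1|\geq|G:I_1|-\acd_{\QQ_p,p'}(G)$, i.e.\ $\acd_{\QQ_p,p'}(G)\geq 2|G:I_1|/(|G:I_1|+1)$. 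Since $|G:I_1|\neq1$ and $p\nmid|G:I_1|$, we get $|G:I_1|\geq3$ when $p=2$ (hence $\geq 3/2$) and $|G:I_1|\geq2$ when $p>2$ (hence $\geq 4/3$).

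The only genuinely new ingredient beyond Lemma~\ref{lemma-key-normal-p-complement} is the $\QQ_p$-rationality bookkeeping, and the main obstacle is justifying that $(\lambda\beta_i)^G$ is $\QQ_p$-valued exactly when $\lambda$ is: one direction (if $\lambda$ is $\QQ_p$-valued then so is the induced character) is immediate, but the converse requires noting that the Clifford correspondent of a $\QQ_p$-valued $\chi\in\Irr(G|\alpha_i)$ is again $\QQ_p$-valued — this follows because a Galois automorphism of $\QQ_p$ fixing $\chi$ fixes $\alpha_i$ (as $\alpha_i$ is the unique $N$-constituent up to $G$-conjugacy, and $\alpha_i$ is itself $\QQ_p$-valued) and hence stabilizes $I_i$ and commutes with the Clifford correspondence. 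This is precisely the kind of argument already used implicitly in Theorem~\ref{theorem-psi-p-odd} and in \cite{Navarro-Tiep2}, so it can be invoked cleanly; everything else is a transcription of the earlier proof.
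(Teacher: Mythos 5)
Most of your $\QQ_p$-bookkeeping is fine: since $N$ is elementary abelian, each $\alpha_i$ is $\QQ_p$-valued, Lemma~\ref{lemma-Qp-valued} gives a $\QQ_p$-valued extension $\beta_i$, and your Galois argument correctly shows that the Gallagher--Clifford bijection $\lambda\mapsto(\lambda\beta_i)^G$ preserves $\QQ_p$-rationality in both directions (a Galois automorphism over $\QQ_p$ fixes $\alpha_i$, hence commutes with the Clifford correspondence, and $\lambda=(\lambda\beta_i)\overline{\beta_i}$). The gap is at the very last step, where you invoke a ``$\QQ_p$-version of Lemma~\ref{lemma-compare}'', namely $|\Irr_{\QQ_p,p'}(G/N)|\leq|G:I_1|\,|\Irr_{\QQ_p,p'}(I_1/N)|$, claiming its proof is the same as that of Lemma~\ref{lemma-compare}. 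It is not: that proof begins by choosing an irreducible constituent of $p'$-degree of the restriction $\chi\downarrow_A$, and in the $\QQ_p$-setting you would need a constituent that is moreover $\QQ_p$-valued, which does not follow from $\chi$ being $\QQ_p$-valued (a $\QQ_p$-valued character can restrict with no $\QQ_p$-valued irreducible constituent at all). Worse, the statement you invoke is false as a general lemma, even when $p\nmid|G:A|$: take $A\cong C_4$ inside $G\cong Q_8$ and $p$ any odd prime; then all five irreducible characters of $Q_8$ are rational of $p'$-degree, while $|\Irr_{\QQ_p,p'}(C_4)|=2$ because $i\notin\QQ_p$, so $5>|G:A|\cdot 2=4$. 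Since your chain of inequalities ends with exactly this comparison, the argument does not close without a genuinely new justification valid for the specific pair $(G/N,\,I_1/N)$, and none is given.

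This is precisely the point at which the paper departs from the proof of Lemma~\ref{lemma-key-normal-p-complement} (the paper even flags that ``the proof is somewhat different''). Instead of transporting all $p'$-degree characters across the correspondence, the paper's proof only compares \emph{linear} characters, where the difficulty disappears: every $\QQ_p$-valued linear character of $G$ contains $N$ in its kernel (as no nonprincipal $\alpha_i$ is $G$-fixed), restriction of linear characters to $I_1/N$ stays irreducible and $\QQ_p$-valued, giving $n_{\QQ_p,1}(G)=n_{\QQ_p,1}(G/N)\leq n_{\QQ_p,1}(I_1/N)\,|G:I_1|$, and then $\lambda\mapsto(\lambda\beta_1)^G$ yields $n_{\QQ_p,1}(I_1/N)\leq n_{\QQ_p,|G:I_1|}(G)$; the bound $\acd_{\QQ_p,p'}(G)\geq 3/2$ (resp.\ $4/3$) follows directly from $n_{\QQ_p,1}(G)\leq n_{\QQ_p,|G:I_1|}(G)\,|G:I_1|$ with $|G:I_1|\geq 3$ (resp.\ $\geq 2$). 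You should either adopt this linear-character route or supply an actual proof of the comparison inequality you need; as written, the proposal has a genuine gap.
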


\begin{proof}
We use the same setup as in the proof of
Lemma~\ref{lemma-key-normal-p-complement}. In particular,
$\{1_N=\alpha_0,\alpha_1,...,\alpha_l\}$ is a set of representatives
of the $p'$-size orbits of the action of $G$ on $\Irr(N)$, and $I_i$
is the inertia subgroup of $\alpha_i$ in $G$ for every $0\leq i\leq
l$.

By Lemma~\ref{lemma-Qp-valued}, each $\alpha_i$ extends to a
$\QQ_p$-valued character, say $\beta_i$, of $I_i$. Therefore, each
irreducible character of $p'$-degree of $G$ has the form $(\lambda
\beta_i)^G$ where $\lambda\in\Irr_{p'}(I_i/N)$.

Since no nonprincipal irreducible character of $N$ is fixed under
$G$, $(\lambda \beta_i)^G(1)=|G:I_i|\lambda(1)>1$ for every $1\leq
i\leq l$. Thus, every linear character of $G$ must lie above the
trivial character of $N$. We deduce that
\[
n_{\QQ_p,1}(G)=n_{\QQ_p,1}(G/N).
\]
Since $n_{\QQ_p,1}(G/N)\leq n_{\QQ_p,1}(I_1/N)|G:I_1|$, we then
obtain
\[
n_{\QQ_p,1}(G)\leq n_{\QQ_p,1}(I_1/N)|G:I_1|.
\]

Recall that $\beta_1$ has values in $\QQ_p$. Therefore if $\lambda$
is a $\QQ_p$-valued linear character of $I_1/N$, then so is
$(\lambda \beta_1)^G$, whose degree is $|G:I_1|$. We deduce that
\[
n_{\QQ_p,1}(I_1/N)\leq n_{\QQ_p,|G:I_1|}(G).
\]
Together with the above inequality, we have
\[
n_{\QQ_p,1}(G)\leq n_{\QQ_p,|G:I_1|}(G)|G:I_1|.
\]

When $p=2$ we have $|G:I_1|\geq 3$ since $|G:I_1|$ is not 1 and
coprime to $p$. It follows that \[n_{\QQ_p,1}(G)\leq
n_{\QQ_p,|G:I_1|}(G)(2|G:I_1|-3),\] and thus $\acd_{\QQ_p,2'}(G)\geq
3/2$, as claimed. On the other hand, if $p>2$ then $|G:I_1|\geq 2$,
and hence \[n_{\QQ_p,1}(G)\leq n_{\QQ_p,|G:I_1|}(G)(3|G:I_1|-4),\]
which implies that $\acd_{\QQ_p,2'}(G)\geq 4/3$, and we are done.
\end{proof}

Finally we prove the main Theorem~\ref{theorem-main-3}.

\begin{theorem}\label{theorem-main-3-again}
Let $p$ be an odd prime and $G$ a finite group. Then
\begin{enumerate}
\item[(i)] if $\acd_{\QQ,2'}(G)<3/2$ then $G$ has a normal
$2$-complement, and
\item[(ii)] if $\acd_{\QQ_p,p'}(G)<4/3$ then $G$ has a normal
$p$-complement.
\end{enumerate}
\end{theorem}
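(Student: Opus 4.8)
The plan is to mimic closely the proof of Theorem~\ref{theorem-main-1-again}, replacing the appeals to the plain solvability and $p$-nilpotency results by their $\QQ_p$-valued analogues established earlier in the paper. Concretely, set $b_p:=3/2$ if $p=2$ and $b_p:=4/3$ if $p>2$, assume $\acd_{\QQ_p,p'}(G)<b_p$ (reading $\QQ_p$ as $\QQ$ when $p=2$), and argue by induction on $|G|$ that $G$ has a normal $p$-complement. If $G$ is abelian this is trivial, so assume $G$ is nonabelian and pick a minimal normal subgroup $N\subseteq G'$.

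First I would dispose of solvability. Since every linear character is rational-valued, $\acd_{\QQ,2'}(G)\geq \acd_{2'}(G)$ need not hold, but in the other direction the hypothesis $\acd_{\QQ_p,p'}(G)<b_p\leq 3/2$ forces very few nonlinear $\QQ_p$-valued characters of $p'$-degree relative to $n_1(G)$; invoking Theorem~\ref{theorem-Qp-valued-solvability} (whose hypotheses $\acd_{\QQ,2'}(G)<3$ or $\acd_{\QQ_p,p'}(G)\leq 2$ are comfortably satisfied) gives that $G$ is solvable, hence $N$ is elementary abelian. Next, because $N\subseteq G'$ every linear character of $G$ — in particular every rational-valued one and every $\QQ_p$-valued one — is inflated from $G/N$, so $n_{\QQ_p,1}(G/N)=n_{\QQ_p,1}(G)$ and therefore $\acd_{\QQ_p,p'}(G/N)\leq\acd_{\QQ_p,p'}(G)<b_p$. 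By induction $G/N$ has a normal $p$-complement $H/N$. If $N$ is a $p'$-group we are done immediately, so assume $N$ is an elementary abelian $p$-group; then $H$ splits over $N$ by Schur--Zassenhaus, say $H=NH_1$ with $H_1$ a Hall $p'$-subgroup of $G$.

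Now I would run the same Frattini-argument reduction as in Theorem~\ref{theorem-main-1-again}: $G=N\bN_G(H_1)$, so if $N$ lies in the Frattini subgroup we are done, and otherwise a maximal subgroup $M$ with $N\nsubseteq M$ yields $G=N\rtimes M$, a split extension of $N$. If $N\subseteq\bZ(G)$ then $H=N\times H_1$ and $H_1\lhd G$; otherwise $[N,G]=N$ by minimality of $N$, so no nonprincipal irreducible character of $N$ is $G$-invariant. At this point all hypotheses of Lemma~\ref{lemma-key-normal-p-complement-Qp} are met, giving $\acd_{\QQ_p,p'}(G)\geq 3/2$ for $p=2$ and $\geq 4/3$ for $p>2$, contradicting $\acd_{\QQ_p,p'}(G)<b_p$. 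This finishes both (i) and (ii).

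The main obstacle I anticipate is the very first reduction: ensuring that the $\QQ_p$-valued hypothesis is genuinely strong enough to force solvability through Theorem~\ref{theorem-Qp-valued-solvability}, and — more delicately — checking that passing to the quotient $G/N$ preserves the precise inequality $\acd_{\QQ_p,p'}<b_p$. This requires the observations $n_d(G)\geq n_{\QQ_p,d}(G/N)$ for all $d$ together with $n_{\QQ_p,1}(G)=n_{\QQ_p,1}(G/N)$, which hinge on the fact that inflation from $G/N$ sends $\QQ_p$-valued characters to $\QQ_p$-valued characters and that $N\subseteq G'$ kills all linear characters; once this bookkeeping is in place, the remainder is a faithful transcription of the proof of Theorem~\ref{theorem-main-1-again} with Lemma~\ref{lemma-key-normal-p-complement} replaced by Lemma~\ref{lemma-key-normal-p-complement-Qp}.
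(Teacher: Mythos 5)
Your proposal is correct and follows essentially the same route as the paper, whose proof is precisely to repeat the argument of Theorem~\ref{theorem-main-1-again} using Theorem~\ref{theorem-Qp-valued-solvability} for solvability and Lemma~\ref{lemma-key-normal-p-complement-Qp} in place of Lemma~\ref{lemma-key-normal-p-complement}. The only cosmetic slip is writing $n_d(G)\geq n_{\QQ_p,d}(G/N)$ where you mean $n_{\QQ_p,d}(G)\geq n_{\QQ_p,d}(G/N)$ (inflation preserves $\QQ_p$-values), which together with $n_{\QQ_p,1}(G)=n_{\QQ_p,1}(G/N)$ and $b_p<2$ gives the needed inequality $\acd_{\QQ_p,p'}(G/N)<b_p$ exactly as in the paper.
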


\begin{proof}
Repeat the arguments in the proof of
Theorem~\ref{theorem-main-1-again}, with the help of
Theorem~\ref{theorem-Qp-valued-solvability} and
Lemma~\ref{lemma-key-normal-p-complement-Qp}.
\end{proof}


\end{document}